%% file: main.tex
\documentclass[reqno,a4paper,12pt]{amsart}

\usepackage{styles/mystyle}
\input{styles/commands.tex}
\input{styles/theoremstyles.tex}

\input{styles/title.tex}

\begin{document}
    \input{chapters/abstract.tex}
    
    \maketitle

    \input{chapters/introduction.tex}

    \input{chapters/dyadicCubes.tex}

    \input{chapters/algorithm.tex}

    \input{chapters/tools.tex}

    \input{chapters/manifolds.tex}

    \input{chapters/questions.tex}

    \bibliographystyle{amsplain}
    \nocite{*}
    \bibliography{bibliography/references}
\end{document}

%% file: styles/commands.tex
\newcommand{\N}{\mathbb{N}}

\newcommand{\R}{\mathbb{R}}
\newcommand{\HR}{\mathbb{H}}

\newcommand{\T}{\mathbb{T}}

\newcommand{\B}{\mathbb{B}}
\newcommand{\Sp}{\mathbb{S}}
\newcommand{\Rd}{\R^d}

\newcommand{\conv}{\operatorname{conv}}
\newcommand{\diam}{\operatorname{diam}}
\newcommand{\len}{\operatorname{len}}
\newcommand{\Int}{\operatorname{Int}}
\newcommand{\dist}{\operatorname{dist}}

\newcommand{\cubedec}{cube-\hspace{0pt}decomposable\xspace}
\newcommand{\Cubedec}{Cube-\hspace{0pt}decomposable\xspace}

%% file: styles/theoremstyles.tex
\theoremstyle{plain}

\newtheorem{theorem}{Theorem}[section]
\newtheorem{proposition}[theorem]{Proposition}
\newtheorem{lemma}[theorem]{Lemma}
\newtheorem{corollary}[theorem]{Corollary}

\newtheorem*{claim*}{Claim}

\newtheoremstyle{definitionstyle}%
    {}{}%
    {\normalfont}%
    {}%
    {\itshape}{.}%
    { }%
    {}

\theoremstyle{definitionstyle}

\newtheorem{definition}[theorem]{Definition}
\newtheorem*{definition*}{Definition}

\theoremstyle{definition}
\newtheorem{definitionbold}[theorem]{Definition}

\theoremstyle{remark}

\newtheorem*{remarks*}{Remarks}
\newtheorem*{remark}{Remark}

\crefname{theorem}{Theorem}{Theorems}
\crefname{proposition}{Proposition}{Propositions}
\crefname{lemma}{Lemma}{Lemmas}
\crefname{corollary}{Corollary}{Corollaries}
\crefname{definition}{Definition}{Definitions}
\crefname{example}{Example}{Examples}

\crefname{definitionbold}{Definition}{Definitions}

\crefname{section}{Section}{Sections}

\AtBeginEnvironment{proposition}{\crefalias{theorem}{proposition}}
\AtBeginEnvironment{lemma}{\crefalias{theorem}{lemma}}
\AtBeginEnvironment{corollary}{\crefalias{theorem}{corollary}}
\AtBeginEnvironment{definition}{\crefalias{theorem}{definition}}
\AtBeginEnvironment{definitionbold}{\crefalias{theorem}{definition}}
\AtBeginEnvironment{example}{\crefalias{theorem}{example}}

%% file: styles/title.tex
\title{Optimal-diameter partitions into regions of prescribed measure}

\author{Grigory Voinov}

%% file: chapters/abstract.tex
\subjclass[2020]{Primary 30L05; Secondary 28A75, 52C22, 57Q15}
	\keywords
    {
        partition,
        metric measure space,
        Ahlfors--David regular,
        dyadic cubes,
        \(C^1\) manifolds.
    }

\begin{abstract}
    In this paper we find a sufficient condition on
    an Ahlfors--David regular metric
    measure space under which it admits a partition into parts of
    prescribed measures and
    optimal (up to a constant) diameters.
    The proof uses the construction of dyadic cubes.
    The process is algorithmic: the pieces are cut
    out one by one via a filling procedure on the tree of
    dyadic cubes.
    We introduce the notion of spaces which admit a
    connected dyadic cube decomposition and prove that they
    admit a partition of the kind described above.
    We then develop several techniques to obtain such
    spaces and show some natural examples of this type.
\end{abstract}

%% file: chapters/introduction.tex
\section{Introduction} \label{sec: introduction}

    The problem of partitioning a space into regions of prescribed measure
    and small diameter was first considered by K. Stolarsky,
    who in \cite{Sto73} observed
    that for every natural number \(N\) the unit sphere \(\Sp^d\)
    admits a partition into \(N\) regions of equal volume and diameter at
    most \(c_d N^{-1/d}\) for some constant \(c_d > 0\) independent of \(N\).
    An explicit recursive construction of such an equal-area partition of
    the sphere with small diameter was subsequently given by P. Leopardi
    \cite{Leo06}.
    G. Gigante and P. Leopardi in \cite{GL17} proved the same result for
    arbitrary connected Ahlfors--David regular metric measure spaces.

    Separately, M. Skriganov \cite{Skr17} established a constructive proof
    of the existence of equal-measure partitions with small average diameter
    for compact \(d\)-rectifiable metric spaces.
    More recently, J. Kitagawa and A. Takatsu \cite{KT25} gave an alternative
    proof of the existence of equal-measure partitions of the sphere with
    diameter bounds, using optimal transport methods.

    \medskip

    The idea of considering spanning trees of Voronoi cells
    in the context of partitioning a sphere
    was suggested by U. Feige and G. Schechtman in [\cite{FS02}, Lemma 21].
    Later, in \cite{GL17}
    G. Gigante and P. Leopardi applied this technique to dyadic cubes.

    \smallskip

    The construction of
    dyadic cubes was developed by G. David and S. Semmes in
    \cite{Dav88, DS97}
    for Ahlfors--David regular subsets of Euclidean spaces.
    The Assouad embedding theorem \cite{Ass83} shows that David's
    approach extends to arbitrary Ahlfors--David regular metric
    measure spaces.
    M. Christ \cite{Chr90} subsequently gave a direct
    construction of dyadic cubes for the more general setting of
    spaces of homogeneous type.
    This construction was later refined for doubling quasimetric spaces
    by T. Hyt\"onen and A. Kairema \cite{HK12}.

    \smallskip

    In this article we partially generalise the result of \cite{GL17} from a
    partition into \(n\) parts of equal measure to a partition into parts
    of arbitrarily prescribed measures \(\lambda_1, \dots, \lambda_n\)
    for sufficiently regular spaces, with diameter control.

    \begin{definitionbold} \label{def: weightedDecomp}
        Suppose that \((X, \rho)\) is a metric space
        with a Borel measure \(\mu\), such that
        \(0 < \mu(X) < \infty\).
        We say that \(X\) admits a 
        \textbf{weighted decomposition}
        if there exist constants \(d, C^* > 0\),
        depending only on \((X, \rho, \mu)\), such
        that the following holds.
        For all positive numbers
        \(\lambda_1, \dots, \lambda_n\) with
        \(\sum_{j=1}^n \lambda_j = \mu(X)\), there exist measurable subsets
        \(X_1, \dots, X_n \subset X\)
        such that \(\bigsqcup_{j=1}^n X_j= X, \, \mu(X_j)= \lambda_j,\)
        and each \(X_j\) is contained in a ball of radius
        \(C^* \lambda_j^{\frac{1}{d}}\).
        Such a constant \(d\) is called an \textbf{order} of
        the decomposition.
    \end{definitionbold}

    The main result of the paper may be formulated
    as follows.

    \begin{theorem} \label{thm: partitionConnectedIntro}
        Let \((X, \rho)\) be a connected metric space, which
        admits a connected dyadic cube decomposition
        (see \cref{def: connectedCubes}).
        Suppose that \(\mu\) is a finite positive Borel measure on \(X\)
        that is Ahlfors--David regular of dimension \(d\).
        Then, \(X\) admits a weighted decomposition 
        of order \(d\).
    \end{theorem}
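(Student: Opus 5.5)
The plan is to build the pieces one at a time by a greedy filling procedure on the tree of dyadic cubes from the connected dyadic cube decomposition (\cref{def: connectedCubes}). We use the standard dyadic cube properties. There is a parameter \(\delta\in(0,1)\) such that every cube \(Q\) of generation \(k\) satisfies
\[
B(z_Q, c_1\delta^k)\subset Q\subset B(z_Q, C_1\delta^k).
\]
Cubes of a fixed generation partition \(X\) up to a null set. Combined with Ahlfors--David regularity of dimension \(d\), this gives \(\mu(Q)\asymp \delta^{kd}\). Connectedness of the decomposition supplies, for each cube, a connected adjacency structure on its children. Using it, I will order the children of every cube so that every initial and every terminal segment of the ordering is adjacent-connected, for instance via a spanning-tree/DFS ordering as in \cite{FS02, GL17}. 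Concatenating these orderings down the tree gives a linear order on the cubes of each generation, compatible with the tree and with the same segment property.

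First, order the weights so that \(\lambda_1\ge\lambda_2\ge\dots\ge\lambda_n\), and process them in this order. Large pieces are carved first, so small pieces only have to fit into leftovers. Suppose \(X_1,\dots,X_{j-1}\) have been removed and the remainder \(R_j\) has measure \(\sum_{i\ge j}\lambda_i\). Let \(k_j\) be the generation with \(\delta^{k_j d}\asymp\lambda_j\), chosen with enough margin that each cube of generation \(k_j\) has measure at most \(\lambda_j\). Pick the first cube \(Q\) of generation \(k_j-m\) in the fixed order that still meets \(R_j\) in positive measure, where \(m\) is a fixed constant. Inside \(Q\cap R_j\), add whole subcubes following the order, descending the tree whenever a whole cube would exceed the target. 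The measure \(\lambda_j\) is then achieved exactly in the limit. Exactness needs \(\mu\) to be nonatomic, which Ahlfors--David regularity gives, and measurable selections along the descent handle the last fractional bit. If \(Q\cap R_j\) has too little measure, we move on to the next cube(s) in the order, which stay adjacent by the segment property.

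The key invariant, and the main obstacle, is that the remainder stays "front-loaded": \(R_j\) is a union of a terminal segment in the dyadic order, plus a bounded number of partially eaten cubes at each generation along one branch. Then \(X_j\) lies in a chain of boundedly many consecutive, adjacent cubes of generation about \(k_j\). Adjacent cubes of comparable generation are at distance \(\lesssim\delta^{k_j}\), so \(\diam X_j\le C\delta^{k_j}\le C^*\lambda_j^{1/d}\).

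Two things make the invariant hard. One is that leftover slivers from earlier, larger pieces may have tiny measure, so one could be forced to gather measure from many far-apart slivers. The other is that the last pieces must exactly exhaust the remainder. I would first try to handle both with a potential argument. Because the weights decrease, the leftover of \(X_{j-1}\) inside its last cube has measure that is small relative to \(\lambda_{j-1}\) but is concentrated along one branch of the tree. Any later \(X_j\) with \(\lambda_j\) smaller descends into this branch and consumes it before advancing. The greedy choice of "first cube meeting \(R_j\)" guarantees this. Running total measure then ensures that the final pieces exactly fill the terminal segment, since \(\sum_{i\ge j}\lambda_i=\mu(R_j)\).

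Connectedness of \(X\) and of the cube decomposition is used precisely to guarantee that consecutive cubes in the order are close. Without it, the terminal segment could be split across far-apart components.
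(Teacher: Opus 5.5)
There is a genuine gap: the fixed order your diameter bound relies on does not exist in general, and no order of that kind can work. You need, at every generation, an order of the cubes that is compatible with nesting and in which consecutive cubes are adjacent, so that $X_j$ lies in a short chain of consecutive, adjacent cubes.

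The property you ask of the children of one cube already fails when their adjacency graph is $K_{1,3}$: a prefix of length two and a suffix of length two would both have to contain the centre. DFS orders only give connected prefixes, and concatenation does not make the last child of one parent touch the first child of the next.

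In fact no nesting-compatible order can work on the tripod (three unit segments glued at a common endpoint, with length measure). It is connected, Ahlfors--David regular of dimension $1$, and cube-decomposable by \cref{lem: complexCubeDecomp}.
\begin{itemize}
\item For any connected cube decomposition and $k$ large, the three cubes containing the free endpoints are short intervals of degree one in $G_k$. So $G_k$ has no Hamiltonian path.
\item Hence some consecutive pair $q,q'$ of generation $k$ has disjoint closures, and so lies at distance $\eta>0$.
\item Since the order is compatible with nesting, the descendants of $q$ are immediately followed by those of $q'$ at every finer generation.
\item Fill along the order, as you do, with $\lambda_1=P-\epsilon/2$, where $P$ is the total measure of the cubes preceding $q'$, and $\epsilon=\lambda_2\ge\lambda_3\ge\dots$. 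The second piece has measure $\epsilon$ but meets both $q$ and $q'$, so its diameter is at least $\eta\gg\epsilon^{1/d}$.
\end{itemize}
Conversely, if a good order existed, cutting consecutive intervals would work for \emph{any} order of the weights. So the obstacle you attribute to the ``front-loaded'' invariant is really the nonexistence of the order, and your potential argument does not address it.

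The paper avoids linearisation altogether. It builds spanning trees $T_k$ of $G_k$ compatibly across generations: an edge $\alpha'\to\beta'$ of $T_k$ is lifted to an edge between nested children of $\alpha'$ and $\beta'$, then completed by a spanning tree of $G_{k+1,\beta'}$, which is connected by \cref{prop: connected}. It then fills from the leaves towards the root while keeping the configuration $k$-legal: for $k\le l\le K$, every nonempty cube of generation $l$ has all its $T_l$-children full.

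A piece of mass $w$, with $k$ maximal such that $w\le s_k$, is placed at an empty cube $\alpha\in I_k$ with no empty $T_k$-children, or at the root if no empty cube remains. It first tops up those children, which are ripe or full, by a saturation that descends only into ripe nested children and so stays inside each child. It then fills $\alpha$, hence $X_j\subset B(x^k_\alpha,3C\delta^k)$ by \cref{prop: cubes}.

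The decreasing order of the weights serves only to make the generations $k_j$ nondecreasing, so that legality is inherited from step to step. Finally, $\bigsqcup_j X_j=X$ holds exactly because the paper stops at a finite generation $K$ with $S_K\le\min_j\lambda_j$. There a cube that becomes full receives all of $Q^K_\alpha\setminus V$, whereas your limiting procedure could leave a null set unassigned.
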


    Since in an Ahlfors--David regular space the diameter of a set is
    bounded below in terms of its measure, the resulting bound on the
    diameter of each part is sharp up to constants (the formal statement
    and proof are given in \cref{cor: diameter}).

    The main extra condition which we require on the
    dyadic cube decomposition in order to apply this theorem is that
    the cubes are connected.
    Constructions developed in the abstract
    setting, for example in \cite{HK12},
    do not have this property in general.

    \smallskip

    To close this gap, we develop some tools
    that allow us to construct spaces which
    admit a connected dyadic cube decomposition.
    We shall call such a space \textbf{\cubedec}.
    Building \cubedec spaces is an interesting
    problem in its own right.
    One natural source of examples consists of
    metric spaces that are bi-Lipschitz images of
    Euclidean space, such as graphs of Lipschitz
    functions. However, not every manifold admits
    such a global bi-Lipschitz parametrisation.
    This motivates a triangulation-based approach:
    we obtain the well-studied family of
    metric spaces with a bi-Lipschitz triangulation.
    One such example is a compact \(C^1\) manifold. 
    It admits a bi-Lipschitz triangulation by the classical
    Cairns--Whitehead triangulation theorem
    \cite{Cai35, Whi40, Mun66}, combined with some additional 
    observations that we obtain in \cref{sec: manifolds}.

    \begin{theorem} \label{thm: manifoldIntro}
        Suppose that \(M\) is a compact connected
        \(C^1\) manifold, equipped with a Riemannian metric \(g\)
        and the corresponding geodesic metric \(\rho_g\),
        and \(\mu\) is an Ahlfors--David regular positive
        Borel measure on \(M\). Then, \(M\) admits
        a weighted decomposition.
    \end{theorem}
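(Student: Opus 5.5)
The plan is to deduce \cref{thm: manifoldIntro} from \cref{thm: partitionConnectedIntro}. Since \(M\) is connected and \(\mu\) is Ahlfors--David regular of some dimension \(d\), it suffices to show that \((M, \rho_g)\) is \cubedec, i.e.\ that it admits a connected dyadic cube decomposition in the sense of \cref{def: connectedCubes}. Compactness of \(M\) and Ahlfors regularity give \(0<\mu(M)<\infty\), so the measure hypotheses hold. Note also that \(d\) must equal the topological dimension \(m\) of \(M\), because \(\rho_g\) is locally bi-Lipschitz to Euclidean \(\R^m\). We will not need this, however, since the weighted decomposition is only asserted to exist with some order.

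\textbf{Step 1: a bi-Lipschitz triangulation.} By the Cairns--Whitehead theorem \cite{Cai35, Whi40, Mun66}, \(M\) admits a \(C^1\) triangulation. This is a finite simplicial complex \(K\) together with a homeomorphism \(h\colon |K|\to M\) whose restriction to each closed simplex is a \(C^1\) embedding of full rank. Because each simplex is compact and \(dh\) is injective on it, \(h\) is bi-Lipschitz on each simplex, with respect to the Euclidean metric on the simplex and \(\rho_g\) on \(M\). Finiteness of \(K\) gives uniform constants. The part needing care is passing from simplexwise bi-Lipschitz to a global bi-Lipschitz map from \(|K|\), equipped with its intrinsic path metric, onto \((M,\rho_g)\).
\begin{itemize}
\item The upper bound follows by concatenating segments inside simplices.
\item For the lower bound, we take a near-geodesic in \(M\) and use that it meets only finitely many simplices near any point. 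We then use a Lebesgue-number argument: points at \(\rho_g\)-distance below some \(\delta\) lie in a common star of a vertex, where the local chart argument applies. At scales above \(\delta\), compactness bounds the ratio of the two metrics.
\end{itemize}

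\textbf{Step 2: connected dyadic cubes on a simplicial complex.} We next produce a connected dyadic cube decomposition of \(|K|\) with its path metric, and then transport it by \(h\). The transport step should be a general lemma: bi-Lipschitz images of \cubedec spaces are \cubedec, since connectedness is topological and the dyadic inclusion and size conditions degrade only by constants. For \(|K|\) itself, first take a barycentric subdivision so that each \(m\)-simplex is a union of pieces affinely equivalent to a standard simplex. Then use iterated standard subdivisions, such as edgewise (Freudenthal) subdivision into \(2^m\) congruent-up-to-finitely-many-shapes simplices. This yields a nested family of partitions in which each generation-\(k\) piece is a closed simplex of diameter comparable to \(2^{-k}\) and containing a ball of comparable radius. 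Each piece is connected, each has a bounded number of children, and children partition the parent up to boundaries. Boundaries must be assigned to exactly one cube, for instance by a fixed ordering of faces. This must preserve connectedness, so each cube is taken as the relative interior plus a chosen set of faces in a way that stays connected; a half-open simplex is still connected. Lower-dimensional simplices not in any \(m\)-simplex cannot occur, since \(M\) is a manifold.

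\textbf{Step 3: conclude.} Applying \cref{thm: partitionConnectedIntro} to \((M,\rho_g,\mu)\) gives the weighted decomposition.

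I expect the main obstacle to be Step 1's global lower Lipschitz bound together with checking that the face-assignment in Step 2 keeps all cubes connected while remaining a genuine partition. For the former I would try the star-of-vertex covering argument first. For the latter, I would use half-open simplices defined via a lexicographic tie-breaking rule, which is known to tile consistently.
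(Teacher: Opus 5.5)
Your overall architecture matches the paper's: a Cairns--Whitehead triangulation \(h\colon |K|\to M\), a global bi-Lipschitz comparison of \((|K|,\rho_K)\) with \((M,\rho_g)\), a connected dyadic decomposition of \(|K|\), transport, and then the main theorem. The gap is in the step you flag yourself, the global lower bound \(\rho_K(x,x')\lesssim\rho_g(h(x),h(x'))\); what you propose does not establish it.

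\begin{itemize}
\item The Lebesgue-number reduction only moves the problem into a vertex star. There \(h\) is still merely simplexwise \(C^1\). For \(x,x'\) in different simplices of the star there is no ``local chart argument'': neither the mean value inequality nor the inverse function theorem applies across faces. So comparing \(\rho_K(x,x')\) with \(\rho_g(h(x),h(x'))\) inside the star is the original problem again.
\item The fact that a near-geodesic meets only finitely many simplices does not let you cut it into finitely many sub-arcs, each lying in one closed simplex. The image of a face is only a \(C^1\) hypersurface. For example, in a chart it can be the graph of \(x^3\sin(1/x)\), which a straight segment crosses infinitely often with the crossings accumulating. So finiteness of \(K\) does not bound the number of alternations.
\end{itemize}

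The missing idea, used in the proof of \cref{thm: geodesicCubeDecomp}, is that only the \emph{endpoints} of each sub-arc need to lie in a common closed simplex. This suffices because \(\rho_g(y_j,y_{j+1})\le\len(\gamma|_{[t_j,t_{j+1}]})\) wherever the sub-arc goes. The construction is a last-exit argument:
\begin{itemize}
\item Take any path \(\gamma\) from \(y\) to \(y'\), and put \(\beta:=h^{-1}\circ\gamma\) and \(t_0:=0\).
\item Since the open star of \(\beta(t_j)\) is open, some \(t^*>t_j\) and some simplex \(S_j\) satisfy \(\beta(t_j),\beta(t^*)\in\overline{S}_j\).
\item Set \(t_{j+1}:=\sup\{t:\beta(t)\in\overline{S}_j\}\).
\end{itemize}
Then \(t_{j+1}\ge t^*>t_j\), and no simplex is chosen twice, so \(t_n=1\) after finitely many steps. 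With \(x_j:=\beta(t_j)\) and \(y_j:=\gamma(t_j)\), you get \(\len(\gamma)\ge\sum_j\rho_g(y_j,y_{j+1})\ge L^{-1}\sum_j\rho_K(x_j,x_{j+1})\ge L^{-1}\rho_K(x,x')\). This holds at all scales at once, so no Lebesgue number or small/large-scale split is needed.

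Your Step~2 is a workable alternative to the paper's route, which maps a cube bi-Lipschitzly onto each maximal simplex and glues via \cref{lem: glue}; Freudenthal's subdivision, with its finitely many shapes, can do the same job. However, a face assignment that tiles consistently at a single level is not enough: property (b) requires the assignments at different levels to be nested. The paper's fix is \cref{prop: sandwich}:
\begin{itemize}
\item Order generation-\((k+1)\) cubes by the order of their nested parent first.
\item Assign each point to the minimal cube whose closure contains it.
\end{itemize}
The minimal eligible cube at level \(k+1\) is then a child of the minimal eligible cube at level \(k\). Each cube lies between an open simplex and its closure, so it is connected.
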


    \medskip

    \subsection*{Outline of the paper}
    In \cref{sec: dyadicCubes} we recall the basic definitions of Ahlfors--David regular
    spaces and of dyadic cubes, introduce the additional connectedness
    property (\cref{def: connectedCubes}) that we require
    throughout, and establish some technical properties of dyadic
    cubes.
    In \cref{sec: algorithm} we prove \cref{thm: partitionConnectedIntro},
    the main result of the paper: we construct an
    explicit algorithm that partitions \(X\) into pieces of prescribed
    measures.
    In \cref{sec: tools} we develop tools for verifying that a given space
    is \cubedec: in particular,
    we show that this property is preserved under bi-Lipschitz maps.
    We also show that such spaces may be glued together
    from finitely many pieces.
    In \cref{sec: manifolds} we apply these tools to obtain
    \cref{thm: manifoldIntro} and get further examples of
    well-known domains with the desired property.
    Finally, in \cref{sec: questions} we discuss open questions and
    applications of our result.

%% file: chapters/dyadicCubes.tex
\section{Basic definitions and dyadic cubes construction} \label{sec: dyadicCubes}

    In this paper we suppose that \(X\) is a connected metric space
    with metric \(\rho\). 
    The symbol \(\mu\) always denotes a positive Borel measure on \(X\).
    We write \(\overline{Y} \) for the closure of \(Y \subset X\)
    with respect to the topology of \(X\), where \(X\) is
    understood from the context (unless stated otherwise).
    Let \(B(x, R)\) denote the open ball with centre \(x\) and radius \(R\),
    where \(x\) is a point of some metric space \(X\).
    The notation \(\B^d\), \(\Delta^d\) and \(\sigma^d\) is used,
    respectively, for
    the open unit ball, the open regular simplex with unit
    edge length, and the open unit cube in \(\Rd\)
    centred at the origin,
    equipped with the standard Euclidean metric.
    
    We shall use the following notation throughout. For two
    nonnegative quantities \(A\) and \(B\) we write \(A \lesssim_X B\)
    if there exists a constant \(C_X > 0\), depending only on
    \(X\), such that \(A \le C_X B\). 
    We write \(A \gtrsim_X B\) if
    \(B \lesssim_X A\), and \(A \asymp_X B\) if both \(A \lesssim_X B\)
    and \(A \gtrsim_X B\) hold.

    \smallskip

    Recall the standard definitions of Ahlfors--David
    regularity and of a doubling metric
    space.

    \begin{definition} \label{def: doublingSpace}
        The metric space \(X\) is called \textbf{doubling}
        if there exists some constant \(C > 0\), such that
        for each \(x \in X\) and \(r > 0\)
        the ball \(B(x, 2r)\) may be covered by at most \(C\)
        balls of radius \(r\).
    \end{definition}

    \begin{definition} \label{def: AhlRegSpace}
        A metric space \(X\) with Borel measure \(\mu\)
        is called \textbf{Ahlfors--David regular}
        of dimension \(d\)
        if for each \(x \in X \) and 
        \(0 < r \le \mathrm{diam} X\) it holds that
        \[
            \mu( B(x, r) ) \asymp_X r^d.
        \]
    \end{definition}

    Note that if \(X\) may be equipped with some Borel measure \(\mu\),
    which makes it Ahlfors--David regular, then
    it is obviously doubling.
    Hence, we may assume
    that all metric spaces we work with are doubling.

    We will use the definition of dyadic cubes given below.

    \begin{definition} [\cite{HK12}, Thm. 2.2] \label{def: dyadicCubes}
        Suppose that \((X, \rho)\) is a metric space.
         The collection of subsets
        \(Q_\alpha ^ k \subset X \), where \(k \in \N_0 \) 
        and \(\alpha \in I_k \), is called
        a family of \textbf{dyadic cubes} if there exist
        a \textbf{ball constant} \(C > 0\) and
        a \textbf{scaling value} \(\delta \in (0, 1) \)
        such that these properties hold:
        \begin{itemize}
            \item [(a)] \(X = \bigsqcup_{\alpha \in I_k} Q_\alpha ^ k\)
            for each \(k \in \N_0\);
            \item [(b)] for each \(l \ge k\), \(\alpha \in I_l\) and \(\beta \in I_k\),
                either \(Q_\alpha ^ l \subset Q_\beta ^ k\)
                or \(Q_\alpha ^ l \cap Q_\beta ^ k = \emptyset \);
            \item [(c)] for each \(Q_\alpha ^ k \) there exists a point
                \(x_\alpha ^ k \in Q_\alpha ^ k\) such that
                \[
                    B(x_\alpha ^ k, C^{-1} \delta^k)
                    \subseteq
                    Q_\alpha ^ k
                    \subseteq
                    B(x_\alpha ^ k, C \delta^k).
                \]
        \end{itemize}
    \end{definition}

    This decomposition exists in quite general
    metric spaces.

    \begin{lemma} [\cite{HK12}, Thm. 2.2]
        \label{lem: decomposition}
        A doubling metric space admits a dyadic cube decomposition.
    \end{lemma}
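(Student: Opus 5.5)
The plan is to follow the Christ / Hytönen–Kairema construction: first build a nested family of maximal separated nets, then a parent relation between consecutive generations, and finally define cubes as unions along the resulting tree.

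\emph{Nets.} Fix a small \(\delta \in (0,1)\), say \(\delta \le 1/1000\). For each \(k \in \N_0\) choose, by Zorn's lemma, a maximal \(\delta^k\)-separated set \(\{x_\alpha^k\}_{\alpha \in I_k}\), and arrange the choices to be nested: the generation-\(k\) points are also generation-\((k+1)\) points. To do this, extend a maximal \(\delta^k\)-separated set to a maximal \(\delta^{k+1}\)-separated set, which is possible since \(\delta^{k+1} < \delta^k\). Maximality gives that every \(x\) lies within \(\delta^k\) of some \(x_\alpha^k\). The doubling property ensures that only boundedly many net points of generation \(k\) lie in any ball of radius \(\delta^{k-1}\). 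This finiteness is used to make the selection choices below, though at each step one can also simply choose with the axiom of choice.

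\emph{Tree.} Assign to each \(x_\beta^{k+1}\) a parent \(x_\alpha^k\) with \(\rho(x_\beta^{k+1},x_\alpha^k) < \delta^k\). If \(x_\beta^{k+1}\) is itself a generation-\(k\) point, its parent is itself; otherwise pick any \(\alpha\) with distance less than \(\delta^k\). Write \(\beta \preceq \alpha\) for the descendant relation obtained by iterating parents. Define
\[
  \widehat Q_\alpha^k = \overline{\{x_\beta^l : (l,\beta) \preceq (k,\alpha)\}}.
\]
Summing the geometric series gives that descendants of \(x_\alpha^k\) stay within \(\delta^k(1+\delta+\dots) \le 2\delta^k\) of it. Set
\[
  \tilde Q_\alpha^k = \Int \widehat Q_\alpha^k .
\]
The half-open cubes \(Q_\alpha^k\) are then obtained from the closed sets \(\widehat Q_\alpha^k\) by assigning each point \(x\) lying in several of them to one cube. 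To respect nesting, do this via the label sequence: for each \(x\) take, among generation-\(k\) cubes containing it, the one chosen consistently with the choice at generation \(k-1\). Concretely, put a well-order on each \(I_k\), define \(Q_\alpha^k\) inductively on \(k\) as the points of \(Q^{k-1}_{\text{parent}}\) lying in \(\widehat Q_\alpha^k\) but in no earlier-indexed \(\widehat Q_{\alpha'}^k\) with the same parent. This yields (a) and (b) directly, once one checks that the \(\widehat Q_\alpha^k\) with a given parent cover \(Q^{k-1}_{\text{parent}}\). That covering follows because every point is a limit of net points of arbitrarily high generation (the nets are \(\delta^l\)-dense), and each such net point descends through the tree.

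\emph{Ball property (c).} The upper inclusion \(Q_\alpha^k \subset B(x_\alpha^k, C\delta^k)\) follows from the \(2\delta^k\) bound above. The main obstacle is the lower inclusion \(B(x_\alpha^k, C^{-1}\delta^k) \subset Q_\alpha^k\). I would argue as follows. Take \(x\) with \(\rho(x,x_\alpha^k) < \delta^k/4\), and let \(x_\beta^l\) be a net point of high generation close to \(x\). Trace the ancestor chain of \(x_\beta^l\) up to generation \(k\); its accumulated displacement is at most \(2\delta^{k+1}\). Any generation-\(k\) ancestor \(x_{\alpha'}^k \ne x_\alpha^k\) is at distance at least \(\delta^k\) from \(x_\alpha^k\), since the net is \(\delta^k\)-separated. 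So the ancestor must be \(x_\alpha^k\) unless the chain moved a large distance, which it cannot.

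The subtle point is the ancestor at generation \(k\) itself. The displacement from generation \(k+1\) to generation \(k\) may be close to \(\delta^k\), and this is handled by the requirement that the parent be within \(\delta^k\), combined with triangle-inequality slack. If needed, fix this by choosing the parent to be the \emph{nearest} generation-\(k\) point and shrinking the lower radius to \(\delta^k/8\). With nearest-parent selection, a generation-\((k+1)\) point within \(\delta^k/4\) of \(x_\alpha^k\) is strictly closer to \(x_\alpha^k\) than to any other generation-\(k\) point. Hence every point near \(x\) of high generation descends from \(\alpha\), and \(x\) lies only in \(\widehat Q_\alpha^k\) among its siblings, so \(x \in Q_\alpha^k\). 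This gives (c) with \(C = 8/\delta\) or similar, completing the construction.
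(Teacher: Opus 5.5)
Your proposal is correct and is essentially the Hyt\"onen--Kairema construction \cite{HK12} that the paper cites without reproducing: maximal separated nets (you take them nested, which \cite{HK12} does not need), a parent map, closed cubes defined as closures of descendant sets, and half-open cubes cut out top-down by a well-order, with only $k \ge 0$ required here. Two points to tighten. First, the nearest-parent rule is not optional: it plays the role of the requirement in \cite{HK12} that a generation-$(k+1)$ point within $\tfrac12\delta^k$ of $x_\alpha^k$ be its child, and with an arbitrary parent within $\delta^k$ no uniform inner radius is guaranteed. Second, to conclude $x \in Q_\alpha^k$, use that your argument excludes $x$ from $\widehat Q_\gamma^k$ for every $\gamma \neq \alpha$, not only for siblings; together with (a) and $Q_\gamma^k \subseteq \widehat Q_\gamma^k$ this forces $x \in Q_\alpha^k$, whereas the sibling-only statement does not by itself show that $x$ lies in the parent's half-open cube.
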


    However, we need dyadic cubes to satisfy one more property,
    which makes the whole picture more complicated.

    \begin{definitionbold} \label{def: connectedCubes}
        A family of dyadic cubes \(Q_\alpha ^ k\) is called \textbf{connected}
        if properties (a)--(c) of \cref{def: dyadicCubes} hold,
        and, in addition:
        \begin{itemize}
            \item [(d)] each \(Q_\alpha ^ k\) is connected as a subset of \(X\).
        \end{itemize}
    \end{definitionbold}

    We now introduce the
    key definitions which we would like to work with.

    \begin{definitionbold} \label{def: cubeDecomp}
        Suppose that \((X, \rho)\) is a metric space.
        We say that it is \textbf{\cubedec} if it admits a
        connected dyadic cube decomposition.
        Suppose that \(Y \subset X\), equipped with the restricted
        metric, admits a connected dyadic cube decomposition
        \(Q_\alpha ^ k\).
        If, in addition, property (c) of \cref{def: dyadicCubes}
        holds with the inner ball taken in \(X\), we say that
        \(Y\) is \textbf{\cubedec in} \(X\).
    \end{definitionbold}

    \begin{remark}
        In particular, a set that is \cubedec in \(X\) is
        \cubedec, and \(X\) is \cubedec in itself.
        The converse does not hold. For example, consider 
        a circle \(\T \subset \R^2\) with the restricted 
        Euclidean metric.
        It is trivial that \(\T\) is \cubedec, but no ball
        lies inside it, so \(\T\) is not \cubedec in \(\R^2\).
    \end{remark}

    Next, we introduce some notation and
    prove extra properties of dyadic cubes that we also need.

    Say that \(\eta \subset \sigma \) for some \(\eta \in I_l \), 
    \(\sigma \in I_k\) and
    \(l \ge k \) if \(Q_\eta ^ l \subset Q_\sigma ^ k \).
    We say that \(\eta\) is a \textbf{nested descendant}
    of \(\sigma\) and
    \(\sigma\) is a \textbf{nested ancestor} of \(\eta\), if
    \(\eta \subset \sigma \).
    If \(\eta \in I_{k + 1}\) and \(\sigma \in I_{k} \),
    we call them \textbf{nested child} and \textbf{nested parent}
    respectively.
    It follows from property (b) of \cref{def: dyadicCubes} that each cube
    of generation \(k \ge 1\) has a unique nested parent.
    For \(0 \le k \le l \) and \(\sigma \in I_k\), denote
    \[ 
        I_{l, \sigma} 
        := 
        \{ \eta \subset \sigma \mid \eta \in I_l\}.
    \]
    Property (b) of \cref{def: dyadicCubes} gives that
    \begin{equation} \label{eq: childUnion}
        Q_\sigma ^ k 
        = 
        \bigsqcup_{\eta \in I_{l, \sigma}} Q_\eta ^ l.
    \end{equation}

    \smallskip

    Given a family of dyadic cubes, we may consider
    a well-ordering \(\prec_k \) of each \(I_k\).
    We construct it inductively.
    The order on \(I_0\) is arbitrary.
    Let \(k \ge 0\) and \(\alpha, \beta \in I_{k + 1}\).
    Consider nested parents \(\alpha', \beta' \in I_k\),
    \(\alpha \subset \alpha'\) and \(\beta \subset \beta'\).
    If \(\alpha' \prec_k \beta'\) we say that
    \(\alpha \prec_{k+1} \beta\). This gives an order
    on pairs with different parents.
    On the children of some \(\sigma \in I_k\)
    we construct an arbitrary well-ordering.

    \smallskip 

    We will state some properties of the
    closure of dyadic cubes.
    Firstly, we show that in a doubling metric
    space the decomposition is locally finite.

    \begin{proposition} \label{prop: finite}
        Suppose that \(X\) is doubling,
        and some \(Y \subset X\) admits a
        dyadic cube decomposition \(Q_\alpha ^ k\).
        Then, for each \(k \in \N_0\), the family
        \(\{Q_\alpha ^ k\}_{\alpha \in I_k}\)
        is locally finite.
    \end{proposition}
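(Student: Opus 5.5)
We need to show that every point of \(X\) (or of \(Y\); the relevant neighbourhoods are taken in the ambient space \(X\)) has a neighbourhood meeting only finitely many cubes \(Q_\alpha^k\) for fixed \(k\). The plan is a packing argument: the cubes of generation \(k\) contain disjoint balls of a fixed radius, and a doubling space cannot hold infinitely many such disjoint balls inside a bounded region.

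Fix \(k \in \N_0\) and \(x \in X\), and take the neighbourhood \(B(x, r)\) with \(r := \delta^k\). Suppose \(Q_\alpha^k\) meets \(B(x,r)\). By property (c) of \cref{def: dyadicCubes}, \(Q_\alpha^k \subset B(x_\alpha^k, C\delta^k)\), so the triangle inequality gives \(\rho(x, x_\alpha^k) < r + C\delta^k = (1 + C)\delta^k\). Hence every centre \(x_\alpha^k\) of such a cube lies in \(B(x, R)\) with \(R := (1+C)\delta^k\).

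Next we show that the centres are uniformly separated. By (c), \(B(x_\alpha^k, C^{-1}\delta^k) \subset Q_\alpha^k\), where the ball is taken in \(Y\) with the restricted metric. By (a) the cubes of generation \(k\) are pairwise disjoint, so for \(\alpha \neq \beta\) the point \(x_\beta^k \in Y\) cannot lie in \(B(x_\alpha^k, C^{-1}\delta^k)\). Therefore \(\rho(x_\alpha^k, x_\beta^k) \ge C^{-1}\delta^k =: s\). Since the centres are points of \(Y\), only balls taken in \(Y\) are used here, so this step holds even without the stronger ``in \(X\)'' condition. Since \(\alpha \mapsto x_\alpha^k\) is injective, it suffices to bound the number of \(s\)-separated points in \(B(x, R)\).

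The last step, which is the only real work, is the standard fact that in a doubling space a ball of radius \(R\) contains at most \(N = N(R/s)\) points that are pairwise at distance \(\ge s\). Apply \cref{def: doublingSpace} iteratively \(m\) times, with \(2^{-m}R < s/2\). This covers \(B(x, R)\) by at most \(C_D^m\) balls of radius \(2^{-m}R < s/2\), where \(C_D\) is the doubling constant. Each such ball has diameter \(< s\), so it contains at most one of the separated centres. Hence at most \(C_D^m\) cubes of generation \(k\) meet \(B(x,r)\), and \(m\) depends only on \(C\) and not on \(k\) or \(x\). This gives local finiteness, in fact with a uniform bound on the number of cubes.
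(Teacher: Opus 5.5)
Your proof is correct and uses the same packing argument as the paper. The cubes meeting a ball around \(x\) have \(C^{-1}\delta^k\)-separated centres, equivalently disjoint inner balls, lying in a fixed larger ball, and the doubling property bounds how many such centres there can be. You also spell out the counting step the paper leaves implicit (iterated doubling covers by balls of radius below \(s/2\)), which gives a bound uniform in \(k\) and \(x\).
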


    \begin{proof}
        Take some \(x \in X\) and consider the neighbourhood
        \(U := B(x, C \delta ^ k) \).
        If \(U \cap Q_\alpha ^ k \neq \emptyset\),
        we have
        \[
           B(x_\alpha ^ k, C^{-1} \delta^k)
           \subset
           Q_\alpha ^ k
           \subset
           B(x, 3 C \delta ^ k),
        \]
        and the balls \(B(x_\alpha ^ k, C^{-1} \delta^k)\) are
        pairwise disjoint for distinct \(\alpha\), since the cubes
        \(Q_\alpha ^ k\) are.
        The doubling property applied to \(B(x, 3C \delta^k)\)
        implies that only finitely many pairwise disjoint balls of
        radius \(C^{-1} \delta^k\) fit inside it, so only finitely
        many such \(\alpha\) exist, and the family is indeed
        locally finite.
    \end{proof}

    Now, we conclude with a property of closures.

    \begin{corollary} \label{cor: closureUnion}
        Suppose that \(X\) is doubling,
        and some \(Y \subset X\) admits a
        dyadic cube decomposition \(Q_\alpha ^ k\).
        Then,
        \begin{equation} \label{eq: closureChildUnion}
            \overline{Q}_{\alpha'} ^ k
            =
            \bigcup_{\alpha \in I_{k + 1, \alpha'}} \overline{Q}_\alpha ^ {k + 1}
        \end{equation}
        and
        \begin{equation} \label{eq: closureUnion}
            \overline{Y} = \bigcup_{\alpha \in I_k} \overline{Q}_\alpha ^ k.
        \end{equation}
    \end{corollary}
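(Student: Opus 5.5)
The plan is to deduce both identities from the elementary fact that the closure of a locally finite union equals the union of the closures. Combined with \cref{prop: finite} and the partition identities \eqref{eq: childUnion} and property (a) of \cref{def: dyadicCubes}, this gives both statements. Throughout, closures are taken in \(X\), which is doubling by assumption.

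I first record the general fact. Let \(\{A_i\}_{i \in I}\) be a locally finite family of subsets of a metric (or topological) space \(X\). Then \(\overline{\bigcup_i A_i} = \bigcup_i \overline{A_i}\). The inclusion \(\supseteq\) is trivial. For \(\subseteq\), take \(x \in \overline{\bigcup_i A_i}\) and a neighbourhood \(U\) of \(x\) meeting only finitely many \(A_{i_1}, \dots, A_{i_m}\). Then \(x \in \overline{U \cap \bigcup_i A_i} \subseteq \overline{A_{i_1} \cup \dots \cup A_{i_m}} = \overline{A_{i_1}} \cup \dots \cup \overline{A_{i_m}}\), because closure commutes with finite unions. (The first inclusion holds since every neighbourhood \(V\) of \(x\) satisfies \(V \cap U \cap \bigcup_i A_i \neq \emptyset\).)

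For \eqref{eq: closureUnion}, fix \(k\). By property (a), \(Y = \bigsqcup_{\alpha \in I_k} Q_\alpha^k\). By \cref{prop: finite}, this family is locally finite in \(X\). The general fact then yields \(\overline{Y} = \bigcup_{\alpha \in I_k} \overline{Q}_\alpha^k\).

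For \eqref{eq: closureChildUnion}, apply \eqref{eq: childUnion} with \(l = k+1\), which gives \(Q_{\alpha'}^k = \bigsqcup_{\alpha \in I_{k+1,\alpha'}} Q_\alpha^{k+1}\). The family \(\{Q_\alpha^{k+1}\}_{\alpha \in I_{k+1,\alpha'}}\) is a subfamily of the locally finite family \(\{Q_\alpha^{k+1}\}_{\alpha \in I_{k+1}}\) from \cref{prop: finite}, so it is itself locally finite. Applying the general fact again gives the identity.

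There is no serious obstacle. The only point needing care is that \cref{prop: finite} gives local finiteness at every point of \(X\), not only of \(Y\). This matters because points of \(\overline{Y} \setminus Y\) must be handled, and the proof of \cref{prop: finite} indeed takes an arbitrary \(x \in X\).
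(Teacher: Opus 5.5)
Your proposal is correct and follows essentially the same route as the paper. Both combine local finiteness from \cref{prop: finite} with the partition identities \eqref{eq: childUnion} and property (a), and then use the fact that closure commutes with locally finite unions. The only differences are that you prove this topological fact explicitly, where the paper cites it as standard, and that you point out local finiteness is needed at every point of \(X\) (and that the child subfamily inherits it).
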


    \begin{proof}
        By \cref{prop: finite}, the family \(\{Q_\alpha ^ k\}_{\alpha \in I_k}\)
        is locally finite for each \(k\).
        Then, \eqref{eq: childUnion} yields
        \[
            Q_{\alpha'} ^ k
            =
            \bigsqcup_{\alpha \in I_{k + 1, \alpha'}} Q_\alpha ^ {k + 1}.
        \]
        The standard topological fact that
        closure commutes with locally finite unions,
        gives \eqref{eq: closureChildUnion}.
        The same fact, applied
        to property (a) of \cref{def: dyadicCubes}, gives
        \eqref{eq: closureUnion}, and the corollary follows.
    \end{proof}

    The following technical property 
    allows us to transfer connected decompositions 
    to the closures.

    \begin{proposition} \label{prop: sandwich}
        Suppose that \(Y \subset X\) is \cubedec
        in \(X\), the closure \(\overline{Y}\) with the restricted
        metric is doubling, and \(Y \subset Z \subset \overline{Y}\).
        Then \(Z\) is also \cubedec in \(X\).
    \end{proposition}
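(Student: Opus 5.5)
The plan is to transfer the cubes of \(Y\) to \(Z\) by closure. Let \(Q_\alpha^k\) be the connected dyadic cubes of \(Y\), whose inner balls lie in \(X\), with constants \(C\) and \(\delta\). I first define auxiliary sets \(P_\alpha^k := \overline{Q}_\alpha^k \cap Z\), where the closure is taken in \(X\) (equivalently in \(\overline{Y}\)). These sets are connected and satisfy (b) and (c), but they are not disjoint, since neighbouring closures overlap on their boundaries. So I will instead assign each point of \(Z \setminus Y\) to exactly one cube, consistently across generations, and define \(\widetilde{Q}_\alpha^k\) as \(Q_\alpha^k\) together with the points assigned to it.

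The assignment works as follows. For \(z \in Z \setminus Y\), \cref{cor: closureUnion} applied in the doubling space \(\overline{Y}\) shows that the set \(A_k(z) := \{\alpha \in I_k : z \in \overline{Q}_\alpha^k\}\) is nonempty and finite for each \(k\); finiteness comes from \cref{prop: finite}. Moreover, \eqref{eq: closureChildUnion} shows that every \(\alpha \in A_{k+1}(z)\) has its nested parent in \(A_k(z)\). Hence the sets \(A_k(z)\) form a finitely branching infinite tree under the parent map.

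By K\H{o}nig's lemma there is an infinite branch \(\alpha_0(z) \supset \alpha_1(z) \supset \cdots\). I select it canonically using the well-orderings \(\prec_k\): at each generation, take the \(\prec\)-least child that still has infinitely many descendants in the tree. I then put \(z\) into \(\widetilde{Q}_{\alpha_k(z)}^k\) for every \(k\). With this choice the following properties hold.
\begin{itemize}
\item[(a)] The \(\widetilde{Q}^k_\alpha\) partition \(Z\), since each point of \(Z \setminus Y\) is assigned exactly one index per generation.
\item[(b)] Nestedness holds because the chosen indices form a branch.
\item[(c)] The inner ball \(B(x_\alpha^k, C^{-1}\delta^k)\) in \(X\) is still contained in \(Q_\alpha^k \subset \widetilde{Q}_\alpha^k\). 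The outer inclusion holds with radius \(C\delta^k\) after replacing \(C\) by \(2C\), because \(\widetilde{Q}_\alpha^k \subset \overline{Q}_\alpha^k \subset \overline{B}(x_\alpha^k, C\delta^k)\).
\end{itemize}
Also \(\widetilde{Q}_\alpha^k \subset Z\), so these are genuinely cubes of \(Z\).

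For connectedness (d), note that \(Q_\alpha^k \subset \widetilde{Q}_\alpha^k \subset \overline{Q}_\alpha^k\). A set squeezed between a connected set and its closure is connected, and the closure relative to \(Z\) of \(Q_\alpha^k\) contains \(\widetilde{Q}_\alpha^k\). This step is immediate.

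The main obstacle is making the assignment consistent across generations, that is, ensuring \(z\) lies in \(\overline{Q}^k_{\alpha_k}\) for an entire nested chain. The naive approach of choosing independently the \(\prec_k\)-least element of \(A_k(z)\) at each \(k\) fails. K\H{o}nig's lemma, together with the local finiteness coming from doubling, is exactly what resolves this. I should also double-check that \(\overline{Y}\) being doubling suffices to apply \cref{prop: finite} and \cref{cor: closureUnion} with the ambient space \(\overline{Y}\). This is needed because the cubes' inner balls were only assumed to lie in \(X\), while the counting argument uses disjoint balls of \(Y\), which sit inside \(\overline{Y}\).
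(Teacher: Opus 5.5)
Your proof is correct and builds the same sets as the paper: \(Q_\alpha^k\) together with the points of \(Z \setminus Y\) assigned to it through the closures. You also verify (c) and (d) exactly as the paper does, with ball constant \(2C\) and the sandwich \(Q_\alpha^k \subset \widetilde{Q}_\alpha^k \subset \overline{Q}_\alpha^k\).

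The only real difference is how you make the choices consistent across generations. Here your remark that the naive levelwise choice fails is mistaken for the orderings \(\prec_k\) fixed in the paper; it would fail only for orderings that ignore nesting. The paper simply attaches \(z\) to the \(\prec_k\)-minimal element of \(A_k(z)\) at every level, and this is consistent for two reasons:
\begin{itemize}
\item \(\prec_{k+1}\) compares indices first through their nested parents.
\item The nontrivial content of \eqref{eq: closureChildUnion} is the converse of the direction you quote: every \(\alpha' \in A_k(z)\) has a nested child in \(A_{k+1}(z)\). The direction you quote already follows from monotonicity of closure.
\end{itemize}
Together these force the \(\prec_{k+1}\)-minimal element of \(A_{k+1}(z)\) to be a child of the \(\prec_k\)-minimal element of \(A_k(z)\). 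So K\H{o}nig's lemma is unnecessary. In fact, since every node of your tree has a child and hence infinitely many descendants, your canonical branch coincides with the paper's greedy choice.

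Your route is still valid. It does not depend on the orderings being compatible with nesting, and it only uses that each \(A_k(z)\) is nonempty and finite. The paper's route instead relies on the lexicographic construction of \(\prec_k\) and avoids any compactness-type argument.
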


    \begin{proof}
        Consider a dyadic cube decomposition
        \(\mathcal{P} = \{ P_\alpha ^ k \mid k \in \N_0, \alpha \in I_k \}\)
        for \(Y\).
        By \eqref{eq: closureUnion},
        \(Z \subset \overline{Y} = \bigcup_{\alpha \in I_k} \overline{P}_\alpha ^ k\).

        \smallskip

        We will build a decomposition
        \(\mathcal{Q} = \{ Q_\alpha ^ k \mid k \in \N_0, \alpha \in I_k \}\)
        for \(Z\).
        Fix some \(x \in Z\).  
        If \(x \in Y\), attach \(x\) to \(Q_\alpha ^ k\), where \(\alpha\)
        is the unique index with \(x \in P_\alpha ^ k\).
        Suppose that \(x \in Z \setminus Y \).
        By \cref{prop: finite} applied to \(\overline{Y}\), 
        only finitely many \(\eta \in I_k\)
        satisfy \(x \in \overline{P}_\eta ^ k\), so a \(\prec_k\)-minimal
        such \(\alpha\) exists.
        Attach \(x\) to \(Q_\alpha ^ k\).
        By construction \(\{Q_\alpha^k\}\) partitions \(Z\) at each level \(k\),
        giving property (a) of \cref{def: connectedCubes}.

        \smallskip

        To check (b), we take some
        \(\alpha \in I_{k + 1}\) and consider its parent \(\alpha' \in I_k\).
        We prove that \(Q_\alpha^{k+1} \subseteq Q_{\alpha'}^k\).
        Consider some \(x \in Q_\alpha^{k+1}\).

        If \(x \in Y\), then our construction gives
        \(Q_\alpha^{k+1} \cap Y = P_\alpha^{k+1} \cap Y\), hence
        \(x \in P_\alpha^{k+1}\).
        Thus property (b) of \(\mathcal{P}\) gives
        \(x \in P_\alpha^{k+1} \subset P_{\alpha'}^k \subset Q_{\alpha'} ^ k\).

        For \(x \notin Y\): by \eqref{eq: closureChildUnion},
        \(\overline{P}_{\alpha'}^k = \bigcup_{\alpha \in I_{k + 1, \alpha'}} \overline{P}_\alpha^{k+1}\).
        By definition of \(\prec_{k+1}\), the \(\prec_{k+1}\)-minimal
        eligible \(\alpha\) is always a child of the \(\prec_k\)-minimal eligible
        \(\alpha'\), so \(x \in Q_{\alpha'}^k\) here too.
        In either case, together with (a) this gives (b) by iterating over levels.

        \smallskip

        Inclusions
        \[
            B(x_\alpha ^ k, C^{-1} \delta ^ k)
            \subset
            P_\alpha ^ k
            \subset
            Q_\alpha ^ k
            \subset
            \overline{P}_\alpha ^ k
            \subset
            B(x_\alpha ^ k, 2C \delta^k)
        \]
        (where balls are taken in \(X\)) follow from (c).
        Since \(P_\alpha ^ k \subset Q_\alpha ^ k \subset \overline{P}_\alpha ^ k\)
        and \(P_\alpha ^ k\) is connected, so is \(Q_\alpha ^ k\), 
        which gives (d) for \(\mathcal{Q}\).
    \end{proof}

    \smallskip

    It turns out that if we have a decomposition, 
    we obtain decompositions with every possible 
    scaling value.

    \begin{proposition} \label{prop: scaling}
        Suppose that \(\delta, \theta \in (0, 1)\), and \(Q_\alpha ^ k\)
        is a family of dyadic cubes with the scaling value \(\delta\).
        Then there exists a nondecreasing sequence
        \(\{m_k\}_{k \in \N_0} \), with \(m_0 = 0\), such that
        \(\{Q_\alpha ^ {m_k} \mid \alpha \in I_{m_k}\}_{k \in \N_0}\)
        is also a family of dyadic cubes with the scaling value \(\theta\).
    \end{proposition}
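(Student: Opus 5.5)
Throughout, the new family at level \(k\) will be \(\{Q_\alpha^{m_k}\}\), and I choose the indices \(m_k\) so that \(\delta^{m_k}\) is comparable to \(\theta^k\) uniformly in \(k\). Concretely, set
\[
    m_k := \left\lfloor k \frac{\log \theta}{\log \delta} \right\rfloor ,
\]
so that \(m_0 = 0\). Since \(\log\theta/\log\delta > 0\), the sequence \(\{m_k\}\) is nondecreasing. It may repeat values, so the same level of cubes can appear several times. That is harmless, because Definition~\ref{def: dyadicCubes} only asks for a family indexed by \(k\), with index sets \(I_{m_k}\).

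Write \(r := \log\theta / \log\delta\). Then \(m_k \le k r < m_k + 1\), which gives
\[
    \delta \,\theta^k = \delta^{kr+1} < \delta^{m_k} \le \delta^{kr} = \theta^k .
\]

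Next I check properties (a)--(c) for the new family.

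Property (a) holds immediately, since each \(\{Q_\alpha^{m_k}\}_{\alpha\in I_{m_k}}\) is already a partition of \(X\).

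For property (b), take \(l \ge k\). Monotonicity gives \(m_l \ge m_k\), so property (b) of the original family applies directly to the pair of levels \(m_l\) and \(m_k\).

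For property (c), fix \(\alpha\) at level \(m_k\) with centre \(x_\alpha^{m_k}\). Using \(\delta^{m_k} > \delta\theta^k\), the inner ball satisfies
\[
    B\bigl(x_\alpha^{m_k}, (C/\delta)^{-1}\theta^k\bigr)
    \subseteq B\bigl(x_\alpha^{m_k}, C^{-1}\delta^{m_k}\bigr)
    \subseteq Q_\alpha^{m_k}.
\]
Using \(\delta^{m_k} \le \theta^k\), the outer ball satisfies
\[
    Q_\alpha^{m_k}
    \subseteq B\bigl(x_\alpha^{m_k}, C\delta^{m_k}\bigr)
    \subseteq B\bigl(x_\alpha^{m_k}, (C/\delta)\,\theta^k\bigr).
\]
So property (c) holds with the new ball constant \(C' := C/\delta\), which is at least \(C\), and with scaling value \(\theta\).

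There is no real obstacle here. The only point needing care is the choice of rounding in \(m_k\), so that both comparisons hold with a constant independent of \(k\) while keeping \(m_0 = 0\) and monotonicity. I would also remark that connectedness (property (d)), if present, is inherited trivially, since the new cubes are among the old ones. Likewise the inner balls are the same balls as before, so being \cubedec in \(X\) is preserved as well.
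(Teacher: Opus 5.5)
Your construction is the paper's: \(m_k = \lfloor kr \rfloor\) with \(r = \log\theta/\log\delta\), and new ball constant \(C/\delta\). Properties (a), (b) and the remarks on (d) are fine. However, the comparison between \(\delta^{m_k}\) and \(\theta^k\) is derived in the wrong direction. Since \(0<\delta<1\), the map \(t \mapsto \delta^t\) is decreasing, so \(m_k \le kr < m_k+1\) gives
\[
    \theta^k = \delta^{kr} \le \delta^{m_k} < \delta^{kr-1} = \delta^{-1}\theta^k ,
\]
not \(\delta\theta^k < \delta^{m_k} \le \theta^k\). Your upper bound \(\delta^{m_k} \le \theta^k\) is false in general. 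For \(\delta = 1/4\) and \(\theta = 1/2\) one has \(r = 1/2\) and \(m_1 = 0\), so \(\delta^{m_1} = 1 > 1/2 = \theta\). Hence the step ``using \(\delta^{m_k} \le \theta^k\)'' in your outer-ball inclusion rests on a false inequality. The chain you wrote is what you would get from \(\lceil kr \rceil\), not \(\lfloor kr \rfloor\).

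The damage is local, because the constant \(C/\delta\) has enough slack. Your lower bound \(\delta\theta^k < \delta^{m_k}\) happens to be true, since it is weaker than \(\theta^k \le \delta^{m_k}\), so your inner inclusion stands. The outer inclusion should instead be deduced from \(\delta^{m_k} < \delta^{-1}\theta^k\). This gives \(C\delta^{m_k} < (C/\delta)\theta^k\), hence \(Q_\alpha^{m_k} \subseteq B\bigl(x_\alpha^{m_k}, (C/\delta)\theta^k\bigr)\), which is exactly the paper's argument.

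Alternatively, keep your chain and define \(m_k := \lceil kr \rceil\). This still has \(m_0 = 0\) and is nondecreasing. Then \(kr \le m_k < kr+1\) really does give \(\delta\theta^k < \delta^{m_k} \le \theta^k\), and the rest of your write-up goes through verbatim.
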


    \begin{proof}
        Let \(r := \dfrac{\log \theta}{\log \delta} > 0\)
        and set \(m_k := \lfloor k r \rfloor\).
        Then \(m_0 = 0\), and \((m_k)\) is nondecreasing since \(r > 0\).

        Properties (a), (b) of \cref{def: connectedCubes}
        for \(\{Q_\alpha ^ {m_k}\}\) follow immediately.

        For (c), since \(-1 < m_k - rk \le 0 \), we have
        \(\theta^k \le \delta^{m_k} < \delta^{-1} \theta^k\), 
        and combining this with (c) gives
        \[
            B(x_\alpha ^{m_k}, C^{-1} \theta^k)
            \subseteq
            Q_\alpha ^{m_k}
            \subseteq
            B(x_\alpha ^{m_k}, (C / \delta) \, \theta^k).
        \]
        Consequently,
        \(\{Q_\alpha ^{m_k}\}\) satisfies (c) with scaling value \(\theta\)
        and ball constant \(C / \delta\).
    \end{proof}

    Note that if the original dyadic cubes were connected,
    the renumbered family remains connected as well.
    We formulate a direct consequence of this result below.

    \begin{corollary} \label{cor: goodScaling}
        Suppose that \(Y\) is \cubedec in \(X\) 
        with scaling value \(\delta\).
        Then, it is \cubedec in \(X\) 
        for each scaling value.
    \end{corollary}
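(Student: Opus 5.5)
The plan is to apply \cref{prop: scaling} directly to the connected dyadic cube decomposition that witnesses that \(Y\) is \cubedec in \(X\). Let \(\{Q_\alpha^k\}\) be this decomposition, with ball constant \(C\) and scaling value \(\delta\), where the inner balls in property (c) are taken in \(X\). Fix an arbitrary target scaling value \(\theta \in (0,1)\). Take \(r = \log\theta/\log\delta\) and \(m_k = \lfloor kr \rfloor\) as in the proof of \cref{prop: scaling}, and consider the subfamily \(\{Q_\alpha^{m_k} \mid \alpha \in I_{m_k}\}_{k \in \N_0}\). I will check that this family is again a connected dyadic cube decomposition of \(Y\), with the inner balls still in \(X\).

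Properties (a) and (b) are inherited because each level of the new family is a level of the old one. Since \((m_k)\) is nondecreasing, nestedness between levels \(m_l \ge m_k\) follows from (b) of the original family. Property (d) holds because every new cube is an old cube and hence connected. If \(r<1\), some levels repeat, so \(m_{k+1} = m_k\). This is harmless: (a)--(d) never require a cube to have a proper child, and repeated levels are allowed by \cref{def: dyadicCubes}.

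For property (c), I will reuse the estimate \(\theta^k \le \delta^{m_k} < \delta^{-1}\theta^k\) from \cref{prop: scaling}. It gives
\[
B(x_\alpha^{m_k}, C^{-1}\theta^k) \subseteq B(x_\alpha^{m_k}, C^{-1}\delta^{m_k}) \subseteq Q_\alpha^{m_k} \subseteq B(x_\alpha^{m_k}, C\delta^{-1}\theta^k).
\]
The first two balls are taken in \(X\), exactly as in the hypothesis. So (c) holds in the strong sense required by \cref{def: cubeDecomp}, with ball constant \(C/\delta\).

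The only point needing care is whether the inner-ball inclusion survives when the radius shrinks from \(C^{-1}\delta^{m_k}\) to \(C^{-1}\theta^k\). It does, since this is a monotone inclusion of balls in the ambient space \(X\). No real obstacle is expected, and the argument amounts to remarking that the proof of \cref{prop: scaling} never leaves the original cubes or the original ambient balls.
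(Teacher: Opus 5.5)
Your proposal is correct and takes the same approach as the paper, which proves the corollary in one line by applying \cref{prop: scaling} to the witnessing family and noting beforehand that the subsampled family stays connected. Your explicit check that the inner balls remain balls in \(X\) is the detail the paper leaves implicit: it holds because \(C^{-1}\theta^k \le C^{-1}\delta^{m_k}\), so the new inner ball sits inside the old one in the ambient space.
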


    \begin{proof}
        It immediately follows from \cref{prop: scaling}.
    \end{proof}

    \medskip

    The following two simple technical 
    observations are taken from \cite{GL17}.

    \begin{proposition} [\cite{GL17}, Cor. 2] \label{prop: cubes}
        Suppose that \(\overline{Q}_\alpha ^ k \cap \overline{Q}_\beta ^ k \neq \emptyset\).
        Then,
        \[
            Q_\beta ^ k \subset B(x_\alpha ^ k, 3 C \delta^k).
        \]
    \end{proposition}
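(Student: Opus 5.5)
The plan is a triangle-inequality argument that uses only the outer inclusion of property (c) in \cref{def: dyadicCubes}, together with the fact that a ball's closure lies in the closed ball of the same radius.

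First, by property (c), \(Q_\alpha^k \subset B(x_\alpha^k, C\delta^k)\). Hence every point of \(\overline{Q}_\alpha^k\) lies at distance at most \(C\delta^k\) from \(x_\alpha^k\). The same reasoning gives that every point of \(\overline{Q}_\beta^k\) lies within distance \(C\delta^k\) of \(x_\beta^k\).

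Next, pick a point \(z \in \overline{Q}_\alpha^k \cap \overline{Q}_\beta^k\). Take any \(y \in Q_\beta^k\); then \(\rho(y, x_\beta^k) < C\delta^k\). The triangle inequality gives
\[
\rho(y, x_\alpha^k) \le \rho(y, x_\beta^k) + \rho(x_\beta^k, z) + \rho(z, x_\alpha^k) < C\delta^k + C\delta^k + C\delta^k = 3C\delta^k .
\]
The inequality is strict because the first term is strictly less than \(C\delta^k\). Therefore \(y \in B(x_\alpha^k, 3C\delta^k)\), and so \(Q_\beta^k \subset B(x_\alpha^k, 3C\delta^k)\).

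There is no real obstacle here. The only point needing care is strictness. The intermediate point \(z\) lies only in closures, so the two bounds through \(z\) are non-strict (\(\le C\delta^k\)). The bound for \(y\) itself is strict, since \(y\) is an actual point of \(Q_\beta^k\) and the ball in (c) is open. That strict term is what places \(y\) in the open ball of radius \(3C\delta^k\).
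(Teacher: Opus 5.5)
Your proof is correct. The paper does not prove this statement; it only cites \cite{GL17}. Your argument is the standard one behind that citation: a triangle inequality through a point \(z\) common to both closures, using only the outer inclusion of property (c). You also handle strictness correctly, since the bound \(\rho(y, x_\beta^k) < C\delta^k\) is strict while the two bounds through \(z\) are only \(\le C\delta^k\).
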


    \begin{proposition} [\cite{GL17}, Cor. 3] \label{prop: cutMass}
        Suppose that \(X\) is an Ahlfors--David regular metric measure space,
        and \(S \subset X\) is a measurable subset with finite measure.
        Then, for each \(0 \le t \le \mu(S) \) there exists
        \(T \subseteq S\) such that \(\mu(T) = t\).
    \end{proposition}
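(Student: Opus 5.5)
The plan is to use the standard intermediate-value argument for measures, with dyadic cubes supplying sets of controllable small measure. It is convenient to show first that the function \(s \mapsto \mu(S \cap E_s)\) varies continuously along a suitable increasing family \(E_s\) exhausting \(X\). The key observation is that an Ahlfors--David regular measure has no atoms: for every \(x \in X\) and every small \(r>0\), \(\mu(\{x\}) \le \mu(B(x,r)) \lesssim_X r^d \to 0\). Consequently the balls shrinking to a point carry arbitrarily small mass. This gives the local ingredient needed for a continuity argument.

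Concretely, I would fix a point \(x_0 \in X\) and set \(f(r) := \mu(S \cap \overline{B}(x_0, r))\) for \(r \ge 0\), using closed balls. This function is nondecreasing. It is right-continuous by continuity of measure from above, since \(\bigcap_{r'>r} \overline{B}(x_0,r') = \overline{B}(x_0,r)\) and \(\mu(S)<\infty\). At a point \(r>0\), the jump \(f(r) - f(r^-)\) equals \(\mu(S \cap \{y : \rho(x_0,y)=r\})\). Spheres can carry positive mass in a general metric space, so \(f\) alone may jump, and this is the main obstacle.

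To handle it, I would avoid a single radial parameter and use instead a transfinite or iterated bisection via dyadic cubes. By property (a) of \cref{def: dyadicCubes} and \eqref{eq: childUnion}, \(S\) splits at generation \(k\) into the pieces \(S \cap Q_\alpha^k\). By (c) together with Ahlfors--David regularity, each piece has measure \(\lesssim_X \delta^{kd}\). I would then build \(T\) greedily. Enumerate the cubes of generation \(1\) in \(\prec_1\) order and add whole pieces \(S\cap Q_\alpha^1\) while the total stays \(\le t\). When the next piece would overshoot, descend into its children (generation \(2\)) in \(\prec_2\) order and repeat. The deficit left after generation \(k\) is at most the mass of one generation-\(k\) piece, which is \(\lesssim \delta^{kd}\). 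For countably many cubes at a level, I would use the well-ordering and countable additivity; doubling with finite measure makes each level countable anyway. Writing \(T\) for the countable union of the chosen pieces, it is measurable, and \(t - \delta' \le \mu(T) \le t\) holds with \(\delta'\) arbitrarily small. Hence \(\mu(T)=t\) in the limit.

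The remaining care is to check that the nested choices are consistent, i.e.\ that pieces chosen at deeper generations lie inside the single overshooting parent and are disjoint from earlier choices. Property (b) of \cref{def: dyadicCubes} guarantees this. The endpoint cases \(t=0\) and \(t=\mu(S)\) are trivial.
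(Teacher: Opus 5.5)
Your argument is correct, and it supplies a proof that the paper itself omits: the paper only imports the statement from \cite{GL17}. The usual abstract route has two steps. First, Ahlfors--David regularity makes \(\mu\) non-atomic. Second, Sierpi\'nski's theorem says a non-atomic measure takes every value in \([0,\mu(S)]\) on subsets of \(S\). Your dyadic descent is in effect a concrete proof of Sierpi\'nski's theorem in this setting. At generation \(k\) you hold a partial total \(a_k \le t\) and a cube \(P_k\) with \(a_k + \mu(S\cap P_k) > t\), so \(0 \le t - a_k < \mu(P_k) \lesssim_X \delta^{kd}\). The chosen pieces are pairwise disjoint by properties (a)--(b) of the dyadic cubes, so countable additivity gives \(\mu(T) = \lim_k a_k = t\). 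What this buys is that the uniform smallness of a \emph{nested} partition makes the greedy procedure converge in countably many steps, with no transfinite or Zorn-type exhaustion. It also bypasses a subtlety in your opening remark: \(\mu(\{x\}) = 0\) for every \(x\) is not by itself the same as non-atomicity, and one needs separability to pass from one to the other.

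Three small points to tidy up. First, countability of a level should come from separability of the doubling space: the inner balls \(B(x^k_\alpha, C^{-1}\delta^k)\) are pairwise disjoint nonempty open sets. It should not come from finiteness of the measure, since only \(\mu(S)\), not \(\mu(X)\), is assumed finite. Below the starting generation each cube has only finitely many children, so the well-orderings \(\prec_k\) are unnecessary and any enumeration works. Second, you tacitly use that the cubes are measurable, so that \(\mu(S\cap Q^k_\alpha)\) makes sense; the paper assumes the same when it writes \(\mu(Q^k_\alpha)\). Third, the radial-function paragraph plays no role in the final argument and can be dropped.
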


%% file: chapters/algorithm.tex
\section{The decomposition algorithm} \label{sec: algorithm}

    In this section we prove \cref{thm: partitionConnectedIntro},
    which provides a technical condition on a
    metric space under which it admits a partition with
    prescribed measures.
    The construction is algorithmic.

    \begin{theorem} \label{thm: partitionConnected}
        Let \((X, \rho)\) be a connected metric space, which
        is \cubedec (\cref{def: cubeDecomp}).
        Suppose that \(\mu\) is a finite positive
        Borel measure on \(X\) that is Ahlfors--David regular
        of dimension \(d\).
        Then, there exists a constant \(C^* > 0\),
        depending only on \((X, \rho, \mu)\), such
        that the following holds.
        For all positive numbers
        \(\lambda_1, \dots, \lambda_n\) with
        \(\sum_{j=1}^n \lambda_j = \mu(X)\), there exist measurable subsets
        \(X_1, \dots, X_n \subset X\)
        such that \(\bigsqcup_{j=1}^n X_j = X, \, \mu(X_j)= \lambda_j,\)
        and each \(X_j\) is contained in a ball of radius
        \(C^* \lambda_j^{\frac{1}{d}}\).
    \end{theorem}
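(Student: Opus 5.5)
We process the pieces one at a time, largest first, and cut each one out of the tree of connected dyadic cubes with a filling procedure. Reorder so that \(\lambda_1 \ge \lambda_2 \ge \dots \ge \lambda_n\). Fix a connected dyadic cube decomposition \(Q_\alpha^k\) of \(X\) with constants \(C,\delta\). By \cref{cor: goodScaling} we may fix \(\delta\) as convenient. Ahlfors--David regularity together with property (c) gives
\[
\mu(Q_\alpha^k) \asymp_X \delta^{kd}.
\]
The number of children of a cube is therefore bounded by a constant \(K\) depending only on \(X\). This follows from comparing measures, or equivalently from \cref{prop: finite} and doubling.

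To each \(\lambda_j\) we attach the generation \(k_j\), chosen as the largest \(k\) with \(c\,\delta^{kd} \ge \lambda_j\), where \(c\) is the lower constant in \(\mu(Q^k_\alpha) \gtrsim \delta^{kd}\). Then every cube of generation \(k_j\) has measure at least \(\lambda_j\), and \(\delta^{k_j} \asymp_X \lambda_j^{1/d}\). The case where \(\lambda_j\) is comparable to \(\mu(X)\) is handled with \(k_j=0\): here \(X\) itself lies in a ball of radius \(\diam X \lesssim \lambda_j^{1/d}\).

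The algorithm maintains the remaining set \(R_j = X \setminus (X_1 \sqcup \dots \sqcup X_{j-1})\), and we keep the following invariant. \(R_j\) is a union of whole cubes of generation \(k_{j-1}\) (hence of generation \(k_j \ge k_{j-1}\)), plus at most one partially eaten cube per generation. Moreover, the cubes still present are ordered compatibly with the well-orderings \(\prec_k\), i.e.\ the pieces are removed in a depth-first (lexicographic) order on the tree. Concretely, we traverse the tree depth-first, using \(\prec_k\) on siblings. \(X_j\) is the union of the next consecutive \(\prec_{k_j}\)-run of remaining material, of total measure \(\lambda_j\). The last cube used is only partially taken, the exact amount being chosen by \cref{prop: cutMass}.

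Because pieces are taken in decreasing size and in depth-first order, the material consumed by \(X_j\) spans only a bounded number of cubes of generation \(k_j\). The run starts inside one generation-\(k_j\) cube, possibly one already partially consumed by earlier smaller-generation residue, and since each such cube has measure at least \(\lambda_j\), the run ends in that cube or in a later one. Depth-first order alone does not guarantee geometric proximity of consecutive cubes, however, and this is where connectedness enters.

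The key geometric step is this. Reorder the children of each cube (the well-orderings on siblings were arbitrary) so that consecutive siblings \(\eta_i,\eta_{i+1}\) have touching closures, \(\overline{Q}_{\eta_i} \cap \overline{Q}_{\eta_{i+1}} \ne \emptyset\). Connectedness of the parent cube (property (d)) together with \eqref{eq: closureChildUnion} makes the adjacency graph of the children connected, since a finite closed cover of a connected set has a connected nerve. Along a spanning-tree walk of this graph, a path visiting all \(K\) children with repetitions allowed, consecutive cubes touch, and repeated visits are skipped when eating.

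The run forming \(X_j\) then passes through a chain of generation-\(k_j\) cubes, including descendants of one cube straddling into the next, in which consecutive cubes touch. Its length is at most a constant (the run length is bounded by measure \(\lambda_j\), up to the one or two boundary cubes). By \cref{prop: cubes} iterated a bounded number of times, \(X_j\) lies in a ball of radius \(\lesssim \delta^{k_j} \lesssim \lambda_j^{1/d}\).

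I expect the main obstacle to be making this bounded-chain claim rigorous. Leftover partial cubes from earlier, finer generations must not scatter \(X_j\) across distant parts of the tree. The delicate point is that the depth-first order must respect adjacency at every generation simultaneously, including the transition between the last descendant of one cube and the first descendant of the next. I would first try to handle this by choosing the sibling orders so that each walk starts at a child touching the last child visited in the previous parent, then prove by induction on generations that the unconsumed part of the current generation-\(k_j\) cube is always a \(\prec\)-final segment. With this invariant the run covers at most two adjacent generation-\(k_j\) cubes, or, if the walk's path-with-repetitions construction is needed, a chain of at most \(2K\) of them.
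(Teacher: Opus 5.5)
\textbf{There is a genuine gap.} Your reduction is sound as far as it goes. If each $X_j$ is a consecutive run in a nested (depth-first) order, and $\lambda_j$ is at most the mass of every generation-$k_j$ cube, then $X_j$ meets at most two generation-$k_j$ cubes, and they are consecutive in the order. The step you flag as the main obstacle is that consecutive cubes can be made adjacent, or boundedly close, at every generation simultaneously. This is not just unproved: it is false in general, whatever cubes and sibling orders you fix in advance.

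Here is a counterexample. Take the tripod $X$: three unit segments glued at a point $o$, with the path metric and length measure. It is a finite connected simplicial complex, hence cube-decomposable by \cref{lem: complexCubeDecomp}, and it is Ahlfors--David regular of dimension $1$.
For $k$ large, the generation-$k$ cube containing the tip $a_i$ of the $i$-th leg is an interval $T_i$ with $\overline{T_i}=[p_i,a_i]$ and $o\notin\overline{T_i}$. Every path from $T_i$ to $X\setminus T_i$ passes through $p_i$.
For some $l>k$, $T_i$ has at least two generation-$l$ descendants. These are intervals forming a contiguous block of your order, and only one of them has $p_i$ in its closure. So the first or the last cube of the block, call it $F$, satisfies $\dist(F,X\setminus T_i)\ge \dist(F,p_i)=:\varepsilon_0>0$.
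There are three tips but only two extreme positions, so some $T_i$ is neither first nor last at generation $k$. Hence your order has a point where material of $F$ is immediately adjacent to material outside $T_i$. Let $c\in(0,\mu(X))$ be the mass preceding that point, and take $\varepsilon<\mu(F)$ small.
The admissible decreasing data $\lambda_1=c-\varepsilon$, $\lambda_2=2\varepsilon$, and $\lambda_j\le 2\varepsilon$ for $j\ge 3$ force $X_2$ to carry mass $\varepsilon$ on each side of that point. So $\diam X_2\ge\varepsilon_0$ while $\lambda_2\to 0$.

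A lesser problem: within one parent, ``consecutive siblings touch'' already fails once repeated visits of the walk are skipped, for example when the children's adjacency graph is a star.

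\textbf{What is missing.} The place where each piece is cut has to be chosen adaptively from the current state, not read off a fixed linear order. The paper builds oriented spanning trees $T_k$ of the adjacency graphs $G_k$ compatibly across generations. Each edge $\alpha'\to\beta'$ of $T_k$ is lifted to an edge between nested children. The nested children of $\beta'$ are spanned by a subtree rooted at the lifted endpoint; connectedness of the cubes is used exactly to make these sibling graphs connected.
The paper then maintains the invariant that the already-cut region $V$ is $k_j$-legal: at each generation $l\ge k_j$, every cube carrying positive mass of $V$ has all its $T_l$-children full.
The piece $X_j$ is poured into an empty generation-$k_j$ cube $\alpha$ whose $T_{k_j}$-children are all ripe or full. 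The mass goes first into those children, recursively through ripe nested children, and then into $\alpha$ itself. Thus $X_j$ lies in $Q_\alpha^{k_j}$ together with the cubes $Q_\beta^{k_j}$ for $G_{k_j}$-neighbours $\beta$ of $\alpha$. By \cref{prop: cubes}, this union lies in $B(x_\alpha^{k_j},3C\delta^{k_j})$.
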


    We first observe that the estimate on the
    diameters of \(X_j\) is sharp up to a constant,
    as announced in the introduction.

    \begin{corollary} \label{cor: diameter}
        In the notation of
        \cref{thm: partitionConnected}, 
        \(\diam X_j \asymp_X \lambda_j ^ {1/d} \).
    \end{corollary}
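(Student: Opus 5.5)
The upper bound is immediate from \cref{thm: partitionConnected}: each \(X_j\) lies in a ball of radius \(C^* \lambda_j^{1/d}\), so \(\diam X_j \le 2C^* \lambda_j^{1/d}\). For the lower bound I will use only Ahlfors--David regularity, namely that a set of small diameter cannot carry large measure. The plan is to fix \(j\), set \(r := \diam X_j\), choose a point \(x \in X_j\) (note \(X_j \neq \emptyset\), since \(\mu(X_j) = \lambda_j > 0\)), and observe that \(X_j \subset \overline{B}(x, r) \subset B(x, 2r)\).

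The main case is \(0 < 2r \le \diam X\). There \cref{def: AhlRegSpace} gives
\[
    \lambda_j = \mu(X_j) \le \mu(B(x, 2r)) \lesssim_X (2r)^d,
\]
hence \(r \gtrsim_X \lambda_j^{1/d}\), with a constant depending only on the regularity constant and \(d\). If instead \(2r > \diam X\), then \(r > \diam X/2\). Since \(\lambda_j \le \mu(X)\), it suffices to show \(\diam X \gtrsim_X \mu(X)^{1/d}\). This holds because \(X\) is finite in measure and Ahlfors--David regular: taking \(r_0 = \diam X\) in the definition, \(\mu(X) \ge \mu(B(x,\diam X)) \asymp \diam X^d\). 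Conversely, \(X \subset \overline{B}(x,\diam X)\), and regularity at radii up to \(\diam X\), together with doubling, gives \(\mu(X) \lesssim \diam X^d\). It follows that \(\lambda_j^{1/d} \le \mu(X)^{1/d} \lesssim_X \diam X < 2r\).

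The only delicate point is the degenerate possibility \(r = 0\), i.e.\ \(X_j\) is a single point of positive measure \(\lambda_j\). I plan to rule this out by letting the radius shrink: for every \(s \in (0, \diam X]\) we have \(\lambda_j \le \mu(B(x,s)) \lesssim s^d\), and letting \(s \to 0\) forces \(\lambda_j = 0\), a contradiction. The case \(\diam X = 0\) cannot occur, since then \(X\) would be a single point, and the same argument shows points have measure zero whenever \(\diam X > 0\); I will assume the nondegenerate setting \(\diam X > 0\) implicit in the definition. Combining the two cases yields \(\lambda_j^{1/d} \lesssim_X \diam X_j \lesssim_X \lambda_j^{1/d}\), with constants independent of \(n\) and of the \(\lambda_j\). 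I do not expect any real obstacle here: the main care needed is to keep the radius within the range \(r \le \diam X\) where \cref{def: AhlRegSpace} applies, and to check that the constant obtained in the large-diameter case depends only on \(X\).
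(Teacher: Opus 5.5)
Your proof is correct and follows the paper's route. The upper bound comes from the ball of radius \(C^*\lambda_j^{1/d}\), and the lower bound from \(X_j \subset B(x, 2\diam X_j)\) together with Ahlfors--David regularity. Your separate handling of \(2\diam X_j > \diam X\) (radii outside the range of \cref{def: AhlRegSpace}) and of \(\diam X_j = 0\) (where \(B(x,0)=\emptyset\)) fills gaps that the paper's one-line proof passes over. Your assumption \(\diam X > 0\) is genuinely needed rather than automatic as your wording suggests: a one-point space with a Dirac mass meets all hypotheses vacuously and violates the conclusion.
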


    \begin{proof}
        Observe that \(X_j \subset B(x, 2\diam X_j)\) 
        for some \(x \in X_j\).
        Consequently, since the measure is Ahlfors--David regular,
        \[ 
            \lambda_j = \mu(X_j)
            \lesssim_X
            (\diam X_j )^d.
        \]
        Hence, \(\diam X_j \gtrsim_X \lambda_j ^ {\frac{1}{d}}\).
        Since the weighted decomposition gives the upper bound
        \(\diam X_j \lesssim_X \lambda_j ^ {\frac{1}{d}}\),
        the statement follows.
    \end{proof}

    We now prove the main theorem.

    \begin{proof} [Proof of \cref{thm: partitionConnected}]
        Since \(\mu\) is Ahlfors--David regular, \(\mu(X) > 0\).
        Dividing the numbers \(\lambda_j\) by \(\mu(X)\), we
        may assume that \(\mu(X) = 1\).
        Fix some family of connected dyadic cubes \(Q_\alpha ^ k \)
        with ball constant \(C\).
        Since \(\mu(X) < \infty\) and \(\mu\) is Ahlfors--David regular,
        \(X\) is bounded.
        Without loss of generality, we may therefore assume that
        \(|I_0| = 1\), taking \(Q_\alpha ^ 0 := X\) and enlarging the
        ball constant to be greater than \(\diam X\) if necessary.
        Denote by \(\overline{Q}_\alpha ^ k\) the closure of \(Q_\alpha ^ k\).
        Let \(M_\alpha ^ k := \mu(Q_\alpha ^ k) \).
        Using property (c) of \cref{def: dyadicCubes} and
        Ahlfors--David regularity, we obtain
        \[
            M_\alpha ^ k
            =
            \mu(Q_\alpha ^ k ) 
            \ge 
            \mu(B(x_\alpha ^ k, C^{-1} \delta^k))
            \gtrsim_X
            (C^{-1} \delta ^ k)^d
            \gtrsim_X
            \delta^{kd}
        \]
        for each \(\alpha \in I_k\), and \(M_\alpha ^ k \lesssim_X \delta^{kd}\)
        as well.
        In other words, 
        \(s_k := a_0 \delta^{kd} \le M_\alpha ^ k \le a_1 \delta^{kd} =: S_k \) 
        for some 
        \(a_0, a_1 > 0\), which depend only on \(X\). 
        Since \(\mu(X) = 1\), it follows
        that \(I_k\) is finite.

        \smallskip

        We create a graph \(G_k\) with the vertices \(V(G_k) := I_k\)
        associated with the cubes \(Q_\alpha ^ k \).
        We say that \((\alpha, \beta) \in E(G_k) \) if
        \(\overline{Q}_\alpha ^ k \cap \overline{Q}_\beta ^ k \neq \emptyset \).

        \smallskip

        Let \(\sigma \in I_k\) and \(l \ge k\).
        Denote by \(G_{l, \sigma}\) the subgraph of \(G_l\) on
        the vertices of \(I_{l, \sigma}\).

        \begin{proposition} \label{prop: connected}
            For every \(k \ge 1 \) and \(\sigma \in I_{k - 1}\), 
            the graph \(G_{k, \sigma}\) is connected.
        \end{proposition}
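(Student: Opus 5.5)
We argue by contradiction, using the connectedness of the parent cube \(Q_\sigma^{k-1}\) given by property (d) of \cref{def: connectedCubes}. Suppose \(G_{k,\sigma}\) is disconnected. Then its vertex set splits as \(I_{k,\sigma} = A \sqcup B\) with both \(A, B\) nonempty and no edge of \(G_k\) joining a vertex of \(A\) to a vertex of \(B\). By the definition of \(E(G_k)\), this means \(\overline{Q}_\alpha^k \cap \overline{Q}_\beta^k = \emptyset\) for all \(\alpha \in A\), \(\beta \in B\).

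We then set
\[
    U := \bigcup_{\alpha \in A} Q_\alpha^k,
    \qquad
    V := \bigcup_{\beta \in B} Q_\beta^k .
\]
By \eqref{eq: childUnion} with \(l = k\), we have \(Q_\sigma^{k-1} = U \sqcup V\), and both sets are nonempty because cubes contain balls by property (c). Since \(I_k\) is finite (shown above), the unions are finite, and closure commutes with them. Hence \(\overline{U} = \bigcup_{\alpha \in A} \overline{Q}_\alpha^k\) and \(\overline{V} = \bigcup_{\beta \in B} \overline{Q}_\beta^k\). Alternatively, \cref{prop: finite} gives the same conclusion via local finiteness.

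From the pairwise disjointness of the closures we obtain \(\overline{U} \cap \overline{V} = \emptyset\). In particular \(\overline{U} \cap V = \emptyset\) and \(U \cap \overline{V} = \emptyset\), so \(U\) and \(V\) are separated sets. Their union \(Q_\sigma^{k-1}\) is therefore disconnected: relative to \(Q_\sigma^{k-1}\), \(U = Q_\sigma^{k-1} \cap \overline{U}\) is relatively closed, and so is \(V\), making both clopen. This contradicts property (d), so \(G_{k,\sigma}\) is connected.

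The only delicate step is making sure the closures of the finite unions behave as claimed. This is exactly what finiteness of \(I_k\) (or \cref{prop: finite}) provides. Note also that closures are taken in \(X\), but relative closedness in \(Q_\sigma^{k-1}\) follows from intersecting with \(Q_\sigma^{k-1}\), so no issue arises.
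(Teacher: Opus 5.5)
Your proof is correct and follows essentially the same argument as the paper. Both split \(I_{k,\sigma}\) into two parts with no edges between them, use finiteness of the unions to control closures, and contradict property (d). The only difference is cosmetic: the paper splits the closure \(\overline{Q}_\sigma^{k-1}\) into two disjoint nonempty closed sets via \eqref{eq: closureChildUnion} and uses that the closure of a connected set is connected, whereas you show directly that \(Q_\sigma^{k-1}\) is the union of two nonempty separated sets.
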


        \begin{proof}
            Suppose that \(G_{k, \sigma}\) is disconnected:
            \(I_{k, \sigma} = I ^ 1 \sqcup I ^ 2\), both nonempty,
            with no edges between \(I ^ 1\) and \(I ^ 2\). Set
            \(Y_1 = \bigcup_{\eta \in I ^ 1} \overline{Q}_\eta ^ k\) and
            \(Y_2 = \bigcup_{\eta \in I ^ 2} \overline{Q}_\eta ^ k\). By assumption,
            \(Y_1 \cap Y_2 = \emptyset\).
            Since these unions are finite, both sets are closed.
            Hence, \(\overline{Q}_\sigma ^ {k-1} = Y_1 \sqcup Y_2\) by
            \eqref{eq: closureChildUnion}, splitting it into two disjoint
            nonempty closed sets --- contradicting the fact that
            \(Q_\sigma ^ {k-1}\), and hence its closure, is connected.
        \end{proof}

        We now construct, recursively, the oriented spanning tree \(T_k\)
        of \(G_k\), such that all edges of \(T_k\) are oriented away from
        a single root vertex.

        The tree \(T_0\) consists of the single vertex of \(I_0\).
        Suppose that \(T_k\) is already
        constructed.
        Consider some \(\beta' \in I_k\).
        If it is not the root of the tree, take the unique edge
        \(\alpha' \to \beta'\) in \(T_k\).
        Consider all pairs \(\alpha, \beta \in I_{k + 1}\)
        such that \(\alpha \subset \alpha'\) and \(\beta \subset \beta'\).
        At least one such pair belongs to \(E(G_{k+1})\): take
        \(x \in \overline{Q}_{\alpha'} ^ k \cap \overline{Q}_{\beta'} ^ k\). 
        By \eqref{eq: closureChildUnion}, \(x\) lies in
        \(\overline{Q}_\alpha ^ {k+1} \cap \overline{Q}_\beta ^ {k+1}\) for
        some such \(\alpha, \beta\).
        We take one such pair and orient \((\alpha \to \beta)\).
        Then, we choose a spanning tree of \(G_{k+1, \beta'}\) (it exists by
        \cref{prop: connected}) and orient it away from the root \(\beta\).
        If \(\beta'\) is the root of \(T_k\), we instead choose a spanning
        tree of \(G_{k+1, \beta'}\) and orient it away from an arbitrary root.

        \input{figures/GkTk.tex}

        \smallskip

        We say that \(\beta\) is a \textbf{child} of
        \(\alpha\), and \(\alpha\) is a \textbf{parent} of
        \(\beta\) in \(T_k\) if the
        edge \((\alpha, \beta) \in E(T_k)\)
        is oriented \(\alpha \to \beta\).
        If there exists a directed path
        \(\alpha = \gamma_0 \to \dots \to \gamma_m = \beta\) in \(T_k\),
        we call \(\alpha\) an \textbf{ancestor} of \(\beta\)
        and \(\beta\) a \textbf{descendant} of \(\alpha\) in \(T_k\).

        \begin{remark}
            This is not related to the \textbf{nested} child/parent
            relation of \cref{sec: dyadicCubes}; we always write ``nested'' when
            we mean that one.
        \end{remark}

        \smallskip

        One may think of \(G_k\) as a system of connected containers, which we shall
        fill with water. We will give some intuitive definitions.

        Take \(K\) such that \(S_K \le \min_j \lambda_j \).
        We shall consider only generations of cubes
        with \(k \le K \).
        Consider some \textbf{configuration}: a measurable subset \(V \subset X \).
        We denote \(m_\alpha ^ k := \mu(V \cap Q_\alpha ^ k) \) and
        \(r_\alpha ^ k := M_\alpha ^ k - m_\alpha ^ k \) for all \(\alpha \in I_k \).
        These numbers represent the amount of water collected
        in the cubes and the volume of the remaining space, respectively.
        Since \(Q_\alpha ^ k = \bigsqcup_{\beta \in I_{l, \alpha}} Q_\beta ^ l\)
        for \(l \ge k\) by \eqref{eq: childUnion},
        we infer the following properties:
        \begin{equation} \label{eq: additive}
            M_\alpha ^ k =
            \sum_{\beta \in I_{l, \alpha}} M_\beta ^ l,
            \qquad
            m_\alpha ^ k =
            \sum_{\beta \in I_{l, \alpha}} m_\beta ^ l,
            \qquad
            r_\alpha ^ k =
            \sum_{\beta \in I_{l, \alpha}} r_\beta ^ l.
        \end{equation}
        The vertex \(\alpha \in I_k\) is:
        \begin{itemize}
            \item \textbf{empty} if \(m_\alpha ^ k = 0\);
            \item \textbf{full} if \(r_\alpha ^ k = 0\).
        \end{itemize}
        Since \(M_\alpha ^ k \ge s_k > 0\), no vertex is both empty and full.
        Obviously, when the configuration grows, nonempty vertices stay nonempty
        and full ones stay full.
        By \eqref{eq: additive}, a vertex is full (respectively, empty)
        if and only if all of its nested children are.
        We shall also say that \(\alpha\) is \textbf{ripe} if it is not full, but its
        children in \(T_k\) are.
        Suppose that \(0 \le k \le K\).
        The configuration \(V\) is called \(k\)-\textbf{legal} if every nonempty
        cube of generation \(l\), where \(k \le l \le K\),
        has all full children in \(T_l\).
        Note that if the configuration is \(k\)-legal, it is also \(l\)-legal
        for all \(k \le l \le K\).
        Moreover, in this case each cube of generation \(l\), where
        \(k \le l \le K\), is empty, full or ripe (a nonempty cube has all
        full children automatically).
        The fact that the configuration is \(k\)-legal means that the water,
        considered in \(T_k, \dots, T_K\), is in
        a state of hydrostatic equilibrium.
        
        \input{figures/configuration.tex}

        Let us formulate a simple observation about the ripe vertices.

        \begin{proposition} \label{prop: ripeNestChild}
            Suppose that \(k < K\).
            Then, for each ripe \(\sigma \in I_k\), there exists a ripe
            \(\eta \in I_{k + 1}\), such that \(\eta \subset \sigma\).
        \end{proposition}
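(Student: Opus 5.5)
The plan is to work entirely inside the part of \(T_{k+1}\) lying over \(\sigma\), and to pick the deepest non-full vertex there. No legality assumption on the configuration should be needed; only the definitions of ripe and full and the recursive construction of \(T_{k+1}\) are used.

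\textbf{Step 1: a non-full nested child exists.} Since \(\sigma\) is ripe, it is not full, so \(r_\sigma^k > 0\). By \eqref{eq: additive}, \(r_\sigma^k = \sum_{\eta \in I_{k+1,\sigma}} r_\eta^{k+1}\). Hence some \(\eta \in I_{k+1,\sigma}\) is not full.

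\textbf{Step 2: the children of a vertex over \(\sigma\).} By construction, the edges of \(T_{k+1}\) with both ends in \(I_{k+1,\sigma}\) form a spanning tree of \(G_{k+1,\sigma}\), oriented away from some root \(\beta_\sigma\). Every other edge of \(T_{k+1}\) has the form \(\alpha \to \beta\), where \(\alpha \subset \alpha'\), \(\beta \subset \beta'\), \(\alpha' \to \beta'\) is an edge of \(T_k\), and \(\beta\) is the root of the subtree over \(\beta'\). So for \(\eta \in I_{k+1,\sigma}\), each child of \(\eta\) in \(T_{k+1}\) is one of two kinds:
\begin{itemize}
    \item[(i)] it lies in \(I_{k+1,\sigma}\) and is a child of \(\eta\) in the subtree over \(\sigma\);
    \item[(ii)] it is a nested child of some \(\tau \in I_k\) with \(\sigma \to \tau\) in \(T_k\).
\end{itemize}
This bookkeeping requires carefully re-reading the recursive definition of \(T_{k+1}\). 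I expect it to be the main point requiring care.

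\textbf{Step 3: choose the deepest non-full vertex.} Among the non-full vertices of \(I_{k+1,\sigma}\), which form a nonempty finite set by Step 1, choose \(\eta\) of maximal depth in the subtree rooted at \(\beta_\sigma\). We check that every child of \(\eta\) in \(T_{k+1}\) is full, using the split from Step 2.
\begin{itemize}
    \item[(i)] A child inside \(I_{k+1,\sigma}\) has strictly greater depth, so it is full by the maximality of \(\eta\).
    \item[(ii)] A child that is a nested child of some \(\tau\) with \(\sigma \to \tau\) in \(T_k\) is handled as follows. Since \(\sigma\) is ripe, \(\tau\) is full. A vertex is full if and only if all its nested children are, by \eqref{eq: additive}, so the child is full.
\end{itemize}

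\textbf{Conclusion.} Thus \(\eta\) is not full and all its children in \(T_{k+1}\) are full, so \(\eta\) is ripe, and \(\eta \subset \sigma\) as required.
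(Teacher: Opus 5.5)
Your proof is correct and is essentially the paper's argument. A nonfull nested child exists by \eqref{eq: additive}, and you pass to a nonfull vertex of the spanning tree of \(G_{k+1,\sigma}\) that has no nonfull children inside that tree; you take one of maximal depth, while the paper descends along the tree, which amounts to the same thing. Any child of \(\eta\) outside \(I_{k+1,\sigma}\) is a nested child of some \(\tau\) with \(\sigma \to \tau\) in \(T_k\); such a \(\tau\) is full because \(\sigma\) is ripe, so that child is full too. Your Step 2 classification of the edges of \(T_{k+1}\) simply spells out what the paper cites as ``by construction of \(T_{k+1}\)''.
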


        \begin{proof}
            Since \(\sigma\) is not full, by \eqref{eq: additive} there
            exists some nonfull
            nested child of \(\sigma\). Descending from it in the finite
            spanning tree of \(G_{k + 1, \sigma}\), we find, in this tree,
            such a child without any nonfull children; denote it \(\eta\).
            Suppose that \(\zeta\) is a child of \(\eta\).
            If \(\zeta \subset \sigma\), it is full due to the choice of \(\eta\).
            Otherwise, consider its nested parent \(\zeta'\).
            By construction of \(T_{k + 1}\), \( \sigma \to \zeta'\) in
            \(T_k\) and \(\zeta'\) is full. Hence \(\zeta\) is full too
            by \eqref{eq: additive}.
            As a result, \(\eta\) is ripe.
        \end{proof}

        \smallskip

        Now, we define the important procedure of the algorithm that we call
        \textit{saturation}.
        Fix a system with the current configuration \(V\).
        The saturation of mass \(t \ge 0\) may be applied to a
        ripe or full vertex \(\alpha \in I_k\).
        We denote it \(\mathcal{S}(\alpha, t) \). The result of this procedure
        is the set \(W \subset X \setminus V\). 
        Intuitively, this is the set that got sealed off by the inserted water.
        The new configuration \(V' := V \sqcup W \) is constructed automatically.
        We define the saturation by downward induction on the level \(k\).

        \begin{itemize}

            \item[(1)] Firstly, we define the saturation at a ripe (or full)
            \(\eta \in I_K \). The saturation of the mass \(t \) applied to \(\eta\) 
            means taking the subset \(W \subset Q_\eta ^ K \setminus V \) such that
            \[
                \mu(W) = \min(r_\eta ^ K, t).
            \]
            That is possible due to \cref{prop: cutMass}. 
            If \(r_\eta ^ K \le t \), we take
            \(W = Q_\eta ^ K \setminus V \) exactly.
            We set \(\mathcal{S}(\eta, t) := W \).

            \item[(2)] Now consider a ripe (or full) \(\alpha \in I_k\)
            with arbitrary \(0 \le k \le K\).
            If \(k = K\), apply step (1). Suppose that \(k < K\).
            We want to define \(\mathcal{S}(\alpha, t) \).
            Set \(W := \emptyset\) and \(z := t\).
            While \(z > 0\) and \(\alpha\) is not full, 
            take some ripe nested child
            \(\beta \in I_{k+1}\), \(\beta \subset \alpha\).
            Such a child exists due to \cref{prop: ripeNestChild},
            since all children of \(\alpha\) remain full during the 
            procedure, so \(\alpha\) remains ripe.
            Then, execute
            \[
                W_\beta := \mathcal{S}(\beta, z),
            \]
            and take \(W := W \sqcup W_\beta\) and \(z := z - \mu(W_\beta)\).
            We return \(W\).
        \end{itemize}

        \input{figures/saturation.tex}

        Let us formulate some simple properties of the saturation.

        \begin{proposition} \label{prop: saturation}
            Suppose that \(V\) is \(k\)-legal, \(\alpha \in I_k\)
            is ripe or full,
            \(k \le K\), \(t \ge 0\), and
            \(\mathcal{S}(\alpha, t) = W\). Then:
            \begin{itemize}
                \item [(a)] \(W \subset Q_\alpha ^ k \setminus V\);
                \item [(b)] \(\mu(W) = \min(r_\alpha ^ k, t)\);
                \item [(c)] \(V \sqcup W\) is \(k\)-legal.
            \end{itemize}
        \end{proposition}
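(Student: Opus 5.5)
We argue by downward induction on the level \(k\), from \(k = K\) down to the given level, proving (a)--(c) simultaneously. The induction should also show that the procedure in step (2) is well defined and terminates after finitely many calls. Throughout we use that, for \(l \ge k\), a \(k\)-legal configuration is \(l\)-legal, together with the additivity relations \eqref{eq: additive}.

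\emph{Base case \(k = K\).} Here \(W \subset Q_\eta^K \setminus V\) with \(\mu(W) = \min(r_\eta^K, t)\) by construction, using \cref{prop: cutMass}; this gives (a) and (b). For (c), note that \(K\)-legality concerns only generation \(K\). The only cube whose status changes is \(\eta\), which may become nonempty. We must check that all children of \(\eta\) in \(T_K\) are full. This holds because \(\eta\) was ripe or full with respect to \(V\), and full vertices remain full as the configuration grows. Every other cube of generation \(K\) keeps its mass, so its status is unchanged.

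\emph{Inductive step \(k < K\).} Assume the claim at level \(k+1\) for every \((k+1)\)-legal configuration. We track an invariant along the loop: the current configuration \(V \sqcup W\) is \(k\)-legal, and \(W \subset Q_\alpha^k \setminus V\).

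At each iteration, \(\alpha\) is not full, and its \(T_k\)-children are still full. Hence \(\alpha\) is ripe, and \cref{prop: ripeNestChild} supplies a ripe nested child \(\beta\). The current configuration is \((k+1)\)-legal, so the inductive hypothesis applies to \(\mathcal{S}(\beta, z)\). This gives \(W_\beta \subset Q_\beta^{k+1} \setminus (V \sqcup W) \subset Q_\alpha^k\), which preserves (a). It also gives \(\mu(W_\beta) = \min(r_\beta^{k+1}, z)\), and the new configuration is \((k+1)\)-legal.

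\emph{Termination and (b).} Each iteration either sets \(z = 0\) or makes \(\beta\) full. In the latter case \(\beta\) can never be chosen again, because full cubes are not ripe. Since \(I_{k+1,\alpha}\) is finite, the loop ends after at most \(|I_{k+1,\alpha}|+1\) iterations. It ends either with \(z = 0\), in which case \(\mu(W) = t\), or with \(\alpha\) full, in which case \(\mu(W) = r_\alpha^k\). In both cases the total satisfies \(\mu(W) \le r_\alpha^k\), so \(\mu(W) = \min(r_\alpha^k, t)\).

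\emph{Main obstacle: upgrading to \(k\)-legality.} After each call we know \((k+1)\)-legality, and generations \(l > k\) are covered by it. So it remains to check generation \(k\): every nonempty cube there must have all its \(T_k\)-children full. Only \(\alpha\) changes at generation \(k\). By additivity, every other cube of generation \(k\) keeps both its mass and its remaining volume, so its status is unchanged. For \(\alpha\) itself, which may become nonempty, its \(T_k\)-children were full before the call and stay full. Hence generation \(k\) is fine, and the invariant is preserved.

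A subtle point is that \((k+1)\)-legality might seem to require a condition linking \(\beta\) to cubes outside \(Q_\alpha^k\). This is exactly what the ripeness of \(\beta\) in \(T_{k+1}\) supplies, via the construction in \cref{prop: ripeNestChild}. By the inductive hypothesis this condition is handled inside the recursive call.
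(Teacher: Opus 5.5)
Your proposal is correct and follows essentially the same proof as the paper. Both argue by induction on the recursive definition of \(\mathcal{S}\) (downward in the level), get termination from the fact that any call leaving positive mass fills its nested child, derive (b) from the two stopping conditions of the loop together with (a), and obtain (c) by noting that only \(\alpha\) changes status at level \(k\) while the deeper levels are handled by the recursive calls. Your explicit invariant, keeping intermediate configurations \(k\)-legal, makes visible why \(\alpha\) stays ripe so that \cref{prop: ripeNestChild} can be reapplied at each iteration; the paper leaves that point implicit.
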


        \begin{proof}
            Induction on the definition of \(\mathcal{S}\).

            In case (1) everything is immediate from the construction:
            \(\mu(W) = \min(r_\eta ^ K, t)\) gives (b), and the set \(W\)
            lies in \(Q_\eta ^ K \setminus V\), which gives (a).
            Since we changed only a ripe (or full)
            vertex, the configuration stays \(K\)-legal.

            In case (2), \(W\) is a disjoint union of
            pieces \(W_\beta\) from recursive calls \(\mathcal{S}(\beta, \cdot)\),
            with \(\beta \subset \alpha\).
            Each intermediate configuration is \((k + 1)\)-legal by induction,
            so the proposition applies to every recursive call.
            Note that an iteration leaving a positive remaining mass has
            \(\mu(W_\beta) = \min(r_\beta ^ {k + 1}, z) = r_\beta ^ {k + 1}\)
            by (b), so \(\beta\) becomes full, and it is never chosen again.
            Thus, the loop terminates.
            By induction,
            \(W_\beta \subset Q_\beta ^ {k + 1} \setminus V \subset
            Q_\alpha ^ k \setminus V\) and (a) follows.
            If the process finished with \(z = t_0\), the construction gives
            \(\mu(W) = \sum_\beta \mu(W_\beta) = t - t_0\).
            If \(t_0 = 0\), then \(\mu(W) = t\), and \(t \le r_\alpha ^ k\)
            by (a).
            Otherwise, the loop stops when
            \(\alpha\) is full. Hence \(\mu(W) = r_\alpha ^ k = t - t_0 \le t\),
            which proves (b).
            To check (c), observe that (a) gives \(W \subset Q_\alpha ^ k\).
            Therefore, on level \(k\) the procedure changes the status of
            \(\alpha\) only, and \(\alpha\) is either ripe or full,
            so the legality condition on level \(k\) is satisfied.
            Induction gives that all recursive calls keep the configuration
            \((k + 1)\)-legal. Consequently, the result is \(k\)-legal.
        \end{proof}

        In terms of our hydrostatic model, we just add a portion of water
        with mass \(t\) to the vertex \(\alpha\) and allow it to leak down.

        We will also introduce the notion of the \textit{canonical saturation}
        of mass \(w\) or \(\mathcal{CS}(w) \).
        Assume that \(S_K \le w \le \mu(X \setminus V) \)
        (there is enough space left, and the mass is not too small).
        Take the maximal \(k \in \N_0\) such that \(w \le s_k \), and \(k = 0\)
        if there is no such \(k\).
        Since \(w \ge S_K > s_{K + 1}\), we have \(k \le K\).
        Suppose that the configuration is \(k\)-legal.

        Consider the following
        dyadic cube \(\alpha \in I_k\): if there are any empty cubes
        in \(I_k\), take one without empty children in \(T_k\)
        (descending in \(T_k\)).
        Otherwise, take the root of \(T_k\).
        In either case, all children of \(\alpha\) in \(T_k\) are 
        ripe or full.
        Now execute the following sequence of saturations.
        Put \(z := w\) and \(W_0 := \emptyset\).
        While \(z > 0\) and \(\alpha\) has a nonfull child in \(T_k\),
        choose such a child \(\beta\), let \(W_\beta := \mathcal{S}(\beta, z) \),
        and take \(W_0 := W_0 \sqcup W_\beta \) and \(z := z - \mu(W_\beta)\).
        Since each saturation makes \(\beta\) full or \(z = 0\),
        no child can be picked twice, so the loop finishes.
        Consider both possible endings.
        \begin{itemize}
            \item [(1)] The loop stops since \(z = 0\).
            Then, put \(W := W_0\).
            \item [(2)] We stop since all children
            are full, hence \(\alpha\) is ripe (or full) and we
            may apply saturation to it.
            Put \(W := W_0 \sqcup W_\alpha\),
            where \(W_\alpha := \mathcal{S}(\alpha, z)\).
        \end{itemize}
        
        Finally, we set \(\mathcal{CS}(w) := W\).

        \input{figures/canonSaturation.tex}

        \begin{lemma} \label{lem: main}
            Let \(r := w^{\frac{1}{d}}\). 
            In the notation above we have:
            \begin{itemize}
                \item [(a)] \(V \sqcup W\) is \(k\)-legal;
                \item [(b)] \(\mu(W) = w\);
                \item [(c)] there exist a constant \(C^* > 0\),
                depending only on \(X, \rho, \mu\), and a point
                \(x \in X\) such that \( W \subset B(x, C^* r)\).
            \end{itemize}
        \end{lemma}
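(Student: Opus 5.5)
The proof goes through the three items in order. Parts (a) and (b) follow from \cref{prop: saturation} plus bookkeeping on the choice of \(\alpha\). Part (c) is a localisation argument using \cref{prop: cubes} together with the maximality of \(k\).

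\emph{Part (a).} The starting configuration \(V\) is \(k\)-legal. Every saturation in the loop is applied to a child \(\beta\) of \(\alpha\) in \(T_k\), and \(\beta\) is ripe or full. This is because \(V\) is \(k\)-legal, so a nonempty cube of generation \(k\) has all full children and is therefore ripe or full; and if \(\alpha\) was chosen empty, its children were chosen nonempty. Each call \(\mathcal{S}(\beta, z)\) therefore preserves \(k\)-legality by \cref{prop: saturation}(c). Since \(W_\beta \subset Q_\beta^k\), the status of no other generation-\(k\) cube changes, so \(\beta\) stays ripe or full for the later calls. In ending (2) all children of \(\alpha\) are full, so \(\alpha\) is ripe or full, and the final call \(\mathcal{S}(\alpha, z)\) again preserves \(k\)-legality. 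The finer generations \(l > k\) are handled inside \cref{prop: saturation}.

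\emph{Part (b).} Each loop step subtracts exactly \(\mu(W_\beta)\) from \(z\), so in ending (1) we get \(\mu(W) = w\) at once. In ending (2), \cref{prop: saturation}(b) gives \(\mu(W_\alpha) = \min(r_\alpha^k, z)\), so it suffices to show \(z \le r_\alpha^k\). There are two subcases, according to how \(\alpha\) was chosen.

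If \(\alpha\) was chosen empty, then \(r_\alpha^k = M_\alpha^k \ge s_k \ge w \ge z\) when \(k\) satisfies \(w \le s_k\). If \(k = 0\) by default, then \(\alpha = X\), since \(|I_0| = 1\), and \(r_X^0 = \mu(X \setminus V) \ge w\) by assumption. The saturations in the loop only touched the children of \(\alpha\), so \(r_\alpha^k\) is unchanged.

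If there were no empty cubes, every generation-\(k\) cube is nonempty. By \(k\)-legality every non-root vertex of \(T_k\) is then full, being a child of a nonempty vertex. All the free space of \(X \setminus V\) therefore lies in the root, so \(r_\alpha^k = \mu(X \setminus V) - (w - z) \ge z\).

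\emph{Part (c).} By \cref{prop: saturation}(a), \(W \subset Q_\alpha^k \cup \bigcup_\beta Q_\beta^k\), where \(\beta\) runs over the children of \(\alpha\) in \(T_k\). Each such \(\beta\) is adjacent to \(\alpha\) in \(G_k\), i.e.\ \(\overline{Q}_\alpha^k \cap \overline{Q}_\beta^k \neq \emptyset\). \cref{prop: cubes} then gives \(W \subset B(x_\alpha^k, 3C\delta^k)\), so it remains to bound \(\delta^k \lesssim_X r\).

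If \(k\) is maximal with \(w \le s_k\), then \(w > s_{k+1} = a_0\delta^{(k+1)d}\), hence \(\delta^k < \delta^{-1}(w/a_0)^{1/d}\). If no such \(k\) exists, then \(k = 0\), and \(w > s_0 = a_0\) gives \(\delta^0 = 1 < a_0^{-1/d} r\). Either way we may take \(x = x_\alpha^k\) and \(C^* := 3C\delta^{-1}a_0^{-1/d}\), which depends only on \((X, \rho, \mu)\).

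I expect the main obstacle to be the subcase of (b) in which \(\alpha\) is the root and \(k > 0\). There one must argue from legality that all remaining space is concentrated in the root, rather than appeal to the lower bound \(M_\alpha^k \ge s_k\). The rest is direct bookkeeping.
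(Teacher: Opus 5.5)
Your proposal is correct and follows essentially the same route as the paper. You obtain legality from \cref{prop: saturation}, get \(z \le w \le r_\alpha^k\) separately for an empty \(\alpha\) (via \(M_\alpha^k \ge s_k\)) and for the root (via legality forcing all free space into the root), and localise \(W\) in \(B(x_\alpha^k, 3C\delta^k)\) through \cref{prop: cubes} together with \(w > s_{k+1}\). If anything, you are slightly more explicit than the paper, for instance in noting that the loop leaves \(r_\alpha^k\) unchanged and in writing out \(C^* = 3C\delta^{-1}a_0^{-1/d}\).
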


        \begin{proof}
            Let \(\alpha \in I_k\) be as in the
            definition of \(\mathcal{CS}(w)\), 
            and set \(x := x_\alpha ^ k\).

            By \cref{prop: saturation}, each step of the canonical
            saturation preserves the
            \(k\)-legality of the configuration, and (a) follows.
            If \(\alpha\) is empty, we know that it does not have empty children,
            and \(r_\alpha ^ k = M_\alpha ^ k \ge w\).
            Indeed, \(M_\alpha ^ k \ge s_k \ge w\) by the choice of \(k\).
            In case \(k = 0\) we have \(M_\alpha ^ 0 = \mu(X) = 1 \ge w\).
            By \cref{prop: saturation} (a) and the construction of \(W\),
            the set \(W\) lies in the union of the cubes associated with
            \(\alpha\) and its children in \(T_k\).
            Thus,
            \[
                W
                \subset
                \left( \bigcup_{\beta \,:\, (\alpha, \beta) \in E(G_k)} 
                Q_\beta ^ k \right) \cup Q_\alpha ^ k
                \subset
                B(x_\alpha ^ k, 3C \delta^k)
            \]
            (the last inclusion holds due to \cref{prop: cubes}).

            If there are no empty cubes in \(I_k\), the root is not empty,
            and, consequently, all other cubes are full
            (as descendants of a nonempty cube in a \(k\)-legal configuration),
            so \(W \subset Q_\alpha ^ k \subset B(x_\alpha ^ k, C \delta^k)\)
            as well. In this case \(r_\alpha ^ k = \mu(X \setminus V) \ge w\).

            In either case, \(W \subset B(x_\alpha ^ k, 3C \delta^k)\) and
            \(r_\alpha ^ k \ge w\).
            
            It remains to show that \(\mu(W) = w\).
            If we do not call \(\alpha\) in the canonical saturation,
            \(\mu(W) = \mu(W_0) = w\).
            Otherwise, the call \(\mathcal{S}(\alpha, z)\) is executed 
            with some \(z \le w \le r_\alpha ^ k\). 
            Hence \cref{prop: saturation} (b) gives \(\mu(W_\alpha) = z\),
            so \(\mu(W) = \mu(W_0) + \mu(W_\alpha) = (w - z) + z = w\).
            Property (b) is proved.

            The choice of \(k\) guarantees that
            \(w \ge s_{k+1} = a_0 \delta ^{(k+1)d} \gtrsim_X \delta^{kd}\).
            Consequently,
            \[
                3C \delta^k
                \lesssim_X
                \delta^{k}
                \lesssim_X w^{1/d} = r,
            \]
            which gives (c) for a suitable constant \(C^*\).
        \end{proof}

        Without loss of generality, we may rearrange masses and suppose that 
        \(\lambda_1 \ge \lambda_2 \ge \dots \ge \lambda_n\).
        The final version of the algorithm is the sequence of
        canonical saturations \(\mathcal{CS}(\lambda_j) \) 
        for \(j = 1, \dots, n \). The results of these steps are sets \(X_j\).
        Denote by \(k_j\) the generation chosen in the definition of
        \(\mathcal{CS}(\lambda_j)\). Since the masses decrease,
        \(k_1 \le \dots \le k_n\). 
        The empty configuration is \(k_1\)-legal, and induction, combined
        with \cref{lem: main}, gives that before step \(j\) the
        configuration is \(k_j\)-legal.
        Moreover,
        \(\mu(X \setminus V) = \sum_{i \ge j} \lambda_i \ge \lambda_j\),
        while the choice of \(K\) gives \(S_K \le \lambda_j\),
        so \cref{lem: main} applies at every step and yields the desired
        ball bound on \(X_j\).
        The saturation procedure guarantees that the sets \(X_j\) are
        pairwise disjoint.
        Denote \(X' := \bigcup_{j = 1} ^ n X_j \subset X\).
        Since \(\mu(X') = \sum_{j = 1}^n \mu(X_j) = 1\),
        \(\mu(X \setminus X') = 0\).
        Therefore, all the numbers \(r_\alpha ^ K\), \(\alpha \in I_K\),
        vanish.
        Whenever a saturation step reduces some \(r_\alpha ^ K\) to \(0\),
        it sets \(W = Q_\alpha ^ K \setminus V\) exactly, 
        so \(Q_\alpha ^ K \subset X'\).
        Taking the union over \(\alpha \in I_K\) gives \(X \subset X'\),
        so \(X = X'\).
        The proof is finished.
    \end{proof}

%% file: figures/GkTk.tex
\begin{figure}[htbp]
    \begin{minipage}[c]{0.52\textwidth}
    \centering
    \includegraphics[width=\linewidth]{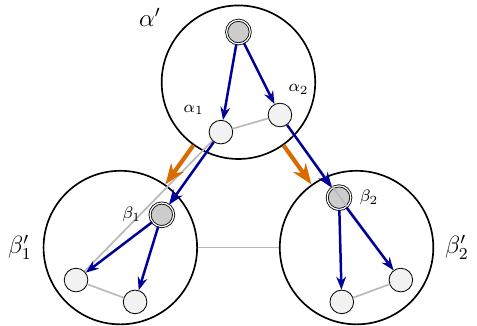}
    \end{minipage}%
    \hfill
    \begin{minipage}[c]{0.43\textwidth}
    \raggedright
    \caption
    {
        The recursive construction of \(T_k\) (blue)
        from \(T_{k-1}\) (orange).
        \label{fig: GkTk}
    }
    \end{minipage}
\end{figure}

%% file: figures/configuration.tex
\begin{figure}[htbp]
    \centering
    \includegraphics{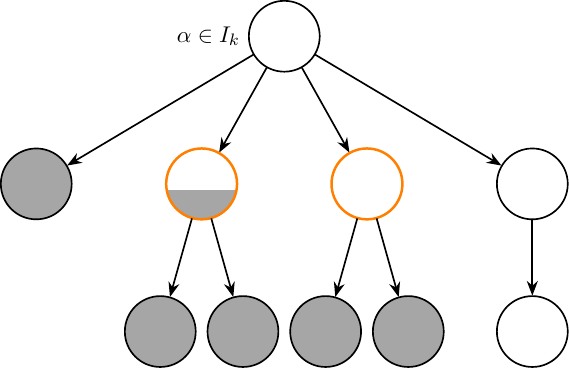}
    \caption{
        A \(k\)-legal configuration \(V\) and the children of
        \(\alpha \in I_k\):
        full, empty, and ripe (orange).
     \label{fig: configuration}
    }
\end{figure}

%% file: figures/saturation.tex
\begin{figure}[htbp]
    \centering
    \includegraphics{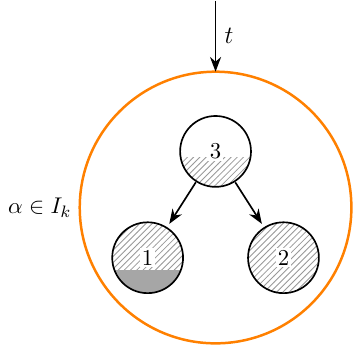}
    \caption{
        The saturation \(\mathcal{S}(\alpha, t)\) applied to a ripe
        \(\alpha \in I_k\).
        The numbers give the order in which the nested 
        children are saturated.
    }
    \label{fig: saturation}
\end{figure}

%% file: figures/canonSaturation.tex
\begin{figure}[htbp]
    \centering
    \captionsetup[subfigure]{labelformat=empty}
    \begin{subfigure}[b]{0.48\textwidth}
        \centering
        \includegraphics{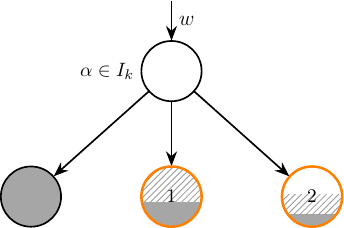}
        \caption{
            Case (1): the loop ends with \(z = 0\).
        }
        \label{fig: canonSaturationA}
    \end{subfigure}
    \hfill
    \begin{subfigure}[b]{0.48\textwidth}
        \centering
        \includegraphics{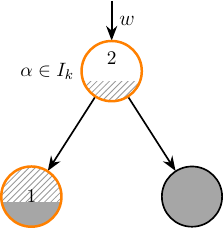}
        \caption{
            Case (2): all children of \(\alpha\) become full.
        }
        \label{fig: canonSaturationB}
    \end{subfigure}
    \caption{
        The canonical saturation \(\mathcal{CS}(w)\) at a cube
        \(\alpha \in I_k\) without empty children, in the two
        possible cases (1) and (2).
    }
    \label{fig: canonSaturation}
\end{figure}

%% file: chapters/tools.tex
\section{Technical tools} \label{sec: tools}

    The goal of the current section is to develop some
    tools to construct spaces with a connected dyadic cube decomposition
    (recall that we call these spaces \cubedec), and then
    apply \cref{thm: partitionConnected}.

    \smallskip

    We begin with the simplest \cubedec space.
    
    \begin{proposition} \label{prop: cubeGood}
        Euclidean space \(\Rd\) is \cubedec.
        Standard Euclidean cubes
        \(\sigma^d = (-1/2, 1/2)^d \) and
        \(\overline{\sigma}^d = [-1/2, 1/2]^d\)
        are \cubedec in \(\Rd\).
    \end{proposition}

    \begin{proof}
        Take the standard grid of half-open cubes of side
        \(\ell_k := 2^{-k}\) (with properly attached boundary
        for \(\sigma ^ d\) and \(\overline{\sigma} ^ d\)).
        Each such cube is connected, contains a 
        ball of radius \(\frac{1}{2}\ell_k\) and is contained in a 
        ball of radius \(\sqrt{d} \ell_k\), which gives
        properties (a)--(d) of \cref{def: connectedCubes}
        with scaling value \(1/2\).
    \end{proof}

    \subsection{Bi-Lipschitz deformations}

        The first strategy to obtain a \cubedec space
        is to deform another \cubedec one.

        \begin{definition} \label{def: biLip}
            We say that a map \(F\) between metric spaces
            \((X, \rho_X)\) and \((Y, \rho_Y)\) is 
            \textbf{bi-Lipschitz} with constant \(L\) if \(Y = F(X)\) 
            and 
            \[
                L^{-1} \rho_X(x_1, x_2)
                \le
                \rho_Y(F(x_1), F(x_2))
                \le
                L \rho_X(x_1, x_2).
            \]
            In this case, we also say that \((X, \rho_X)\) 
            and \((Y, \rho_Y)\) are bi-Lipschitz equivalent.
        \end{definition}

        \begin{lemma} \label{lem: biLipIn}
            Let \((X, \rho_X)\) and \((Y, \rho_Y)\) be metric spaces
            with bi-Lipschitz \(F \colon X \to Y\).
            Suppose that \(X' \subset X\) is \cubedec in \(X\).
            Then, \(F(X')\) is \cubedec in \(Y\).
        \end{lemma}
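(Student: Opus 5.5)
The plan is to push the given decomposition forward by \(F\). Let \(\{Q_\alpha^k\}\) be a connected dyadic cube decomposition of \(X'\) with ball constant \(C\) and scaling value \(\delta\), where (c) holds with the inner balls taken in \(X\). We set \(P_\alpha^k := F(Q_\alpha^k)\) with the same index sets \(I_k\), the same scaling value \(\delta\), and centres \(y_\alpha^k := F(x_\alpha^k)\). We then verify properties (a)--(d) of \cref{def: connectedCubes} for \(F(X')\) in the restricted metric of \(Y\), together with the inner-ball condition taken in \(Y\).

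Properties (a) and (b) are purely set-theoretic. A bi-Lipschitz map is injective, since \(\rho_Y(F(x_1),F(x_2)) \ge L^{-1}\rho_X(x_1,x_2) > 0\) for \(x_1 \ne x_2\). Hence \(F\) preserves disjoint unions, inclusions and empty intersections. This gives \(F(X') = \bigsqcup_{\alpha \in I_k} P_\alpha^k\) and the nesting dichotomy. Property (d) holds because \(F\) is continuous, so the image \(P_\alpha^k\) of the connected set \(Q_\alpha^k\) is connected, as a subset of \(Y\) and hence also of \(F(X')\).

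The only real work is property (c), and the inner ball is the one point that needs care. For the outer ball, if \(x \in Q_\alpha^k \subset B(x_\alpha^k, C\delta^k)\), then \(\rho_Y(F(x), y_\alpha^k) \le L\rho_X(x, x_\alpha^k) < LC\delta^k\). Intersecting with \(F(X')\), this gives \(P_\alpha^k \subset B(y_\alpha^k, LC\delta^k)\).

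For the inner ball, we must show \(B_Y(y_\alpha^k, L^{-1}C^{-1}\delta^k) \subset P_\alpha^k\), with the ball taken in \(Y\). Here we use that \(F\) is onto \(Y\) by \cref{def: biLip}. Take \(y \in Y\) with \(\rho_Y(y, y_\alpha^k) < L^{-1}C^{-1}\delta^k\) and write \(y = F(x)\) with \(x \in X\). Then \(\rho_X(x, x_\alpha^k) \le L\rho_Y(y, y_\alpha^k) < C^{-1}\delta^k\), so \(x \in B_X(x_\alpha^k, C^{-1}\delta^k) \subset Q_\alpha^k\). Therefore \(y \in P_\alpha^k\).

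Surjectivity is essential at this step. Without it, points of the \(Y\)-ball outside \(F(X)\) could not be pulled back, so the inner ball in \(Y\) would not be controlled. With it, (c) holds in \(Y\) with ball constant \(LC\), and therefore \(F(X')\) is \cubedec in \(Y\).
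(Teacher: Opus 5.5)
Your proof is correct and follows the paper's argument: you push the cubes forward by \(F\), keep the scaling value \(\delta\), and obtain property (c) with ball constant \(LC\) (the paper's \(La\)), while (a), (b), (d) follow from \(F\) being injective and continuous (the paper cites that \(F\) is a homeomorphism). Your step that uses the surjectivity \(Y = F(X)\) to pull back points of the inner ball taken in \(Y\) spells out what the paper states in a single line.
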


        \begin{proof}
            By \cref{cor: goodScaling}, \(X'\) admits a family
            \(\mathcal{P}\) of connected dyadic cubes with scaling value
            \(\delta\) and ball constant \(a\). For
            \(P_\alpha^k \in \mathcal{P}\), fix \(x_0 \in P_\alpha^k\) with
            \(B(x_0, a^{-1}\delta^k) \subset P_\alpha^k \subset
            B(x_0, a\delta^k)\), and set \(Q_\alpha^k := F(P_\alpha^k)\),
            \(y_0 := F(x_0)\). Since \(F\) is bi-Lipschitz with constant
            \(L\), we have 
            \(L^{-1}\rho_X(x_0, x) \le \rho_Y(F(x_0), F(x)) \le L\rho_X(x_0, x)\),
            which gives
            \[
                B(y_0, (La)^{-1}\delta^k) 
                \subset Q_\alpha^k 
                \subset
                B(y_0, La\delta^k),
            \]
            where the balls are taken in \(Y\).
            Thus, property (c) of \cref{def: connectedCubes} holds
            with scaling value
            \(\delta\) and ball constant \(La\). Since \(F\) is a
            homeomorphism, (a), (b), (d) are immediate, and
            \(F(X')\) is \cubedec in \(Y\).
        \end{proof}

        Taking \(X' = X\), we obtain a similar result
        for the whole space.

        \begin{corollary} \label{lem: biLip}
            Let \((X, \rho_X)\) and \((Y, \rho_Y)\) be metric spaces,
            and suppose that \(X\) is \cubedec.
            If there exists a bi-Lipschitz map
            \(F \colon X \to Y\), then \(Y\) is \cubedec too.
        \end{corollary}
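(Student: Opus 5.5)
This follows by specialising \cref{lem: biLipIn} to \(X' = X\). I will first check that \(X\) is \cubedec in itself, as the remark after \cref{def: cubeDecomp} notes. Since \(X\) is \cubedec, it carries a connected dyadic cube decomposition satisfying (a)--(d) of \cref{def: connectedCubes}. The inner balls in property (c) are balls of \(X\) itself, so the extra requirement that the inner ball be taken in the ambient space \(X\) holds trivially. Hence \(X\) is \cubedec in \(X\).

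Next I will apply \cref{lem: biLipIn} with \(X' = X\) and the given bi-Lipschitz map \(F \colon X \to Y\). This gives that \(F(X)\) is \cubedec in \(Y\). By \cref{def: biLip}, a bi-Lipschitz map satisfies \(Y = F(X)\), so \(Y\) is \cubedec in \(Y\).

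Finally, by the same remark, a set that is \cubedec in \(Y\) is in particular \cubedec with respect to the restricted metric. When the set is \(Y\) itself, the restricted metric is just \(\rho_Y\), so \(Y\) is \cubedec.

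No step is a real obstacle. The only point needing care is the surjectivity \(Y = F(X)\). It is built into \cref{def: biLip}, and it is what lets us identify the image decomposition \(\{F(P_\alpha^k)\}\) with a decomposition of all of \(Y\), so that property (a) holds on \(Y\) and not just on a subset.
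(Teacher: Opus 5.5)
Your proposal is correct and follows the paper's own argument. The paper proves this corollary by taking \(X' = X\) in \cref{lem: biLipIn}, using that \(X\) is trivially cube-decomposable in itself and that \(F(X) = Y\) by the definition of a bi-Lipschitz map.
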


        This tool allows us to construct sets that are
        \cubedec in \(\Rd\).

        \begin{corollary} \label{col: convex}
            A compact convex set \(A \subset \Rd\) with nonempty
            interior, equipped with the Euclidean metric,
            is \cubedec in \(\Rd\).
        \end{corollary}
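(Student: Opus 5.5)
The plan is to build a bi-Lipschitz map from the closed cube \(\overline{\sigma}^d\) onto \(A\) that extends to a bi-Lipschitz map of all of \(\Rd\), and then invoke \cref{lem: biLipIn} together with \cref{prop: cubeGood}. Since \(A\) has nonempty interior, after a translation and dilation (both bi-Lipschitz self-maps of \(\Rd\)) we may assume that the origin is an interior point of \(A\) and that \(\B^d \subset A\). Likewise we may assume the cube has been rescaled so that \(\B^d \subset \overline{\sigma}^d\), or we may simply compare both sets to a closed ball.

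The key device is the radial map associated with a Minkowski gauge. For a compact convex body \(K\) containing a ball \(B(0,r_0)\) and contained in \(B(0,R_0)\), let \(p_K(x) := \inf\{t>0 : x \in tK\}\) be its gauge. It is standard that \(p_K\) is convex, positively homogeneous and Lipschitz with constant \(r_0^{-1}\), and that \(p_K(x) \ge |x|/R_0\). Define \(\Phi_K(x) := \frac{p_K(x)}{|x|}\,x\) for \(x\neq 0\) and \(\Phi_K(0):=0\). This map sends \(K\) onto the closed unit ball and \(\Rd\) onto \(\Rd\), with inverse \(y \mapsto \frac{|y|}{p_K(y)}y\). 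I would then apply \(\Phi_A^{-1}\circ\Phi_{\overline{\sigma}^d}\), which maps \(\overline{\sigma}^d\) onto \(A\) and is a bijection of \(\Rd\). Once this composite is bi-Lipschitz on \(\Rd\), \cref{lem: biLipIn} applied with \(X=Y=\Rd\) and \(X'=\overline{\sigma}^d\) (which is \cubedec in \(\Rd\) by \cref{prop: cubeGood}) gives the claim.

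The main obstacle is proving that \(\Phi_K\) is bi-Lipschitz. I would write \(\Phi_K(x) = g(x)x\) with \(g(x)=p_K(x)/|x|\). The function \(g\) is \(0\)-homogeneous, satisfies \(R_0^{-1}\le g\le r_0^{-1}\), and is Lipschitz on the unit sphere. Consequently \(|\nabla g(x)|\lesssim 1/|x|\) almost everywhere, which yields \(|g(x)x-g(y)y| \le g(x)|x-y| + |g(x)-g(y)|\,|y|\lesssim|x-y|\). To control the second term, I would assume \(|y|\le|x|\) without loss of generality and compare along the radial projection. The estimate \(|g(x)-g(y)|\lesssim |x-y|/\max(|x|,|y|)\) should follow by first moving \(y\) radially to the sphere of radius \(|x|\), which changes \(g\) by nothing, and then moving along that sphere, where \(g\) has Lipschitz constant \(\lesssim 1/|x|\). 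The radial step costs at most \(|x-y|\), so the total displacement is still comparable to \(|x-y|\).

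The inverse has the same form, with \(g\) replaced by \(1/g\) evaluated at the appropriate point: since \(\Phi_K\) preserves rays, \(\Phi_K^{-1}(y) = y/g(y)\). The function \(1/g\) is again \(0\)-homogeneous, bounded above and below, and Lipschitz on the sphere, so the same argument shows that \(\Phi_K^{-1}\) is Lipschitz as well. The composite of two bi-Lipschitz maps is bi-Lipschitz, which finishes the argument.
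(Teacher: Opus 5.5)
Your proposal is correct and is essentially the paper's proof. Your gauge map \(\Phi_K(x) = p_K(x)\,x/|x|\) is exactly the inverse of the paper's radial map \(F_K(r\xi) = r\,R(\xi)\,\xi\), since \(p_K(x) = |x|/R(x/|x|)\); hence \(\Phi_A^{-1}\circ\Phi_{\overline{\sigma}^d} = F_A\circ F_{\overline{\sigma}^d}^{-1}\), and you then appeal to \cref{prop: cubeGood} and \cref{lem: biLipIn} just as the paper does. The only difference is that you carry out the bi-Lipschitz verification for the \(0\)-homogeneous factor \(g\), via a radial step followed by a spherical step, which the paper leaves as a direct check.
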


        \begin{proof}
            Assume that \(0 \in \Int(A)\).
            Take the standard radial function
            \(R \colon \Sp^{d - 1} \to \R_+ \),
            \(R(\xi) := \sup \{ t \mid t\xi \in A \} \).
            A direct check shows that \(R\)
            is Lipschitz and satisfies
            \(0 < r_0 \le R \le r_1 < \infty\)
            for constants \(r_0, r_1\), such that
            \(B(0, r_0) \subset A \subset B(0, r_1)\).

            Consequently, the map
            \(F_A \colon \Rd \to \Rd \), defined by
            \(F_A(r \xi) := r \, R(\xi) \, \xi \), is
            bi-Lipschitz and satisfies \(F_A(\overline{\B}^d) = A\).
            Now consider \(F := F_A \circ F_{\overline{\sigma}^d} ^ {-1}\).
            Since \(F \colon \Rd \to \Rd\) is bi-Lipschitz
            and \(F(\overline{\sigma}^d) = A\),
            \cref{prop: cubeGood} and \cref{lem: biLipIn}
            give that \(A\) is \cubedec in \(\Rd\).
        \end{proof}

        The same argument works if \(A\) is an open convex
        set with compact closure.
        In particular, each closed nondegenerate
        simplex \(\overline{S} = \conv \{v_0, \dots, v_d\}\)
        and its interior are \cubedec in \(\Rd\).

    \subsection{Gluing}

        Next, we would like to glue \cubedec spaces together.

        \begin{lemma} \label{lem: glue}
            Suppose that \(X\) is a metric space, and 
            \(X = \bigcup_{j \in \Lambda} \overline{X}_j \), where
            the following conditions hold:
            \begin{itemize}
                \item[a)] Each \(\overline{X}_j\) with the restricted metric 
                is doubling;
                \item[b)]  the \(X_j\) are pairwise disjoint subsets that are
                \cubedec in \(X\) with the same scaling value;
                \(\delta\)
                \item[c)] Ball constants
                of the \(X_j\) are uniformly bounded.
            \end{itemize}
            Then, \(X\) is \cubedec.
        \end{lemma}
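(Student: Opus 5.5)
The plan is to reduce to \cref{prop: sandwich}: first redistribute the points of \(X\) not covered by any \(X_j\) so that the pieces become pairwise disjoint and still \cubedec in \(X\), and then take the union of their cube families level by level.

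\emph{Step 1: a disjoint refinement.} Fix a well-ordering of the index set \(\Lambda\). For \(x \in X\):
\begin{itemize}
    \item if \(x \in X_j\) for some \(j\), assign \(x\) to that \(j\), which is unique by hypothesis b);
    \item otherwise \(x \in \overline{X}_j\) for at least one \(j\), because \(X = \bigcup_j \overline{X}_j\), and we assign \(x\) to the least such \(j\).
\end{itemize}
Let \(Z_j\) be the set of points assigned to \(j\). By construction the \(Z_j\) are pairwise disjoint, \(X = \bigsqcup_{j \in \Lambda} Z_j\), and \(X_j \subset Z_j \subset \overline{X}_j\).

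\emph{Step 2: each \(Z_j\) is \cubedec in \(X\).} By a), the closure \(\overline{X}_j\) is doubling, and \(X_j\) is \cubedec in \(X\) by b). So \cref{prop: sandwich} applies and \(Z_j\) is \cubedec in \(X\). I will also record from its proof that the scaling value \(\delta\) is unchanged. The new cubes satisfy
\[
P_\alpha^k \subset Q_\alpha^k \subset \overline{P}_\alpha^k \subset B(x_\alpha^k, 2C_j\delta^k),
\]
where \(C_j\) is the ball constant of \(X_j\). Hence the ball constant of \(Z_j\) is at most \(2C_j\), and by c) it is at most \(2C\) with \(C := \sup_j C_j < \infty\). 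The inner balls stay inside \(X\), because \(P_\alpha^k \subset Q_\alpha^k\).

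\emph{Step 3: union of the families.} Let \(\{Q^{k}_{j,\alpha}\}_{\alpha \in I_k^{(j)}}\) be the cube family of \(Z_j\). For \(X\) take the index sets \(I_k := \bigsqcup_j I_k^{(j)}\) and the cubes \(Q^k_{(j,\alpha)} := Q^{k}_{j,\alpha}\). I then check the conditions of \cref{def: connectedCubes}.
\begin{itemize}
    \item (a) At each level the cubes of \(Z_j\) partition \(Z_j\), and the \(Z_j\) partition \(X\), so the whole family partitions \(X\).
    \item (b) Two cubes with the same \(j\) are nested or disjoint by (b) for \(Z_j\). Two cubes with different \(j\) are disjoint, since they lie in disjoint sets \(Z_j\).
    \item (c) This holds with scaling value \(\delta\) and ball constant \(2C\), with balls taken in \(X\), by Step 2. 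This is exactly where "\cubedec in \(X\)", rather than just \cubedec, is needed: the inner ball must lie in \(X\), not merely in \(Z_j\). It is also where the common \(\delta\) and the uniform bound c) are needed.
    \item (d) Each cube is connected by Step 2.
\end{itemize}
Hence \(X\) is \cubedec.

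The main point needing care is Step 1. The leftover points \(X \setminus \bigcup_j X_j\) may lie in several closures at once, and must be assigned consistently so that \cref{prop: sandwich} applies to each piece separately while the pieces stay disjoint. The well-ordering of \(\Lambda\) (possibly infinite) handles this, and \cref{prop: sandwich} itself handles the consistency with the nested structure inside each \(Z_j\). A secondary check is that \cref{prop: sandwich} does not alter \(\delta\) and only doubles the ball constant, so the uniform bound c) survives.
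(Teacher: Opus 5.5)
Your proposal is correct and follows the paper's proof essentially step for step. Both refine the cover to pairwise disjoint sets \(Z_j\) with \(X_j \subset Z_j \subset \overline{X}_j\), apply \cref{prop: sandwich} to each \(Z_j\), and merge the cube families level by level, relying on the inner balls lying in \(X\) and on the uniform bound on ball constants. The only differences are cosmetic: you fix a well-ordering of \(\Lambda\) where the paper makes an arbitrary choice, and you state explicitly the factor \(2\) by which \cref{prop: sandwich} enlarges the ball constant.
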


        \begin{proof}
            Construct sets \(Y_j\), where \(j \in \Lambda\).
            We attach each \(x \in X\) to some \(Y_j\).
            If \(x \in X_k\), take \(j = k\).
            Otherwise, take an arbitrary \(j\), such that
            \(x \in \overline{X}_j\).
            Since \(X_j \subset Y_j \subset \overline{X}_j\), \cref{prop: sandwich}
            gives that each \(Y_j\) is \cubedec in \(X\)
            with the same scaling
            value as \(X_j\) and bounded ball constant.
            By construction, \(X = \bigsqcup_{j \in \Lambda} Y_j\).

            \smallskip

            Let \(\mathcal{Q}_j\), \(j \in \Lambda\), be families
            of dyadic cubes for the \(Y_j\) with common scaling value \(\delta\)
            and ball constants \(C_j \le C_0\) from the hypothesis.
            Merge these families: put
            \(I_k := \bigsqcup_{j \in \Lambda} I_k ^ j\).

            This gives a family of connected dyadic cubes for \(X\).
            Property (a) of \cref{def: connectedCubes} holds since
            \[
                X = \bigsqcup_j Y_j
                =
                \bigsqcup_j \bigsqcup_{\alpha \in I_k ^ j} Q_\alpha ^{k, j}
                =
                \bigsqcup_{\alpha \in I_k} Q_\alpha ^ k.
            \]
            For (b), take \(\alpha \in I_l\), \(\beta \in I_k\), \(l \ge k\).
            If \(\alpha, \beta\) come from the same \(j\), this is
            property (b) for \(\mathcal{Q}_j\).
            Otherwise, suppose that they come from distinct \(j \neq j'\).
            Then, \(Q_\alpha ^ l \subset Y_j\) and
            \(Q_\beta ^ k \subset Y_{j'}\) are disjoint since \(Y_j\) and
            \(Y_{j'}\) are.
            Since each \(Y_j\) is \cubedec in \(X\), the inner
            inclusion of property (c) holds with the ball taken in
            \(X\), so (c) holds for the merged family with ball
            constant \(C_0\); (d) is trivial.
            Hence \(X\) is \cubedec.
        \end{proof}

        \begin{corollary} \label{col: finGlue}
            Suppose that \(X\) is a doubling metric space,
            \(X = \bigcup_{j = 1} ^ n \overline{X}_j \), where
            the \(X_j\) are pairwise disjoint subsets that are
            \cubedec in \(X\).
            Then, \(X\) is \cubedec.
        \end{corollary}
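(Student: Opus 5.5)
This is a direct application of \cref{lem: glue} with the finite index set \(\Lambda = \{1, \dots, n\}\). The work is to check its three hypotheses, of which only the common scaling value needs a real step.

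First, hypothesis a). Each \(\overline{X}_j\) is a subset of the doubling space \(X\), and a subset of a doubling metric space is doubling with the restricted metric. Concretely, if \(B(x, 2r) \cap \overline{X}_j\) is covered by at most \(C\) balls of radius \(r/2\) in \(X\), recentre every such ball that meets \(\overline{X}_j\) at a point of \(\overline{X}_j\). This gives at most \(C\) balls of radius \(r\) in \(\overline{X}_j\). The doubling constant gets worse, but only by a fixed amount. Since the closure of \(X_j\) in \(X\) is exactly what appears in \cref{prop: sandwich}, invoked inside \cref{lem: glue}, nothing more is needed here.

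Second, hypothesis b). The sets \(X_j\) are pairwise disjoint and \cubedec in \(X\) by assumption, but possibly with different scaling values \(\delta_j\). I will fix one common value, say \(\delta = 1/2\), and apply \cref{cor: goodScaling} to each \(X_j\) separately. This gives a connected dyadic cube decomposition of \(X_j\), \cubedec in \(X\), with scaling value \(\delta\). It is worth checking that the reindexing in \cref{prop: scaling} preserves the inner-ball condition taken in \(X\): it only reuses the original cubes and the original balls, with a modified constant, so it does.

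Third, hypothesis c). There are only finitely many pieces, so the ball constants \(C_1, \dots, C_n\) obtained after rescaling are automatically uniformly bounded by \(\max_j C_j\).

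With all three hypotheses verified, \cref{lem: glue} yields that \(X\) is \cubedec. The main obstacle is the passage to a common scaling value, and \cref{cor: goodScaling} handles it; the rest is bookkeeping about the doubling property of subsets.
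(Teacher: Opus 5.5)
Your proposal is correct and follows the paper's proof: equalise the scaling values with \cref{cor: goodScaling}, observe that each closure \(\overline{X}_j\) is doubling as a subset of the doubling space \(X\), bound the finitely many ball constants by their maximum, and apply \cref{lem: glue}. One small slip: covering \(B(x,2r)\) by balls of radius \(r/2\) uses the doubling property twice, so the count is \(C^2\) rather than \(C\); this does not affect the argument.
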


        \begin{proof}
            By \cref{cor: goodScaling} we may make scaling
            values the same. We note that the \(\overline{X}_j\) with the
            restricted metric are doubling.
            Then, since the union is finite,
            the ball constants are uniformly bounded,
            and we can apply \cref{lem: glue}.
        \end{proof}

    \subsection{Differential geometry}

        We conclude with several technical facts on \(C^1\)
        manifolds, needed later to handle the manifold case.

        Firstly, we need to equip manifolds with 
        some well-behaved metric.

        \begin{definitionbold} \label{def: lipComp}
            Suppose that \(M\) is a \(C^1\) manifold.
            We say that the metric \(\rho\) on \(M\)
            is \textbf{Lipschitz-compatible} if \(M\) may be covered
            by charts \((U, h)\) such that the map
            \[
                h \colon (U, \rho) \to (h(U), |\cdot|)
            \]
            is bi-Lipschitz.
            We call such a chart \textbf{bi-Lipschitz} as well.
        \end{definitionbold}

        \smallskip

        The two standard metrics on a manifold are both
        Lipschitz-compatible.

        \begin{proposition} \label{prop: geodesicLipComp}
            Suppose that \(M\) is a \(C^1\) manifold,
            equipped with a Riemannian metric \(g\).
            Then, the geodesic metric \(\rho_g\) on \(M\)
            is Lipschitz-compatible.
        \end{proposition}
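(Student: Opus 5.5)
We must show: for each point \(p \in M\) there is a chart \((U,h)\) around \(p\) such that \(h\colon (U,\rho_g)\to (h(U),|\cdot|)\) is bi-Lipschitz. The plan is to take any \(C^1\) chart \((U_0,h_0)\) around \(p\), shrink it to a small coordinate ball \(U\), and compare the geodesic distance with the Euclidean distance of the coordinates.

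First I fix the coordinate picture. By translating, let \(h_0(p)=0\), and choose \(r>0\) with \(\overline{B(0,3r)}\subset h_0(U_0)\). In these coordinates the metric is a continuous matrix field \(g_{ij}(y)\). By compactness of \(\overline{B(0,3r)}\) and continuity, there is \(\Lambda\ge 1\) with
\[
\Lambda^{-2}|v|^2 \le g_{ij}(y)v^iv^j \le \Lambda^2|v|^2, \qquad y\in \overline{B(0,3r)},\ v \in \Rd .
\]
Set \(U:=h_0^{-1}(B(0,r))\) and \(h:=h_0|_U\). Since these charts cover \(M\), it suffices to prove the bi-Lipschitz estimate on each such \(U\).

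The upper bound is immediate. For \(x_1,x_2\in U\), the segment between \(h(x_1)\) and \(h(x_2)\) lies in the convex ball \(B(0,r)\), and pulling it back gives a \(C^1\) curve of \(g\)-length at most \(\Lambda|h(x_1)-h(x_2)|\). Therefore \(\rho_g(x_1,x_2)\le \Lambda|h(x_1)-h(x_2)|\).

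The lower bound is the main obstacle, because a curve nearly realizing \(\rho_g\) may leave the chart. I would argue as follows. Take any piecewise \(C^1\) curve \(\gamma\) from \(x_1\) to \(x_2\), and split into two cases.

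In the first case, \(\gamma\) stays in \(h_0^{-1}(B(0,3r))\). Then its \(g\)-length is at least \(\Lambda^{-1}\) times the Euclidean length of \(h_0\circ\gamma\), which is at least \(\Lambda^{-1}|h(x_1)-h(x_2)|\).

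In the second case, \(\gamma\) leaves \(h_0^{-1}(B(0,3r))\). Its image must then cross the annulus between radii \(r\) and \(3r\), so the portion of the curve before it first exits has Euclidean coordinate length at least \(2r\). Hence \(\len_g(\gamma)\ge 2r/\Lambda\). On the other hand, \(|h(x_1)-h(x_2)|<2r\), so in this case too \(\len_g(\gamma)\ge \Lambda^{-1}|h(x_1)-h(x_2)|\).

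Taking the infimum over all curves gives \(\rho_g\ge\Lambda^{-1}|h(x_1)-h(x_2)|\). So \(h\) is bi-Lipschitz with constant \(\Lambda\), and \(\rho_g\) is Lipschitz-compatible.

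One point needs care. The definition of \(\rho_g\) is an infimum over piecewise \(C^1\) curves, and the first-exit argument uses continuity of \(h_0\circ\gamma\) together with the intermediate value theorem on \(|h_0\circ\gamma|\). With \(C^1\) regularity of the manifold, these curves and the length formula are well defined, so the argument goes through.
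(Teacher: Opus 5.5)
Your proof is correct and follows essentially the same route as the paper: a compactness comparison of \(g\) with the coordinate Euclidean norm on a closed coordinate ball, the pulled-back straight segment for the upper bound, and a first-exit case split for the lower bound (your radii \(r\) and \(3r\) even give the constant \(\Lambda\), where the paper gets \(2L\)). The only thing you leave implicit is boundary points, which the paper treats with half-balls \(\overline{B}(0,R)\cap\HR^m\); your argument goes through verbatim there, since half-balls are still convex and the first exit still happens through the spherical part.
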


        \begin{proof}
            Let \(m := \dim M\).
            Fix \(p \in M\), and an open \(U \ni p\), with
            \(h \colon U \to \HR^m \) or \(\R^m\).
            Assume that \(h(p) = 0\). Thus, 
            \(\overline{V} := \overline{B}(0, R) \) (or \(\overline{B}(0, R) \cap \HR^m \)
            in the boundary case) lies in \(h(U)\)
            for some \(R > 0\).
            Denote \(K := h^{-1}(\overline{V})\).

            \smallskip

            We write 
            \(\|v\| := |Dh_x(v)|\) for \(v \in T_x M\) (tangent space),
            and \(|v| := \sqrt{g_x(v, v)}\) for \(v \in T_x M\).
            \(K\) is compact, hence the set
            \(\{(x, v) \in T(M) \mid x \in K, \ \|v\| = 1\}\)
            is compact as well. Since \(|v|\) is nonzero, continuous
            and homogeneous, we obtain that
            \[
                L^{-1} \|v\|
                \le
                |v|
                \le
                L \|v\|
            \]
            for each \(x \in K, \, v \in T_x M\) and some \(L \ge 1\).

            \smallskip

            Take some \(x, y \in K\).
            We want to prove the upper and lower Lipschitz bounds
            on \(\rho_g(x, y)\).
            Let \(\beta \colon [0, 1] \to \overline{V}\) be a standard 
            segment between \(h(x)\) and \(h(y)\).
            Put \(\gamma := h^{-1} \circ \beta \). 
            We obtain that 
            \begin{equation} \label{eq: upperBound}
                \begin{split}
                    \rho_g(x, y)
                    \le
                    \len(\gamma)
                    & =
                    \int_0 ^ 1 |\gamma'(t)| \, dt \\
                    & \le
                    L \int_0 ^ 1 |\beta'(t)| \, dt
                    =
                    L |h(x) - h(y)|.
                \end{split}
            \end{equation}

            \smallskip 

            To get the lower bound, suppose that \(|h(x)|, |h(y)| < r\)
            for some fixed \(r < R/2\).
            Consider a path
            \(\gamma \colon [0, 1] \to M\) between \(x, y\).

            Suppose that \(\gamma\) leaves \(K\), so
            \(\gamma(t) \notin K\) for some \(t \in [0,1]\). 
            Take \(s := \inf \{t \mid \gamma(t) \notin K\}\).
            Then, \(z := \gamma(s) \notin \Int K\) and we obtain
            \begin{equation} \label{eq: leaveChartBound}
            \begin{split}
                \len(\gamma)
                & \ge
                \int_0^s |\gamma'(t)| \, dt \\
                & \ge
                L^{-1} |h(x) - h(z)| \\
                & \ge
                R/(2L) \ge r/L \ge (2L)^{-1} |h(x) - h(y)|.
            \end{split}
            \end{equation}

            If \(\gamma\) does not leave \(K\),
            we may apply the same estimate as in
            \eqref{eq: upperBound}, which gives
            \[
                \len(\gamma)
                \ge
                L^{-1} \len(\beta)
                \ge
                L^{-1} |h(x) - h(y)|
            \]
            for \(\beta := h \circ \gamma\). 
            
            In either case
            \(\len(\gamma) \ge (2L)^{-1} |h(x) - h(y)|\).
            Taking the infimum over all \(\gamma\), 
            we obtain the lower bound.

            \smallskip 
            
            We proved the bi-Lipschitz bound on 
            \(V_p := h^{-1}(B(0, r))\). Since \(p \in V_p\),  
            we found a bi-Lipschitz chart about \(p\),
            and the statement follows.
        \end{proof}

        \begin{proposition} \label{prop: embeddedLipComp}
            Suppose that a \(C^1\) manifold \(M\) is
            \(C^1\)-embedded in Euclidean space.
            Then, the restricted Euclidean metric
            \(\rho_M\) is Lipschitz-compatible.
        \end{proposition}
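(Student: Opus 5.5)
The plan is to reproduce the argument of \cref{prop: geodesicLipComp}, with the Euclidean chordal metric in place of the geodesic one. The upper and lower bounds will come from different mechanisms.

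Let \(M \subset \R^N\) be \(C^1\)-embedded, set \(m := \dim M\), and fix \(p \in M\). I will not start from an arbitrary chart. Instead I would use the local graph representation that a \(C^1\) embedding provides. After a rigid motion we may assume \(p = 0\) and \(T_p M = \R^m \times \{0\}\). Let \(\pi \colon \R^N \to \R^m\) be the orthogonal projection. Then \(\pi|_M\) has invertible differential at \(p\), so by the inverse function theorem (applied in the half-space version at boundary points) there are a neighbourhood \(U\) of \(p\) in \(M\) and an open \(V \subset \R^m\) or \(\HR^m\) such that \(h := \pi|_U \colon U \to V\) is a \(C^1\) chart. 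Its inverse has the form \(h^{-1}(u) = (u, f(u))\), where \(f\) is \(C^1\) and \(Df(0) = 0\).

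The upper bound is immediate, since \(\pi\) is \(1\)-Lipschitz:
\[
    |h(x) - h(y)| \le |x - y|.
\]
For the lower bound, shrink \(V\) to a small ball \(B(0, r)\), or \(B(0, r) \cap \HR^m\) in the boundary case, on which \(\|Df\| \le 1\). This set is convex, so the mean value inequality along the segment gives \(|f(u) - f(v)| \le |u - v|\). Hence
\[
    |h^{-1}(u) - h^{-1}(v)| \le \sqrt{2}\, |u - v|.
\]
Together the two bounds show that \(h\) is bi-Lipschitz with constant \(\sqrt{2}\) on \(h^{-1}(B(0, r))\). This gives a bi-Lipschitz chart around every point, which is exactly \cref{def: lipComp}.

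The main obstacle is obtaining the local graph structure. Compared with \cref{prop: geodesicLipComp}, this is actually simpler, because the chordal metric is defined directly and no path can leave the chart. The real care is at the boundary. There the inverse function theorem must be applied after extending the chart map \(C^1\) across the boundary of \(\HR^m\), and we must check that \(\pi\) sends \(M\) near \(p\) onto a relatively open subset of a half-space. I would handle this by composing \(\pi\) with a given boundary chart: the map \(\pi \circ \varphi^{-1}\) is a \(C^1\) map between half-space neighbourhoods with invertible differential. I would then take \(V\) to be the image of a small convex neighbourhood and shrink it until it is convex, so that the segment argument still applies.
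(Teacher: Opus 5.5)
Your argument at interior points is correct, and it takes a different route from the paper. The paper keeps an arbitrary chart \(h\) and simply asserts that the composition of \(h^{-1}\) with the embedding, a \(C^1\) map of rank \(m\), is bi-Lipschitz near \(0\). This works verbatim at boundary points, since \(h\) already lands in \(\HR^m\). You instead pass to the special chart given by orthogonal projection onto \(T_pM\) and prove the bound by hand: \(\pi\) is \(1\)-Lipschitz, and the graph function has derivative of norm at most \(1\) on a ball. This gives the explicit constant \(\sqrt{2}\) and in effect proves the standard fact the paper only cites. The price of using the special chart shows up at the boundary.

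At \(p \in \partial M\) your plan fails as described. Near \(0\), the set \(\pi(U)\) is bounded by the \(C^1\) hypersurface \(\pi(\partial M \cap U)\), which is in general not flat. So \(\pi(U)\) is not a relatively open subset of a half-space, and \(\pi|_U\) is not a boundary chart; \cref{lem: biLipMap} later relies on shrinking chart images to half-balls. For the same reason, \(\pi \circ \varphi^{-1}\) does not in general map between half-space neighbourhoods. Nor can you always shrink to a convex neighbourhood. For \(M = \{(x, y) \in \R^2 : y \ge -x^2\}\) one has \(\pi = \mathrm{Id}\), and every relative neighbourhood of \(0\) contains \((\pm x, -x^2)\) for small \(x\) but not their midpoint \((0, -x^2)\).

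The repair is to drop \(\pi\) as the chart at boundary points. Keep the given chart \(\varphi\), extend \(\varphi^{-1}\) to a \(C^1\) map \(\Phi\) on a full ball \(B(0, r) \subset \R^m\), and linearise. Suppose \(|D\Phi(0) w| \ge c|w|\), and take \(r\) so small that \(\|D\Phi(u) - D\Phi(0)\| \le c/2\) on \(B(0, r)\). The mean value inequality on this convex ball then gives \((c/2)|u - v| \le |\Phi(u) - \Phi(v)| \le (\|D\Phi(0)\| + c/2)|u - v|\). Hence \(\varphi\) is bi-Lipschitz on \(\varphi^{-1}(B(0, r) \cap \HR^m)\). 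This handles interior and boundary points uniformly, and it is exactly the fact the paper invokes. If you prefer to keep your graph estimate, you must extend the graph function to a full ball. You must also show that the straightening map \(\pi \circ \Phi\) and its inverse are locally Lipschitz, so that you return to a genuine chart.
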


        \begin{proof}
            Again, let \(m := \dim M\), \(p \in M\),
            and let \(U \ni p\) be open, with
            \(h \colon U \to \HR^m \) or \(\R^m\),
            and \(h(p) = 0\).
            Suppose that \(f \colon M \to \R^D\) is
            an embedding.
            Then, the map \(f \circ h^{-1} \colon h(U) \to \R^D\) is
            a \(C^1\) map of rank \(m\), and therefore is bi-Lipschitz
            in some neighbourhood \(V_0 \subset \HR^m \) or \(\R^m\)
            of \(0\).
            Then, for each \(x, y \in U_p := h^{-1}(V_0)\), we have
            \[
                L^{-1} |h(x) - h(y)|
                \le 
                |f(x) - f(y)| 
                \le 
                L |h(x) - h(y)|
            \]
            with some \(L \ge 1\).
            Since \(|f(x) - f(y)| = \rho_M(x, y)\), the chart
            \(U_p\) is bi-Lipschitz, and the proposition follows.
        \end{proof}

        Next, we show that \(C^1\) diffeomorphisms between
        manifolds with Lipschitz-compatible metrics
        are bi-Lipschitz.

        \begin{lemma} \label{lem: biLipMap}
            Suppose that \(N\), \(M\) are \(C^1\) manifolds, equipped
            with Lipschitz-compatible (\cref{def: lipComp}) metrics 
            \(\rho_N\), \(\rho_M\).
            Let \(X \subset N\), \(Y \subset M\), and suppose that
            \(\overline{X}\) is a compact subset of \(N\). 
            Let \(F\) be a \(C^1\) diffeomorphism between some
            neighbourhoods of \(\overline{X}\) and \(\overline{Y}\),
            such that \(F(X) = Y\). Then
            \(F|_X \colon (X, \rho_N) \to (Y, \rho_M)\)
            is bi-Lipschitz.
        \end{lemma}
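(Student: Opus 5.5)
The plan is the standard compactness argument: establish local bi-Lipschitz bounds near every point of \(\overline{X}\), then globalise them using compactness of \(\overline{X}\) and a Lebesgue-number argument. By symmetry it suffices to prove that \(F|_X\) is Lipschitz; the same argument applied to \(F^{-1}\) gives the lower bound. To apply it to \(F^{-1}\) we need \(\overline{Y}\) to be compact. This holds because \(F\) is a homeomorphism between neighbourhoods of \(\overline{X}\) and \(\overline{Y}\), so \(F(\overline{X})\) is compact, hence closed, and contains \(Y\). Therefore \(\overline{Y} \subset F(\overline{X})\) and \(\overline{Y}\) is compact.

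\emph{Local step.} Fix \(p \in \overline{X}\). Choose a bi-Lipschitz chart \((U, h)\) of \(N\) around \(p\) and a bi-Lipschitz chart \((U', h')\) of \(M\) around \(F(p)\), both provided by \cref{def: lipComp}. By continuity of \(F\), shrink \(U\) to a neighbourhood \(U_p\) of \(p\) with \(F(U_p) \subset U'\), and such that \(h(U_p)\) is a ball, or a half-ball in the boundary case, which is convex. In coordinates, \(G := h' \circ F \circ h^{-1}\) is \(C^1\) on a neighbourhood of the compact set \(h(\overline{U}_p)\), after shrinking again if necessary. Hence \(\|DG\|\) is bounded there, and the mean value inequality along segments in the convex set \(h(U_p)\) gives that \(G\) is Lipschitz on it. Composing with the bi-Lipschitz charts shows that \(F\) is Lipschitz on \(U_p\) with some constant \(L_p\).

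\emph{Global step.} Cover the compact set \(\overline{X}\) by finitely many sets \(V_{p_i} \subset U_{p_i}\), chosen, say, as \(\rho_N\)-balls \(B(p_i, r_i)\) with \(B(p_i, 2r_i) \subset U_{p_i}\). Let \(r := \min_i r_i\) and \(L_0 := \max_i L_{p_i}\). Take \(x, y \in X\). If \(\rho_N(x, y) < r\), then \(x \in B(p_i, r_i)\) for some \(i\), so both \(x\) and \(y\) lie in \(U_{p_i}\), and therefore \(\rho_M(F(x), F(y)) \le L_0 \rho_N(x, y)\). If \(\rho_N(x, y) \ge r\), use boundedness instead: \(F(\overline{X})\) is compact, so \(D := \diam F(\overline{X}) < \infty\), and \(\rho_M(F(x), F(y)) \le D \le (D/r)\, \rho_N(x, y)\). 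Taking \(L := \max(L_0, D/r)\) gives the upper bound on all of \(X\).

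\emph{Main obstacle.} The delicate point is the local step, and specifically the convexity of \(h(U_p)\), which is what makes the mean value estimate valid; this is why \(U_p\) must be shrunk to the preimage of a (half-)ball. In the boundary case the half-ball \(B(0, R) \cap \HR^m\) is still convex, so the segment argument goes through unchanged. We also need \(F\) to be \(C^1\) on an open set containing the closure of this chart piece, which is why we shrink \(U_p\) once more and use compactness to get a uniform bound on \(\|DG\|\). One last point is that \(U_p\), taken as the preimage of a ball, must contain a \(\rho_N\)-ball around \(p\) for the Lebesgue-type argument. This holds because \(h\) is bi-Lipschitz, so \(h^{-1}\) of a Euclidean ball contains a \(\rho_N\)-ball.
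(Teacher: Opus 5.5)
Your proposal is correct and follows essentially the same route as the paper. The local step is identical: bi-Lipschitz charts, shrinking to a convex (half-)ball, and the mean value inequality for \(G\). Both then globalise by compactness, but differently: the paper argues by contradiction, extracting subsequences \(x_n, y_n \to p\) and using boundedness of \(\overline{Y}\), while you use a finite cover with a Lebesgue-type radius and the bound \(\diam F(\overline{X}) < \infty\) for far-apart points.

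One small fix is needed in your claim that \(U_p\) contains a \(\rho_N\)-ball. Bi-Lipschitzness of \(h\) on \(U\) alone does not give this, since it says nothing about points outside \(U\). You also need \(U\) to be \(\rho_N\)-open, that is, \(\rho_N\) must induce the manifold topology. Alternatively, intersect your balls with \(\overline{X}\): on \(\overline{X}\) the two topologies coincide by compactness, and that is all your Lebesgue argument uses.
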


        \begin{proof}
            Since \(\overline{X}\) is compact,
            \(\overline{Y} = F(\overline{X})\) is compact as well.
            We prove the Lipschitz estimate for \(F\) and then
            apply the same argument for \(F^{-1}\).
            Consider a neighbourhood \(\Omega_X \supset \overline{X}\),
            in which \(F\) is a diffeomorphism.
            Put \(\Omega_Y := F(\Omega_X)\).

            \smallskip

            Consider some \(x \in \overline{X}\), and \(y := F(x)\).
            Choose compact coordinate neighbourhoods
            \(U_x \subset \Omega_X\) of \(x\) and
            \(U_y \subset \Omega_Y\) of \(y\).
            Fix \(C^1\) bi-Lipschitz charts
            \(h_x \colon U_x \to V_x\), \(h_y \colon U_y \to V_y\).
            Shrinking \(U_x\), \(U_y\) if necessary, we assume
            that \(V_x, V_y\)
            are closed (half) balls, and that
            \(F(U_x) \subset U_y\).
            Then, the map 
            \(G := h_y \circ F \circ h_x ^ {-1} \colon V_x \to V_y\) is
            of class \(C^1\).
            The differential norm bound yields that \(G\) is Lipschitz.
            Indeed, for each \(p, q \in V_x\), the mean value
            inequality (applicable since \(V_x\) is convex) gives
            \[
                |G(p) - G(q)|
                \le 
                \sup_{t \in V_x} \|D G_t \| |p - q| 
                \le 
                L_x |p - q|.
            \]
            Thus, \(F\) is Lipschitz on \(U_x\) with some 
            constant \(L_x '\), which may depend on \(x\).

            \smallskip

            Suppose now that
            \[
                \frac{\rho_M(F(x_n), F(y_n))}{\rho_N(x_n, y_n)}
                \to \infty
            \]
            for some \(x_n, y_n \in X\).
            Passing to a subsequence, we obtain
            \(x_n \to x^*\) and \(y_n \to y^*\) for some
            \(x^*, y^* \in \overline{X}\). Since \(\rho_M(F(x_n), F(y_n))\) is bounded
            above (\(\overline{Y}\) is compact), it follows that
            \(\rho_N(x_n, y_n) \to 0\) and \(x^* = y^* =: p\).
            We get \(x_n, y_n \in U_p\)
            for large enough \(n\).
            Consequently, the ratio is at most \(L_p '\), which
            is a contradiction, so \(F\) is Lipschitz, and
            \(F^{-1}\) is Lipschitz analogously.
        \end{proof}

%% file: chapters/manifolds.tex
\section{Manifold case} \label{sec: manifolds}

    Now we are ready to apply the preceding machinery to
    obtain concrete families of \cubedec spaces.
    We will focus on manifolds, both
    abstract and embedded in Euclidean space \(\Rd\).

    \medskip

    \subsection{Bi-Lipschitz graphs}

        The most naive approach is to consider a
        metric space that is a
        bi-Lipschitz image of
        the whole Euclidean space \(\Rd\).
        It immediately follows from \cref{lem: biLip}
        that any space of this type is \cubedec.
        Examples include
        graphs of Lipschitz functions.
        Such graphs are the basic building
        blocks of the theory of uniform rectifiability
        developed in \cite{DS97, DS91}.

        \begin{proposition} \label{prop: lipGraph}
            Let \(f \colon \R^{d-1} \to \R\) be Lipschitz with
            constant \(L\). Then the graph
            \[
                \Gamma_f := \{ (x, f(x)) : x \in \R^{d-1} \} \subset \Rd,
            \]
            equipped with the Euclidean metric,
            is bi-Lipschitz equivalent to \(\R^{d-1}\).
        \end{proposition}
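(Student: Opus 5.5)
The plan is to use the projection onto the first \(d-1\) coordinates as the bi-Lipschitz map. Define \(\pi \colon \Gamma_f \to \R^{d-1}\) by \(\pi(x, f(x)) := x\), or equivalently its inverse \(F \colon \R^{d-1} \to \Gamma_f\), \(F(x) := (x, f(x))\). By construction \(F\) is a bijection onto \(\Gamma_f\), so \(F(\R^{d-1}) = \Gamma_f\) as required in \cref{def: biLip}. It remains to compare \(|F(x) - F(y)|\) with \(|x - y|\) for \(x, y \in \R^{d-1}\).

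The computation is direct. For the lower bound,
\[
    |F(x) - F(y)|^2 = |x - y|^2 + |f(x) - f(y)|^2 \ge |x - y|^2 .
\]
For the upper bound, the Lipschitz property of \(f\) gives
\[
    |F(x) - F(y)|^2 \le (1 + L^2)\, |x - y|^2 .
\]
Hence
\[
    |x - y| \le |F(x) - F(y)| \le \sqrt{1 + L^2}\, |x - y|,
\]
so \(F\) is bi-Lipschitz with constant \(\sqrt{1 + L^2} \ge 1\) in the sense of \cref{def: biLip}.

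There is no real obstacle here. The only point worth recording is that the bi-Lipschitz constant depends only on \(L\). As a consequence, \cref{prop: cubeGood} together with \cref{lem: biLip} shows that \(\Gamma_f\) is \cubedec.
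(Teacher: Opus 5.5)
Your proposal is correct and matches the paper's proof essentially verbatim. The paper also uses the parametrisation \(\Phi(x) = (x, f(x))\) and the same two-sided estimate \(|x - y| \le |\Phi(x) - \Phi(y)| \le \sqrt{1 + L^2}\,|x - y|\).
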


        \begin{proof}
            Consider \(\Phi \colon \R^{d-1} \to \Gamma_f\) given by
            \(\Phi(x) = (x, f(x))\).
            For \(x, y \in \R^{d-1}\),
            \[
                |x - y|
                \le
                |\Phi(x) - \Phi(y)|
                =
                \sqrt{|x-y|^2 + |f(x) - f(y)|^2}
                \le
                \sqrt{1 + L^2} \, |x - y|,
            \]
            so \(\Phi\) is a bi-Lipschitz bijection onto \(\Gamma_f\).
        \end{proof}

        More exotic examples include
        certain Alexandrov spaces
        with additional properties, studied
        in \cite{BL03}.

        \medskip

        The bi-Lipschitz-image approach, however, does
        not exhaust all manifolds of interest.
        Even among metric spaces that are otherwise
        well-behaved, there exist spaces
        admitting no bi-Lipschitz parametrisation by
        \(\Rd\), as shown in \cite{Sem96} (see also \cite{Sem93}).
        This motivates the triangulation-based
        argument developed below, which may be applied to
        manifolds with nontrivial topology.

        \medskip

    \subsection{Simplicial complexes}

        To deal with manifolds, we need to split
        them into simplices.
        Recall the definition of a triangulation.

        \begin{definition} \label{def: simplComplex} [\cite{Mun66}, Def. 7.1].
            Recall that a \textbf{(simplicial) complex} 
            \(K\) is a collection of
            (open) nondegenerate simplices in \(\R^D\) such that
            \begin{itemize}
                \item[(1)] every face of a simplex of \(K\) is in \(K\);
                \item[(2)] the intersection of two closed simplices 
                    of \(K\) is either empty or a face of each of them;
                \item[(3)] each point of \(|K| := \bigcup_{S \in K}
                    S\) has a neighbourhood meeting only finitely
                    many simplices of \(K\).
            \end{itemize}
            We call \(K\) \textbf{finite} if it
            has finitely many simplices.
            We call a simplex \(S \in K\) \textbf{maximal}
            if it is not a face of another simplex from \(K\).
        \end{definition}

        We equip a connected simplicial complex with the
        following metric \(\rho_K \).
        On each simplex \(S \in K\)
        of dimension \(m\) we set
        \[
            \rho_S(F(x), F(y))
            :=
            |x - y|
        \]
        where \(F \colon \overline{\Delta}^m \to \overline{S} \)
        is the affine parametrisation of \(S\).
        It is easy to check that these definitions agree
        on the shared faces of adjacent simplices.
        We extend \(\rho_S\) to \(|K|\) as an
        intrinsic metric. Formally, put 
        \[
            \rho_K(x, y) 
            := 
            \inf \left( \sum_{j = 0}^{n - 1} 
            \rho_{S_j} (x_j, x_{j + 1}) \right),
        \]
        where the infimum is taken over all \(x_0, \dots, x_n \in |K|\)
        such that \(x_0 = x, \ x_n = y\) and
        \(x_j, x_{j + 1}\) lie in a common closed simplex
        \(\overline{S}_j\).
        One may check that \(\rho_K\) and \(\rho_S\) 
        agree on \(\overline{S}\) for each \(S \in K\).

        Next, we give definitions of types of
        triangulations, which we are working with.

        \begin{definition} \label{def: triangulation}
            [\cite{Mun66}, Def. 8.1, 8.3, Thm. 8.4].
            Let \(K\) be a simplicial complex and \(M\) a
            \(C^1\) manifold. 
            A map \(f \colon |K| \to M\) is of
            \textbf{class \(C^1\)}
            if, for each simplex \(S \in K\),
            \(f|_{\overline{S}} \)
            is of class \(C^1\),
            and \textbf{nondegenerate}
            if, in addition, \(f|_{\overline{S}}\)
            has rank \(\dim S\) at every point of \(\overline{S}\).

            A \textbf{\(C^1\) triangulation} of \(M\) is
            a homeomorphism \(\tau \colon |K| \to M\)
            that is nondegenerate and of class \(C^1\).
            It is \textbf{finite} if \(K\) is finite.

            \smallskip

            Analogously, if \((X, \rho)\) is a metric space, we
            call \(\tau \colon (|K|, \rho_K) \to (X, \rho) \) 
            a \textbf{bi-Lipschitz triangulation} 
            of \((X,\rho)\)
            if, instead, \(\tau\) is bi-Lipschitz.
        \end{definition}

        We will use the following well-known lemma.

        \begin{lemma} \label{lem: complexCompact}
            \(|K|\) is compact if and only if
            \(K\) is finite.
        \end{lemma}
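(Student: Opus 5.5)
We prove both implications directly. The only thing to track carefully is how the metric on \(|K|\) relates to the Euclidean topology, so we fix the ambient \(\R^D\) from \cref{def: simplComplex} and work with the subspace topology there.

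\emph{If \(K\) is finite, then \(|K|\) is compact.} Write \(K = \{S_1, \dots, S_n\}\). Since \(K\) contains all faces of its simplices, every point of \(\overline{S}_i\) lies in some simplex of \(K\). Hence
\[
|K| = \bigcup_{i=1}^n S_i = \bigcup_{i=1}^n \overline{S}_i.
\]
Each closed simplex \(\overline{S}_i\) is the convex hull of finitely many points, so it is compact. A finite union of compact sets is compact.

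\emph{If \(|K|\) is compact, then \(K\) is finite.} By property (3) of \cref{def: simplComplex}, each \(x \in |K|\) has an open neighbourhood \(U_x\) that meets only finitely many simplices of \(K\). The sets \(U_x\) cover \(|K|\), so by compactness finitely many of them, say \(U_{x_1}, \dots, U_{x_m}\), already cover \(|K|\). Every simplex \(S \in K\) is nonempty, so it contains a point of \(|K|\). That point lies in some \(U_{x_i}\), so \(S\) meets \(U_{x_i}\). Thus every simplex of \(K\) is among the finitely many simplices met by \(U_{x_1}, \dots, U_{x_m}\), and \(K\) is finite.

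\emph{Topology.} Compactness must be understood for \(\rho_K\) if the lemma is applied to \((|K|, \rho_K)\). When \(K\) is finite, \(\rho_K\) and the Euclidean metric are bi-Lipschitz equivalent on \(|K|\). On each \(\overline{S}\), \(\rho_K\) coincides with \(\rho_S\), which is bi-Lipschitz to the Euclidean metric via the affine parametrisation. With finitely many simplices the constants are uniform, and a chaining argument through shared faces, the same mechanism as in \cref{lem: biLipMap}, extends the comparison across the whole complex. In that case the two topologies agree, so either notion of compactness gives the same statement.

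The only subtle point is this choice of topology. The combinatorial argument itself is standard.
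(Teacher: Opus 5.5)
Your proof is correct and is essentially the paper's argument: a finite union of compact closed simplices in one direction, and in the other the open cover by the locally finite neighbourhoods from property (3). You also make explicit a step the paper leaves implicit, namely that every nonempty simplex meets one of the finitely many chosen neighbourhoods.

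Your closing topology paragraph is not needed, since \cref{thm: geodesicCubeDecomp} applies the lemma to \(|K|\) with its Euclidean subspace topology, in which \(\tau\) is a homeomorphism. As written, its bi-Lipschitz claim is only sketched: chaining yields just the easy bound \(|x-y| \lesssim \rho_K(x,y)\), and agreement of the two topologies, which follows from a local argument in the star of a point, would suffice anyway.
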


        \begin{proof}
            If \(K\) is finite, \(|K|\) is a finite union of
            closed simplices \(\overline{S}\), each compact,
            so \(|K|\) is compact.
            Conversely, the neighbourhoods from property (3)
            of \cref{def: simplComplex}
            form an open cover of \(|K|\).
            It does not have a finite subcover when \(K\) is infinite,
            since each neighbourhood meets only finitely 
            many simplices.
        \end{proof}

        \medskip

        The next lemma is the main tool to deal with
        manifolds.
        
        \begin{lemma} \label{lem: complexCubeDecomp}
            Let \(K\) be a connected simplicial complex.
            Then, \((|K|, \rho_K)\) is \cubedec.
        \end{lemma}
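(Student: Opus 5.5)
Recall that a set is \cubedec in \(X\) if it admits connected dyadic cubes whose inner balls in property (c) are taken in \(X\). The plan is to write \(|K|\) as a disjoint union of open simplices, show each is \cubedec in \(|K|\), and conclude with the gluing lemma, \cref{lem: glue}.

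First I would restrict to the \emph{maximal} simplices rather than all of \(K\), because lower-dimensional faces contain no ball of \(|K|\). Every point of \(|K|\) lies in the closure of some maximal simplex, since every simplex is a face of a maximal one (chains of faces have bounded length, as simplices live in \(\R^D\)). So \(|K| = \bigcup_{S \text{ maximal}} \overline{S}\), and the open maximal simplices \(S\) are pairwise disjoint. This gives the decomposition needed for \cref{lem: glue}, with \(X_j := S\).

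Next I would check the hypotheses of \cref{lem: glue}.

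For (a), each \(\overline{S}\) with \(\rho_K = \rho_S\) is isometric to \(\overline{\Delta}^m\), hence doubling.

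For (b), the affine parametrisation \(F\colon \Delta^m \to S\) is an isometry onto \((S,\rho_K)\). The open regular simplex \(\Delta^m\) is \cubedec in \(\R^m\) by \cref{col: convex} and the remark after it, with some scaling value \(\delta\) and ball constant \(C_m\). This gives dyadic cubes in \(S\) with outer balls of the right size. To get \(\delta\) common to all \(S\), I would use \cref{cor: goodScaling}.

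For (c), \(C_m\) depends only on \(m \le D\), so only finitely many constants occur and they are uniformly bounded.

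The main obstacle is the inner-ball condition of (c) with balls taken in \(|K|\) rather than in \(S\). I need \(B_{|K|}(x, C^{-1}\delta^k) \subset S\) whenever the ball \(B_S(x, C^{-1}\delta^k)\) lies in the open cube, with the constant possibly enlarged.

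For this I would argue that, for \(x\) in the open maximal simplex \(S\), the \(\rho_K\)-ball of radius \(r \le \dist_{\R^m}(F^{-1}x, \partial\Delta^m)\) stays in \(S\). Any chain \(x_0 = x, x_1, \dots\) in the definition of \(\rho_K\) that leaves \(S\) must pass through a point of \(\partial S\). That is because \(S\) is maximal, so no other closed simplex meets the open \(S\): by property (2) their intersection would be a face of both. The step from \(x_0\) to this boundary point has length at least the distance to \(\partial S\).

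Since the inner ball \(B_S(x_\alpha^k, C^{-1}\delta^k)\) is contained in \(S\), which is open in \(\R^m\)-coordinates, the distance from \(x_\alpha^k\) to \(\partial S\) is at least \(C^{-1}\delta^k\). Hence \(B_{|K|}(x_\alpha^k, C^{-1}\delta^k) = B_S(x_\alpha^k, C^{-1}\delta^k) \subset Q_\alpha^k\), so each open maximal simplex is \cubedec in \(|K|\). Applying \cref{lem: glue} then shows that \(|K|\) is \cubedec.

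Connectedness of \(K\) is not really needed for this step, only for \(\rho_K\) to be a finite metric. If \(K\) were infinite, local finiteness (property (3)) still makes the argument go through, because the constants depend only on dimension.
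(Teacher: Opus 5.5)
Your proposal is correct and follows the paper's proof step by step. Both restrict to the maximal simplices, transport the decomposition of $\Delta^m$ (cube-decomposable in $\R^m$ by \cref{col: convex}, with a common scaling value) via the affine isometry, show that the inner $\rho_K$-balls coincide with the affine-hull balls because any chain of length $<r$ leaving $S$ must first reach $\partial S$, and conclude with \cref{lem: glue}.

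One thing to state precisely: the inner ball you call $B_S$ must be the ball in the affine hull $A\cong\R^m$, which is what \cref{col: convex} provides. The ball in $S$ with the restricted metric would not give the bound on the distance to $\partial S$.
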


        \begin{proof}
            Consider all maximal simplices of \(K\),
            denoted by \(\{ S_i \}_{i \in \Lambda} \).
            Let \(\theta \in (0, 1)\).
            For each \(m\), fix a connected
            dyadic cube decomposition of \(\Delta^m \) in \(\R^m\)
            (it is \cubedec in \(\R^m\) by \cref{col: convex})
            with scaling value \(\theta\), which is possible
            by \cref{cor: goodScaling}.

            \smallskip

            Fix some \(S = S_i\).
            Denote by \(A \cong \R ^ m\) the affine hull of \(S\).
            Consider the affine parametrisation
            \(F \colon \R^m \to A\), with \(F(\Delta^m) = S\).
            We denote by \(\rho_A\) the metric on \(A\)
            induced by \(F\). On \(S\) it agrees with \(\rho_S\),
            considered in the definition of \(\rho_K\).
            Since \(F\) is an isometry, \cref{lem: biLipIn} yields that 
            \(S\) admits a dyadic cube decomposition \(\mathcal{Q}\) with
            scaling value \(\theta\) and some ball constant
            \(C_m > 0\), depending only on dimension \(m\).
            Moreover, \(\mathcal{Q}\) is a dyadic cube decomposition
            of \(S\) in \(A\).

            \smallskip

            We claim that it is also a decomposition in \(|K|\).
            Indeed, consider some dyadic cube \(Q \in \mathcal{Q}\) 
            with inner ball \(B = B(x, r)\), where \(x \in Q\).
            We prove that \(B\), considered in \(|K|\), 
            is a ball with centre \(x\) and radius \(r\) as well.
            Take some \(y \in |K|\) with \(\rho_K(x, y) < r\).
            Suppose that \(y \notin \overline{S}\).
            Consider a chain \(x = x_0, \dots, x_n = y\)
            with \(x_j, x_{j + 1} \in \overline{S}_j\) and
            \(s := \sum_{j = 0}^{n - 1} \rho_{S_j}(x_j, x_{j + 1}) < r \).
            Take the minimal \(l \ge 0\) with \(x_{l + 1} \notin \overline{S}\)
            (it exists because \(x_n = y \notin \overline{S} \) ).
            We have that \(x_l \in \overline{S}\) and \(x_{l + 1} \notin \overline{S}\), 
            hence \(S_l \neq S\) and \(x_l \in \overline{S} \cap \overline{S}_l\),
            which is some (proper) face of the maximal simplex \(S\).
            Therefore, \(x_l \in \partial S\). 
            Since \(x_0, \dots, x_l \in \overline{S}\) and \(\rho_K\)
            agrees with \(\rho_A \) on \(\overline{S}\), the triangle inequality yields
            \[ 
                \rho_A(x, x_l)
                =
                \rho_K(x, x_l)
                \le 
                \sum_{j = 0}^{l - 1} \rho_{S_j}(x_j, x_{j + 1})
                \le 
                s < r,
            \]
            which is impossible since \(B \subset Q \subset S\).
            We conclude that \(y \in \overline{S}\) and
            \(\rho_A(x, y) = \rho_K(x, y) < r\),
            so \(y \in B\).
            Consequently, \(S\) is \cubedec in \(|K|\).

            \smallskip

            By the definition of a complex,
            \(K\) is embedded in Euclidean space \(\R^D\),
            hence \(m \le D\).
            We have that 
            \(|K| = \bigcup_{i \in \Lambda} \overline{S}_i \),
            the \(S_i\) are pairwise disjoint, each
            \(\overline{S}_i \) with \(\rho_K\) is isometric
            to \(\overline{\Delta}^m\), hence compact and doubling.
            Consequently, \cref{lem: glue} yields that \(|K|\) 
            is \cubedec.
        \end{proof}

        \medskip

    \subsection{Triangulations}

        The important lemma, which we would like to
        apply to manifolds, is stated as follows.

        \begin{lemma} \label{lem: biLipTriang}
            Suppose that \(X\) is a metric space
            that admits a bi-Lipschitz
            triangulation. Then, \(X\) is \cubedec.
        \end{lemma}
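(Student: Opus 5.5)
The proof combines two earlier results: \cref{lem: complexCubeDecomp} gives a cube decomposition of the complex, and \cref{lem: biLip} transports it along the triangulation.

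Let \(\tau \colon (|K|, \rho_K) \to (X, \rho)\) be a bi-Lipschitz triangulation of \(X\), as in \cref{def: triangulation}. By \cref{def: biLip}, \(\tau\) is surjective, and the lower Lipschitz bound makes it injective. So \(\tau\) is a bi-Lipschitz bijection between \((|K|, \rho_K)\) and \((X, \rho)\).

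\begin{enumerate}
\item Check that \(K\) is connected. The metric \(\rho_K\) is only introduced for a connected simplicial complex, and \cref{lem: complexCubeDecomp} assumes connectedness. Since \(X\) is assumed connected throughout the paper and \(\tau\) is a homeomorphism, \(|K|\) is connected. If \(|K|\) were disconnected, some point would have no finite chain to another, and \(\rho_K\) would not even be a finite metric. This is the only point where some care is needed, and it is essentially bookkeeping about conventions rather than a real obstacle.

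\item Apply \cref{lem: complexCubeDecomp}. It gives that \((|K|, \rho_K)\) is \cubedec, i.e.\ it carries a connected dyadic cube decomposition \(P_\alpha^k\) with some scaling value \(\delta\) and ball constant \(a\).

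\item Push the decomposition forward by \cref{lem: biLip}, applied with the bi-Lipschitz map \(\tau\). Concretely, the cubes \(Q_\alpha^k := \tau(P_\alpha^k)\) work:
\begin{itemize}
\item properties (a) and (b) are preserved because \(\tau\) is a bijection;
\item property (d), connectedness, is preserved because \(\tau\) is continuous;
\item property (c) holds with ball constant \(La\), by the ball inclusions computed in \cref{lem: biLipIn}.
\end{itemize}
Hence \(X\) is \cubedec.
\end{enumerate}
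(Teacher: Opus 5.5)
Your proposal is correct and follows the paper's proof: apply \cref{lem: complexCubeDecomp} to \((|K|, \rho_K)\), then transport the connected dyadic cubes along the bi-Lipschitz map \(\tau\) via \cref{lem: biLip}. Your remark that \(K\) must be connected makes explicit a hypothesis the paper uses only implicitly, since \(\rho_K\) is defined only for connected complexes.
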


        \begin{proof}
            Suppose that
            \(\tau \colon (|K|, \rho_K) \to (X, \rho)\)
            is bi-Lipschitz.
            By \cref{lem: complexCubeDecomp}, \(|K|\) with the metric
            \(\rho_K\) is \cubedec, so, by \cref{lem: biLip},
            \(X\) is \cubedec as well.
        \end{proof}

        \begin{definition} \label{def: boundedGeometry} [\cite{CGT82, Eic07}].
            A smooth Riemannian manifold \(M\) without boundary 
            has \textbf{bounded geometry}
            if its sectional curvature is uniformly bounded,
            and its injectivity radius is bounded away from zero.
        \end{definition}

        It follows from the result [\cite{Att94}, Thm. 1.14] 
        that smooth manifolds
        (not necessarily compact) with bounded geometry
        admit a bi-Lipschitz triangulation, and therefore
        are \cubedec by \cref{lem: biLipTriang}.
        A more recent approach, based on Delaunay
        triangulations, together with precise constants
        and further generalisations, is developed
        in \cite{BDG18, Bow20}.

        \medskip

        A similar fact for compact spaces
        holds without any additional geometric
        assumption.

        \begin{theorem} \label{thm: geodesicCubeDecomp}
            Suppose that \(M\) is a compact connected \(C^1\) manifold,
            equipped with a Riemannian metric \(g\).
            Then, \(M\) with the geodesic metric \(\rho_g\)
            is \cubedec.
        \end{theorem}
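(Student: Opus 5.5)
The plan is to reduce the theorem to \cref{lem: biLipTriang}: it suffices to produce a bi-Lipschitz triangulation \(\tau \colon (|K|, \rho_K) \to (M, \rho_g)\).

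First, the Cairns--Whitehead theorem \cite{Cai35, Whi40, Mun66} gives a \(C^1\) triangulation \(\tau \colon |K| \to M\) in the sense of \cref{def: triangulation}. Since \(M\) is compact and \(\tau\) is a homeomorphism, \(|K|\) is compact, so \(K\) is finite by \cref{lem: complexCompact}. Moreover, \(|K|\) is connected because \(M\) is, so \(\rho_K\) is a genuine metric. By \cref{prop: geodesicLipComp}, \(\rho_g\) is Lipschitz-compatible.

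Next, we show that \(\tau\) is Lipschitz. Fix a closed simplex \(\overline{S}\), of which there are finitely many. The restriction \(\tau|_{\overline{S}}\) is \(C^1\), and \(\overline{S}\) is convex and compact. Covering \(\tau(\overline{S})\) by finitely many bi-Lipschitz charts, the argument of \cref{lem: biLipMap} (a mean value inequality on small convex pieces combined with a compactness contradiction) shows that \(\tau|_{\overline{S}}\) is \(L\)-Lipschitz from \((\overline{S}, \rho_S)\) to \((M, \rho_g)\), with one constant \(L\) for all simplices. For arbitrary \(x, y\), take a chain \(x_0, \dots, x_n\) as in the definition of \(\rho_K\). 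Then
\[
\rho_g(\tau x, \tau y) \le \sum_j \rho_g(\tau x_j, \tau x_{j+1}) \le L \sum_j \rho_{S_j}(x_j, x_{j+1}),
\]
and taking the infimum over chains gives \(\rho_g(\tau x, \tau y) \le L \rho_K(x, y)\).

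The main obstacle is the lower bound \(\rho_K(x, y) \lesssim \rho_g(\tau x, \tau y)\). On a single closed simplex, nondegeneracy (full rank of \(D\tau\) at every point of \(\overline{S}\)) together with compactness makes \(\tau|_{\overline{S}}\) locally bi-Lipschitz in charts, and injectivity makes it globally bi-Lipschitz on \(\overline{S}\) by a compactness argument. The difficulty is pairs \(x, y\) lying in different simplices, where the charts of \(\tau(\overline{S})\) and \(\tau(\overline{S'})\) do not glue smoothly. I would argue by contradiction. Suppose \(\rho_K(x_n, y_n) / \rho_g(\tau x_n, \tau y_n) \to \infty\). Since \(|K|\) is compact and \(\rho_K\) is bounded, we get \(\rho_g(\tau x_n, \tau y_n) \to 0\), hence \(\tau x_n, \tau y_n \to \tau p\) and, \(\tau\) being a homeomorphism, \(x_n, y_n \to p\). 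Near \(p\), work in the star of the vertex or face containing \(p\) and in a single bi-Lipschitz chart \(h\) at \(\tau(p)\). Then \(G := h \circ \tau\) is, on each closed simplex of the star, a \(C^1\) map of full rank, and these pieces together form an injective continuous map.

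The key local claim is that such a piecewise \(C^1\) nondegenerate homeomorphism onto a neighbourhood is bi-Lipschitz near \(p\). To prove it, I would compare \(G\) with its piecewise-linear approximation \(\Lambda\) given by the differentials \(D(G|_{\overline{S}})(p)\) on each simplex of the star. This map \(\Lambda\) is a piecewise-linear map that is injective near \(p\), since it is the linearisation of a homeomorphism with nondegenerate pieces. This is the delicate point; Munkres' theory of nondegenerate maps \cite{Mun66} (Thm. 8.4 and the secant-map arguments) supplies it. A piecewise-linear homeomorphism of a cone is bi-Lipschitz, and \(G - \Lambda\) has small Lipschitz constant near \(p\) on each simplex. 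Hence \(G\) is bi-Lipschitz on a small neighbourhood of \(p\). This contradicts the ratio blowing up, so \(\tau\) is bi-Lipschitz, and \cref{lem: biLipTriang} finishes the proof.
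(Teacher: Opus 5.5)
Your reduction to \cref{lem: biLipTriang}, the finiteness of \(K\), the per-simplex bounds and the chain argument for the upper bound \(\rho_g(\tau x,\tau y)\le L\rho_K(x,y)\) are essentially what the paper does. The two routes diverge only at the lower bound for points lying in different simplices.

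The paper never analyses how simplices meet at a point; it uses that \(\rho_g\) is a length metric. Given any path \(\gamma\) from \(\tau(x)\) to \(\tau(x')\), put \(\beta=\tau^{-1}\circ\gamma\). Choose \(0=t_0<t_1<\dots<t_n=1\) inductively: take a simplex \(S_j\) whose closure contains \(\beta(t_j)\) and \(\beta(t)\) for some \(t>t_j\), which is possible since the open star of \(\beta(t_j)\) is open, and set \(t_{j+1}:=\sup\{t:\beta(t)\in\overline{S}_j\}\). No simplex is used twice, so the process stops. The per-simplex bound alone then gives \(\len(\gamma)\ge\sum_j\rho_g(\gamma(t_j),\gamma(t_{j+1}))\ge L^{-1}\sum_j\rho_K(\beta(t_j),\beta(t_{j+1}))\ge L^{-1}\rho_K(x,x')\).

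Your local linearisation argument, globalised by compactness, is sound in outline. Its advantage is that it does not need the length structure, so it would apply directly to any Lipschitz-compatible metric; the paper obtains that case afterwards via \cref{lem: biLipMap} in \cref{cor: lipCompCubeDecomp}. It is, however, considerably heavier, and it leans on three facts you only assert:
\begin{itemize}
\item the Whitehead--Munkres theorem that \(d\tau_p\) is injective on the star of \(p\), which needs injectivity of \(\tau\) and a degree-type argument, not just the rank condition;
\item the lemma that an injective piecewise-linear map of a finite cone is bi-Lipschitz for the intrinsic metric;
\item a chain argument inside the star showing that \(G-\Lambda\) is small-Lipschitz with respect to \(\rho_K\), not merely on each simplex separately.
\end{itemize}
The paper's path-splitting trick makes all three unnecessary.
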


        \begin{proof}
            By the classical Cairns--Whitehead triangulation theorem
            \cite{Cai35, Whi40} (a more modern version is presented
            in [\cite{Mun66}, Thm. 10.6]), a \(C^1\) manifold
            \(M\) admits a \(C^1\) triangulation
            \(\tau \colon |K| \to M\).
            Since \(M\) is compact, \(|K| = \tau^{-1}(M)\)
            is compact, and therefore \(K\) is finite
            by \cref{lem: complexCompact}.

            \smallskip

            Let \(S \in K\) be a maximal simplex, 
            so \(\dim S = \dim M =: d\). 
            Denote \(Y := \tau(S)\).
            Let \(A \cong \R^d\) be the affine hull of
            \(S\) with the Euclidean metric.

            A standard argument [\cite{Mun66}, Thm. 1.5]
            gives that \(\tau|_{\overline{S}}\)
            extends to a \(C^1\) map \(h\)
            from some neighbourhood \(\Omega_S \supset \overline{S}\) 
            in \(A\) to \(M\).
            Since \(\tau\) is nondegenerate, \(h\) has rank \(d\)
            at every point of \(\overline{S}\), and, since the set
            \(\{x \mid \det D h (x) \neq 0 \}\) is open, we may shrink
            \(\Omega_S\) so that \(h\) has rank \(d\) on all of \(\Omega_S\).
            Finally, we check that \(h\) is injective
            on some neighbourhood of \(\overline{S}\).
            Suppose not. Then \(h(x_n) = h(y_n)\) for some sequences
            \(x_n, y_n \in A\) with \(x_n \neq y_n\), such that
            \(\dist(x_n, \overline{S}) \to 0\) and \(\dist(y_n, \overline{S}) \to 0\).
            Passing to a subsequence, we obtain
            \(x_n \to x^* \in \overline{S}\), \(y_n \to y^* \in \overline{S}\).
            By continuity, \(\tau(x^*) = h(x^*) = h(y^*) = \tau(y^*)\),
            so, since \(\tau\) is injective, \(x^* = y^* =: p\).
            This contradicts the fact that \(h\) is a diffeomorphism on a
            neighbourhood of \(p\).
            Hence, \(h \colon \Omega_S \to M\) is an embedding, i.e. a
            diffeomorphism from \(\Omega_S\) onto its image.

            We may therefore apply \cref{lem: biLipMap} to
            \(A\) (with the Lipschitz-compatible Euclidean metric),
            and to \(M\) with \(\rho_g\), which is Lipschitz-compatible
            by \cref{prop: geodesicLipComp}.
            This gives that \(\tau|_{\overline{S}}\) is a bi-Lipschitz map from
            \((\overline{S}, |\cdot|) \) to \((\overline{Y}, \rho_g)\). Since the identity map 
            \(\mathrm{Id} \colon (\overline{S}, \rho_K) \to (\overline{S}, |\cdot|) \) 
            is bi-Lipschitz, the composition is bi-Lipschitz
            as well with some constant \(L(S)\).
            Put \(L := \max_{S \in K, \, \dim S = d} L(S)\).
            Since \(K\) is finite, \(L < +\infty\).
            Note that the same constant works for nonmaximal
            simplices, since each of them is a proper face of a
            maximal one with restricted metric. Hence,
            \[ 
                L^{-1} \rho_K(p, q) 
                \le 
                \rho_g(\tau(p), \tau(q)) 
                \le 
                L \rho_K(p, q)
            \]
            for each simplex \(S \in K\) and \(p, q \in \overline{S}\).

            \smallskip

            We claim that
            \(\tau \colon (|K|, \rho_K) \to (M, \rho_g)\) is
            globally \(L\)-bi-Lipschitz.
            Take some \(x, x' \in |K|\), and let 
            \(y = \tau(x), \ y' = \tau(x')\).
            Firstly, we prove that \(\rho_g(y, y') \le L \rho_K(x, x')\).
            Consider any \(x_0, \dots, x_n \in |K|\) such that
            \(x_0 = x\), \(x_n = x'\), and \(x_j, \ x_{j + 1}\) lie in a
            common closed simplex. 
            We obtain that 
            \[
                \sum_{j = 0}^{n - 1} \rho_K(x_j, x_{j + 1})
                \ge
                L^{-1} \sum_{j = 0}^{n - 1} \rho_g(\tau(x_j), \tau(x_{j + 1}))
                \ge 
                L^{-1} \rho_g(y, y').
            \]
            Taking the infimum over such sequences \(x_j\) gives the 
            upper bound.

            \smallskip

            Next, we prove the reverse inequality, written as
            \(L^{-1} \rho_K(x, x') \le \rho_g(y, y')\).
            Consider a path \(\gamma \colon [0, 1] \to M\)
            from \(y\) to \(y'\), and denote
            \(\beta := \tau^{-1} \circ \gamma \colon [0, 1] \to |K|\).
            We construct a sequence \(t_j \in [0, 1]\).
            We write \(x_j := \beta(t_j) \in |K|\) and 
            \(y_j := \gamma(t_j) \in M\).

            Put \(t_0 := 0\). Given \(t_j < 1\), define \(t_{j + 1}\) as
            follows.
            Consider the open star of \(x_j\),
            denoted by
            \[
                T := \bigcup \{S \colon \ x_j \in \overline{S}\}.
            \]
            Since \(T\) is open, for \(t\) 
            close enough to \(t_j\) we have 
            \(\beta(t) \in T\).
            Fix some \(t^* > t_j\) with \(x^* := \beta(t^*) \in T\).
            Take a simplex \(S_j\) such that
            \(x_j, \ x^* \in \overline{S}_j\).
            Put \(t_{j + 1} := \sup \{ t \in [0, 1]
            \mid \beta(t) \in \overline{S}_j \}\).
            By the choice of \(S_j\), we have \(t_j < t^* \le t_{j + 1}\).
            Consequently, no simplex can be picked twice
            (otherwise, \(t_i < t_j \) and \(\beta(t_j) \in \overline{S}_i \),
            which contradicts the choice of \(t_{i + 1}\)).
            Since there are finitely many simplices,
            we make only finitely many steps.

            The process finishes only when \(t_n = 1\).
            Hence, \(x_0 = x\) and \(x_n = x'\).
            By definition, \(x_j\) and \(x_{j + 1}\) lie 
            in \(\overline{S}_j\), hence
            \[
                \begin{split}
                    \len(\gamma)
                    & = 
                    \sum_{j = 0}^{n - 1} \len(\gamma|_{[t_j, t_{j + 1}] }) \\
                    & \ge  
                    \sum_{j = 0}^{n - 1} \rho_g(y_j, y_{j + 1}) \\
                    & \ge 
                    \sum_{j = 0}^{n - 1} L^{-1} \rho_K(x_j, x_{j + 1}) \\
                    & \ge 
                    L^{-1} \rho_K(x, x'). 
                \end{split}
            \]
            Taking the infimum over \(\gamma\), we obtain the
            lower bound on \(\rho_g(y, y')\).

            By \cref{lem: biLipTriang}, \(M\) is therefore
            \cubedec.
        \end{proof}

        Note that an Ahlfors--David regular positive
        measure on a compact metric space is finite.
        Therefore, \cref{thm: geodesicCubeDecomp}, combined with
        \cref{thm: partitionConnectedIntro},
        proves \cref{thm: manifoldIntro}.

        Next, we extend the preceding result to the
        case of an arbitrary Lipschitz-compatible metric.

        \begin{corollary} \label{cor: lipCompCubeDecomp}
            Suppose that \(M\) is a compact connected \(C^1\) manifold
            with a Lipschitz-compatible metric \(\rho\).
            Then, \((M, \rho)\) is \cubedec.
        \end{corollary}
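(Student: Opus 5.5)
The strategy is to reduce to \cref{thm: geodesicCubeDecomp} via \cref{lem: biLip}. We equip \(M\) with an auxiliary Riemannian metric \(g\); a compact \(C^1\) manifold admits one by the usual partition-of-unity argument. By \cref{thm: geodesicCubeDecomp}, \((M, \rho_g)\) is \cubedec. It then suffices to show that the identity map \(\mathrm{Id} \colon (M, \rho_g) \to (M, \rho)\) is bi-Lipschitz, and to conclude with \cref{lem: biLip}.

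For the bi-Lipschitz claim we use \cref{lem: biLipMap}. Take \(N = M\) with metric \(\rho_g\), which is Lipschitz-compatible by \cref{prop: geodesicLipComp}. Take the target to be \(M\) with \(\rho\), which is Lipschitz-compatible by hypothesis. Let \(X = Y = M\) and \(F = \mathrm{Id}\), which is a \(C^1\) diffeomorphism of \(M\) onto itself. Since \(M\) is compact, \(\overline{X} = M\) is compact, so the lemma applies and \(\mathrm{Id}\) is bi-Lipschitz.

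The main point needing care is that \cref{lem: biLipMap} implicitly uses compactness of \(\overline{X}\) in the topology of \((N, \rho_N)\), and its sequential argument also needs \(\rho_M\)-convergence. So we must check that a Lipschitz-compatible metric induces the manifold topology. This is a local statement. Each bi-Lipschitz chart \(h \colon (U, \rho) \to h(U)\) is a homeomorphism from \(U\) with the \(\rho\)-topology onto \(h(U)\), and it is also a homeomorphism for the manifold topology. Hence the two topologies agree on each \(U\), and the charts cover \(M\).

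One subtlety remains: the chart domain \(U\) should be open in both topologies, so that the local agreement globalises. The domain \(U\) is open in the manifold topology. It is open in the \(\rho\)-topology if, as we may arrange, the charts are chosen around each point with \(U\) a \(\rho\)-neighbourhood. Once the topologies coincide, \((M, \rho)\) is compact. Therefore the subsequence argument in \cref{lem: biLipMap} runs verbatim, and the corollary follows.
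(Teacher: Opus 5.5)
Your proposal is the paper's proof: fix a Riemannian metric \(g\), use \cref{thm: geodesicCubeDecomp}, get that \(\mathrm{Id}\colon (M,\rho_g)\to(M,\rho)\) is bi-Lipschitz from \cref{lem: biLipMap} together with \cref{prop: geodesicLipComp}, and conclude by \cref{lem: biLip}. Your last two paragraphs are not needed, because the proof of \cref{lem: biLipMap} works entirely in the manifold topology and only uses that \(\rho\) is continuous there, which is immediate from the upper bound in each bi-Lipschitz chart; also, the phrase ``as we may arrange'' assumes what it is meant to prove, although for compact \(M\) the two topologies do coincide by a short subsequence argument.
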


        \begin{proof}
            Fix a Riemannian metric \(g\) on \(M\).
            \Cref{thm: geodesicCubeDecomp} gives that \((M, \rho_g)\)
            is \cubedec.
            Consider the identity map \(\mathrm{Id}\) between
            \((M, \rho_g)\) and \((M, \rho)\).
            Since it is a \(C^1\) diffeomorphism, and
            \(\rho_g\) and \(\rho\) are Lipschitz-compatible,
            \cref{lem: biLipMap} gives that \(\mathrm{Id}\) is bi-Lipschitz.
            Hence, by \cref{lem: biLip}, \((M, \rho)\) is \cubedec.
        \end{proof}

        Since the restricted metric on an embedded manifold
        is Lipschitz-compatible (see \cref{prop: embeddedLipComp}),
        we immediately obtain the result in this case.

        \begin{corollary} \label{cor: embeddedCubeDecomp}
            Suppose that \(M\) is a compact connected \(C^1\)
            manifold \(C^1\)-embedded in Euclidean space.
            Then, \(M\) with the restricted Euclidean metric 
            is \cubedec.
        \end{corollary}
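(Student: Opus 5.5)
The statement is \cref{cor: embeddedCubeDecomp}, and I would prove it as a direct combination of two earlier results, \cref{prop: embeddedLipComp} and \cref{cor: lipCompCubeDecomp}.

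Let \(f \colon M \to \R^D\) be the \(C^1\) embedding. Via \(f\), identify \(M\) with its image, and take \(\rho_M(x,y) := |f(x) - f(y)|\) as the restricted Euclidean metric. This is a genuine metric on \(M\) because \(f\) is injective.

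The first step is to invoke \cref{prop: embeddedLipComp}. It says that \(\rho_M\) is Lipschitz-compatible: every point \(p\) has a chart \(U_p\) on which \(h\colon (U_p,\rho_M)\to (h(U_p),|\cdot|)\) is bi-Lipschitz. Together these charts cover \(M\), which is exactly \cref{def: lipComp}.

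The second step is to check the hypotheses of \cref{cor: lipCompCubeDecomp}. We need \(M\) to be a compact connected \(C^1\) manifold, which is given, and \(\rho_M\) to be Lipschitz-compatible, which is the first step. That corollary then yields that \((M,\rho_M)\) is \cubedec, which is the claim.

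The only point that needs care is a topological compatibility issue. \Cref{cor: lipCompCubeDecomp} goes through \cref{lem: biLipMap}, applied to the identity map \((M,\rho_g)\to(M,\rho_M)\). That lemma uses compactness and convergent subsequences with respect to both metrics, so \(\rho_M\) must induce the manifold topology of \(M\). This holds because an embedding is a homeomorphism onto its image. Alternatively, it follows directly from the local bi-Lipschitz charts, which show that \(\rho_M\)-balls and chart balls generate the same topology.

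Granting this compatibility, the argument is complete. I expect no real obstacle; the topology remark above is the only step I would spell out.
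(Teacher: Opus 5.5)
Your proposal is correct and is exactly the paper's argument: the paper obtains the corollary in one line by noting that the restricted metric is Lipschitz-compatible (\cref{prop: embeddedLipComp}) and then applying \cref{cor: lipCompCubeDecomp}. Your extra topological remark is sound. In fact it holds automatically here: for compact \(M\), any Lipschitz-compatible metric \(\rho\) induces the manifold topology, since the identity from \(M\) to \((M,\rho)\) is a continuous bijection from a compact space onto a Hausdorff one.
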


%% file: chapters/questions.tex
\section{Open questions and final remarks} \label{sec: questions}
    \subsection*{Weighted decomposition}
        We do not know whether a weighted decomposition
        (\cref{def: weightedDecomp})
        is possible for all Ahlfors--David regular
        metric measure spaces, as is the case for equal measures.
        Although our algorithm
        crucially depends on the additional property that
        dyadic cubes are connected, we conjecture that
        weighted decomposition is possible in the most general
        case.

    \medskip

    \subsection*{\Cubedec spaces}
        Another direction of research is to study
        \cubedec spaces in their own right.
        Even for manifolds, the simple bi-Lipschitz approach
        we developed is restricted to the compact case, or
        to the case of sufficient smoothness.
        A typical obstruction for a metric space
        to be \cubedec is a ``very nongeodesic''
        metric.

        However, there exist spaces (say, a circle without
        a point, embedded in the plane)
        that are not geodesic but are \cubedec.
        Laakso spaces \cite{Laa00} are doubling, geodesic, and not
        bi-Lipschitz equivalent to any subset of a Euclidean space,
        so our techniques cannot be applied to them.
        However, one may verify that they are \cubedec.
        We do not know whether doubling geodesic 
        metric spaces are \cubedec in general.
        We would also like to find more natural and complete
        necessary or sufficient criteria for a space (for example,
        a domain in \(\Rd\) with the Euclidean metric)
        to be \cubedec.

    \medskip

    \subsection*{Applications}
        Like equal-measure partitions, weighted decompositions
        may have applications in both pure and applied science.
        We came to this topic while studying certain 
        problems in calculus,
        and we hope that the results developed here will be used in
        a forthcoming paper.

    \medskip

    \subsection*{Acknowledgements}
        The author is grateful to his advisor Ioann Vasilyev, 
        who suggested this topic, reviewed this paper, and found some
        mistakes and inaccuracies.
        Without him this work would never even have 
        started.

    \subsection*{AI use in the paper}
        Large language models (Claude Sonnet 5 and Opus 5)
        were used, in part, to polish the language
        of this paper, find references, and check the drafted proofs.
        The general strategy of the research, and
        all mathematical statements and proofs,
        were developed and verified by the author.